\numberwithin{equation}{section}     
\def\accentsfrancais{applemac}
\newtheorem{thm}{Theorem}
\newtheorem{theorem}[thm]{Theorem}
\numberwithin{thm}{section}
\newtheorem{lemma}[thm]{Lemma}
\newtheorem{cor}[thm]{Corollary}
\newtheorem{remark}[thm]{Remark}
\def\ds{\displaystyle}
\def\emptyset{/\kern-.51em o}
\def\eq{\mathop{\vrule height2,6pt depth-2,3pt
         width -1pt\kern 0pt =}}
\let\norbali\normalbaselines
\def\anorbali{\norbali\advance\lineskip\jot
\advance\baselineskip\jot\advance\lineskiplimit\jot}
\def\ouvre{\let\normalbaselines\anorbali}
\def\D{{\mathbb{D}}}
\def\D{{\mathbb{D}}}
\def\R{{\mathbb{R}}}
\def\P{{\mathbb{P}}}
\def\N{\rm \hbox{I\kern-.2em\hbox{N}}}
\def\Z{\rm \hbox{Z\kern-.3em\hbox{Z}}}
\def\Ge{{\bf e}}
\def\Gr{{\bf r}}
\def\Gu{{\bf u}}
\def\Gv{{\bf v}}
\def\GE{{\bf E}}
\def\GF{{\bf F}}
\def\GG{{\bf G}}
\def\GI{{\bf I}}
\def\GQ{{\bf Q}}
\def\GR{{\bf R}}
\def\GF{{\bf F}}
\def\GX{{\bf X}}
\def\eps{\varepsilon}
\begin{document}
\title{Asymptotic behavior of a structure made by a  plate and a straight rod.}

\author{D. Blanchard$^{1}$, G. Griso$^{2}$}
\date{}
\maketitle
 
{\footnotesize
\begin{center}

$^{1}$  Universit\'e de Rouen, UMR 6085,  76801   Saint Etienne du Rouvray Cedex, France, \\ E-mail: dominique.blanchard@univ-rouen.fr

$^{2}$ Laboratoire J.-L. Lions--CNRS, Bo\^\i te courrier 187, Universit\'e  Pierre et
Marie Curie,\\ 4~place Jussieu, 75005 Paris, France, \; Email: griso@ann.jussieu.fr\\

\end{center} }

\begin{abstract}
 This paper  is devoted  to describe the asymptotic behavior of a structure made by a  thin plate and a thin rod in the framework of nonlinear elasticity. We scale the applied forces in such a way that the  level of the total elastic energy leads to the Von-K\'arm\'an's equations (or the linear model for smaller forces) in the plate and to a one dimensional rod-model at the limit. The junction conditions include in particular the continuity of the bending in the plate and the stretching in the rod at the junction. 
\end{abstract}
 
\smallskip
\noindent KEY WORDS: nonlinear elasticity,  junctions, straight rod, plate. 

\noindent Mathematics Subject Classification (2000): 74B20, 74K10, 74K30. 

\section{ Introduction} 
\medskip
In this paper we consider the junction problem between a plate and a rod as their thicknesses  tend to  zero. We denote by $\delta$ and $\eps$ the respective half thickness of the plate $\Omega_\delta$ and the rod $B_\eps$. The structure is clamped on a part of the lateral boundary of the plate and it is free on the rest of its boundary. We assume that this multi-structure is made of  elastic materials (possibly different in the plate and in the rod). In order to simplify the analysis we consider Saint-Venant-Kirchhoff's materials with Lam's coefficients of order 1 in the plate and of order $q_\eps^2=\eps^\eta$ in the rod with $\eta>-1$ (see (1.1)). It allows us to deal with a rod made of the same material as the plate, or made of a softer material ($\eta>0$) or of a stiffer material ($-1<\eta<0$). It is well known that the limit behaviors in both the two parts of this multi-structure depend on the order of the infimum  of the elastic energy with respect to the parameters $\delta$ and  $\eps$.  Indeed this order is governed by the ones of the applied forces on the structure.   In the present paper, we suppose  that the orders of the applied forces  depend on $\delta$ (for the plate) and $\eps$ (for the rod) and   via two new real parameters $\kappa$ and $\kappa^{'}$ (see Subsection 5.1). The parameters $\kappa$, $\kappa^{'}$ and $\eta$ are linked in such a way that the infimum of the total elastic energy be of order $\delta^{2\kappa-1}$. As far as a minimizing sequence $v_\delta$  of the energy is concerned, this    leads to the following estimates of the Green-St Venant's strain tensors
\begin{equation*}
 \big\|\nabla v^T_\delta\nabla v_\delta-\GI_3\big\|_{L^2(\Omega_{\delta} ; \R^{3\times 3})}\le  C\delta^{\kappa-1/2},\quad  \big\|\nabla v^T_\delta\nabla v_\delta-\GI_3\big\|_{L^2(B_{\eps} ; \R^{3\times 3})}\le C{\delta^{\kappa-1/2}\over q_\eps}.
 \end{equation*}  The limit model for the plate is the Von K\'arm\'an system ($\kappa=3$) or the classical linear plate model $(\kappa>3$). Similarly, in order to obtain either a nonlinear model or the classical linear model in the rod, the order of $\big\|\nabla v^T_\delta\nabla v_\delta-\GI_3\big\|_{L^2(B_{\eps} ; \R^{3\times 3})}$ must be less than $\eps^{\kappa^{'}}$  with $\kappa^{'}\ge 3$. Hence, $\delta$, $\eps$ and $q_\eps$ are linked by the relation
 $$\delta^{\kappa-1/2}= q_\eps\eps^{\kappa^{'}}.$$
Moreover, still for the above  estimates of the Green-St Venant's strain tensors, the bending in the plate is of order $\delta^{\kappa-2}$ and the stretching in the rod is of order $\eps^{\kappa^{'}-1}$. Since, we wish at least these two quantities to match at the junction it is essential to have
 $$\delta^{\kappa-2}= \eps^{\kappa^{'}-1}.$$ Finally, the two relations between the parameters  lead to 
\begin{equation}\label{DEQ}
\delta^3=q^2_\eps\eps^2=\eps^{2+2\eta}.
\end{equation} Under the relation \eqref{DEQ}, we prove that in the limit model, the rotation of the cross-section and the bending of the rod in the junction are null. The limit  plate model (nonlinear or linear) is coupled with the limit rod model (nonlinear or linear) via the bending in the plate and the stretching in the rod.

A similar problem, but starting within the framework of the linear elasticity is investigated in \cite {Murat}. In this work the rod is also clamped at its bottom. This additional boundary condition makes easier the analysis of the linear  system of elasticity. In \cite {Murat}, the authors also assume that
\begin{equation}\label{CondSup}
{\eps\over \delta^2}\longrightarrow +\infty.
\end{equation} With this extra condition they obtain the same linear limit model as we do here in the case $\kappa>3$ and $\kappa^{'}>3$ and they wonder if the condition \eqref{CondSup} is necessary or purely technical in order to obtain the junction conditions. The present article shows that this condition is not necessary to carry out the analysis.  

The derivation of the limit behavior of a multi-structure such as the one considered here rely on two main arguments. Firstly it is convenient to derive   ''Korn's type inequalities'' both in the plate and the rod.  Secondly one needs estimates of a deformation in the junction (in order  to obtain the limit junction conditions). In this paper this is achieved through  the use of   two main tools given  in Lemmas \ref{lemme2}  and \ref{lemAp}. For the plate, since it is clamped on a part of its lateral boundary, a 'Korn's type inequality'' is  given in \cite{BGJE}. For the rod the issue is more intricate because the rod is nowhere clamped. In a first step, we derive sharp estimates of a deformation $v$ in the junction with respect to the parameters and to the $L^2$ norm (over the whole structure) of the linearized strain tensor $\nabla v+(\nabla v)^T-2\GI_3$. This is the object of Lemma \ref{lemme2}. In a second step, in Lemma \ref{lemAp},  we  estimate the $L^2$ norm of  the linearized strain tensor of $v$ in the rod  with respect to the parameters and to the $L^2$ norms of $\hbox{dist}(\nabla v , SO(3))$ in the rod and in the plate. The proofs of these two lemmas strongly rely on the decomposition techniques for the displacements and the deformations of the plate and the rod. Once these technical results are established, we are in a position to scale the applied forces and in the case $\kappa=3$ or $\kappa^{'}=3$ to state an adequate assumption on these forces in order to finally obtain a total elastic energy of order less than $\delta^5$.

In Section 2 we introduce a few general notations. Section 3 is devoted to recall a main tool that we use in the whole paper, namely  the decomposition technique of the deformation of thin structures. In Section 4, the estimates provided by this method allow us  to derive sharp estimates on the bending and the cross-section rotation of the rod at the junction together  with the difference between the bending of the plate and the stretching of the rod at the junction. In Section 5 we introduce the elastic energy and we precise the scaling with respect to $\delta$ and $\kappa$  on the applied forces in order to obtain a total elastic energy of order $\delta^{2\kappa-1}$. In Section 6 we give the asymptotic behavior of the Green-St-Venant's strain tensors in the plate and in the rod. In Section 7 we characterize the limit of the sequence of the rescaled infimum of the elastic energy in terms of the minimum of a limit energy.

As general references on the theory of elasticity we refer to \cite{Ant1} and \cite{C1}. The reader is referred to \cite{Ant}, \cite{Trab}, \cite{GROD} for an introduction of rods models and to \cite{C11}, \cite{C2}, \cite{Ciarlet3}, \cite{FJM} for plate models. As far as junction problems in multi-structures we refer to \cite{CDN}, \cite{C2}, \cite{DJR}, \cite{DJP}, \cite{Ldret},  \cite{Auf}, \cite{Gru1}, \cite{Gru2}, \cite{GSP}, \cite{Murat}, \cite{BGG1}, \cite{BGG2}, \cite{BG1}, \cite{GSR}, \cite{BGNOTE}. For the decomposition method in thin structures we refer to \cite{GDecomp}, \cite{BGRod}, \cite{BGJE}, \cite{SimplCoq}.

\vskip 1mm
\section { Notations and definition of the structure.}
\vskip 1mm
Let us introduce a few  notations and definitions concerning the geometry of the plate and the rod. We denote  $I_d$  the identity map of $\R^3$. 
 
Let $\omega$ be a bounded domain in $\R^2$ with lipschitzian boundary included in the plane $(O; \Ge_1, \Ge_2)$  such that $O\in \omega$ and let $\delta>0$. 
The plate is the domain
 $$ \Omega_\delta=\omega\times ]-\delta,\delta[.$$
Let $\gamma_0$ be an open subset of $\partial \omega$ which is made of a finite number of connected components (whose closure are disjoint). The corresponding lateral part of the boundary of $\Omega_\delta$ is
$$\Gamma_{0,\delta}=\gamma_0\times]-\delta,\delta[.$$ The rod is defined by  
$$B_{\varepsilon,\delta}=D_\eps \times ]-\delta,L[,\qquad D_\eps=D(O,\varepsilon),\qquad D=D(O,1)$$ where $\varepsilon>0$ and where $D_r=D(O,r)$ is the disc of radius $r$ and center the origin $O$. The whole structure is denoted
 $${\cal S}_{\delta,\varepsilon}=\Omega_\delta\cup B_{\varepsilon,\delta}$$
 while the junction is 
 $$C_{\delta,\varepsilon}=\Omega_\delta\cap B_{\varepsilon,\delta}=D_\eps \times ]-\delta,\delta[.$$
   The set of admissible deformations of the plate is
$$\D_\delta=\Big\{ v\in H^1(\Omega_\delta ; \R^3)\;\;|\;\; v=I_d\enskip\hbox{on}\enskip\Gamma_{0,\delta}\Big\}.$$  The set of admissible deformations of the structure is
$$\D_{\delta,\varepsilon}=\Big\{ v\in H^1({\cal S}_{\delta,\varepsilon} ; \R^3)\;\;|\;\; v=I_d\enskip\hbox{on}\enskip\Gamma_{0,\delta}\Big\}.$$
The aim of this paper is to study the asymptotic behavior of the structure ${\cal S}_{\delta,\eps}$ in the case where the both paremeters $\delta$ and $\eps$ go to 0. In order to simplify this study, we link $\delta$ and $\eps$ by assuming that
\begin{equation}\label{Assump}
\hbox{there exists $\theta\in \R^*_+$ such that }\quad \delta=\eps^\theta
\end{equation} where $\theta$ is a fixed constant (see Subsection 5.1). Nevertheless,   we keep the parameters $\delta$ and $\eps$ in the estimates given in Sections 3 and 4.
\vskip 1mm
\section { Some recalls about the decompositions in the plates and the rods.}
\vskip 1mm
From now on, in order to simplify the notations, for any open set ${\cal O}\subset \R^3$ and any field $u\in H^1({\cal O} ; \R^3)$, we denote by
$$\GG_s(u,{\cal O})=||\nabla u+(\nabla u)^T||_{L^2({\cal O};\R^{3\times 3})}.$$

We recall  Theorem 4.3  established in \cite{GDecomp}. Any displacement $u\in   H^1(\Omega_\delta; \R^3)$ of the plate is decomposed as 
\begin{equation}\label{FDec}
u(x)={\cal U} (x_1,x_2)+ x_3{\cal R}(x_1,x_2) \land \Ge_3+\overline{u} (x),\qquad x\in \Omega_\delta
\end{equation}
where ${\cal U}$ and ${\cal R}$ belong to $ H^1(\omega; \R^3)$ and $\overline{u} $ belongs to $  H^1(\Omega_\delta; \R^3)$.  The sum of the two first terms $U_e(x)={\cal U} (x_1,x_2)+x_3{\cal R}(x_1,x_2)\land \Ge_3$ is called the elementary displacement associated to $u$.

\noindent  The following Theorem is proved in \cite{GDecomp}.
\begin{theorem}\label{Theorem 3.3.}
Let $u\in   H^1(\Omega_\delta; \R^3)$, there exists  an elementary displacement $U_e(x)={\cal U}(x_1,x_2)+x_3{\cal R}(x_1,x_2)\land \Ge_3$ and a warping  $\overline{u}$  satisfying \eqref{FDec} such that 
\begin{equation}\label{3.7}
\begin{aligned}
&||\overline{u} ||_{L^2(\Omega_\delta; \R^3)}\le C\delta \GG_s(u,\Omega_\delta),\quad||\nabla \overline{u} ||_{ L^2(\Omega_\delta; \R^3)}\le C \GG_s(u,\Omega_\delta)\\
&\Bigl\|{\partial {\cal R}\over \partial x_\alpha}\Big\|_{ L^2(\omega; \R^3)}\le {C\over \delta^{3/2}} \GG_s(u,\Omega_\delta)\\
& \Bigl\|{\partial{\cal U}\over \partial x_\alpha}-{\cal R}\land \Ge_\alpha\Big\|_{ L^2(\omega; \R^3)}\le {C\over \delta^{1/2}}\GG_s(u,\Omega_\delta)
\end{aligned}
\end{equation} where the constant $C$ does not depend on $\delta$.
\end{theorem}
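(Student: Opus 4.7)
The plan is to construct $\mathcal{U}$ and $\mathcal{R}$ explicitly as thickness-averages of $u$, and to derive the estimates by combining a Poincar\'e inequality in the thickness direction with a three-dimensional Korn inequality applied on cylinders of horizontal diameter of order $\delta$. Concretely, I would set
\[
\mathcal{U}(x_1,x_2) := \frac{1}{2\delta}\int_{-\delta}^{\delta} u(x_1,x_2,x_3)\,dx_3
\]
and define the two components of $\mathcal{R}$ orthogonal to $\Ge_3$ via
\[
\mathcal{R}(x_1,x_2)\land\Ge_3 := \frac{3}{2\delta^3}\int_{-\delta}^{\delta} x_3\,u(x_1,x_2,x_3)\,dx_3,
\]
the $\Ge_3$-component of $\mathcal{R}$ being irrelevant for $U_e$ and fixed arbitrarily. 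With this choice the warping $\overline{u} := u - U_e$ satisfies $\int_{-\delta}^{\delta}\overline{u}\,dx_3 = 0$ and $\int_{-\delta}^{\delta} x_3\,\overline{u}_\alpha\,dx_3 = 0$ for $\alpha=1,2$, so a one-dimensional Poincar\'e inequality on each vertical fibre reduces the bound $\|\overline{u}\|_{L^2(\Omega_\delta)}\le C\delta\,\GG_s(u,\Omega_\delta)$ to the gradient estimate $\|\nabla\overline{u}\|_{L^2(\Omega_\delta)}\le C\,\GG_s(u,\Omega_\delta)$.

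The heart of the argument is a covering of $\omega$ by subdomains $\{\omega_k\}$ of diameter of order $\delta$ with bounded overlap, on each of which I would apply the three-dimensional Korn inequality to the cylinder $\omega_k\times(-\delta,\delta)$. After rescaling by $\delta$ these cylinders have bounded aspect ratio, so the Korn constants are $\delta$-uniform; this provides on each cell a skew matrix $A_k$ and a translation $a_k$ with
\[
\|\nabla u - A_k\|_{L^2(\omega_k\times(-\delta,\delta))}\le C\,\GG_s(u,\omega_k\times(-\delta,\delta)).
\]
Defining the piecewise-constant field $A(x_1,x_2):=A_k$ on $\omega_k$ and using the cancellation $\int_{-\delta}^{\delta} x_3\,(A\Ge_\alpha)\,dx_3 = 0$, direct differentiation of the integral formula for $\mathcal{R}\land\Ge_3$ followed by Cauchy--Schwarz in $x_3$ gives
\[
\|\partial_\alpha(\mathcal{R}\land\Ge_3)\|_{L^2(\omega)}^2 \le \frac{C}{\delta^3}\,\|\nabla u - A\|_{L^2(\Omega_\delta)}^2 \le \frac{C}{\delta^3}\,\GG_s(u,\Omega_\delta)^2,
\]
which is the required $\delta^{-3/2}$ estimate. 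The analogous argument for $\partial_\alpha\mathcal{U} - \mathcal{R}\land\Ge_\alpha$, which measures the consistency of the elementary displacement with the local rigid motion $a_k + A_k x$, yields the $\delta^{-1/2}$ estimate, and the gradient bound $\|\nabla\overline{u}\|_{L^2(\Omega_\delta)}\le C\,\GG_s(u,\Omega_\delta)$ follows since $\mathcal{U}$ and $\mathcal{R}$ are close to $a_k$ and $A_k$ on each cell.

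The main difficulty is keeping the $\delta$-dependence sharp at every step. A single global three-dimensional Korn inequality applied directly to $\Omega_\delta$ has constants that blow up as $\delta\to 0$ and would miss the scalings $\delta^{-3/2}$ and $\delta^{-1/2}$. The remedy is the covering by $\delta$-scaled cylinders together with a careful combinatorial control of the discrepancies $A_k - A_{k'}$ on neighbouring cells, which is the core novelty of the decomposition method of \cite{GDecomp}.
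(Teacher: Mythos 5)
The paper does not prove this theorem; it is quoted verbatim from \cite{GDecomp} (Theorem~4.3 there), so your proposal must be judged against Griso's construction rather than against a proof in the text. Your overall architecture is indeed the right one and matches \cite{GDecomp}: moment definitions of $\mathcal{U}$ and of $\mathcal{R}\land\Ge_3$ over the thickness fibre so that the warping automatically satisfies the orthogonality relations \eqref{RelWarPlaque}, a Poincar\'e inequality in $x_3$ for the $L^2$-bound on $\overline{u}$, and a covering of $\omega$ by cells of diameter $\sim\delta$ with bounded overlap on which the Korn constant of the rescaled cylinder $\omega_k\times(-\delta,\delta)$ is $\delta$-uniform. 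The computation you sketch for $\partial_\alpha(\mathcal{R}\land\Ge_3)$ is correct: since $\int_{-\delta}^{\delta}x_3\,dx_3=0$ one may subtract the constant column $A_k\Ge_\alpha$ inside the first moment, and Cauchy--Schwarz in $x_3$ produces exactly the factor $\delta^{-3}$ that, after summing over the covering, gives the $\delta^{-3/2}$ scaling.

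The genuine gap is your claim that ``the $\Ge_3$-component of $\mathcal{R}$ [is] irrelevant for $U_e$ and fixed arbitrarily.'' It is absent from $U_e$, but it is not irrelevant to the estimates: $\mathcal{R}\land\Ge_1=(0,\mathcal{R}_3,-\mathcal{R}_2)$ and $\mathcal{R}\land\Ge_2=(-\mathcal{R}_3,0,\mathcal{R}_1)$, so the fourth inequality of \eqref{3.7} compares $\partial_\alpha\mathcal{U}_\beta$ to $\pm\mathcal{R}_3$ for $\alpha\neq\beta$. If you fix $\mathcal{R}_3\equiv 0$ and test on the in-plane rigid rotation $u(x)=(-x_2,x_1,0)$ you get $\GG_s(u,\Omega_\delta)=0$, $\mathcal{R}_1=\mathcal{R}_2=0$, yet $\partial_1\mathcal{U}-\mathcal{R}\land\Ge_1=(0,1,0)\neq 0$ -- so the estimate fails. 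Thus $\mathcal{R}_3$ must approximate the in-plane twist $(A_k)_{21}$ of the local skew matrix, and here is the real difficulty: for a plate the fibre is one-dimensional, so no moment in $x_3$ alone can detect this twist, unlike the rod case where $\mathcal{Q}_3$ comes from a genuine cross-sectional moment. One must instead build an $H^1(\omega)$ field $\mathcal{R}_3$ close to the piecewise-constant twists $(A_k)_{21}$ -- for instance by mollifying at scale $\delta$ and controlling the jumps $A_k-A_{k'}$ across overlapping cells via a chaining argument -- and prove $\|\partial_\alpha\mathcal{R}_3\|_{L^2(\omega)}\le C\delta^{-3/2}\GG_s(u,\Omega_\delta)$. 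This construction is the nontrivial core of the plate decomposition in \cite{GDecomp}; until it is carried out, the third and fourth bounds in \eqref{3.7}, and the gradient estimate on $\overline{u}$ that you deduce from them, are not established.
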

 The warping $\overline{u}$ satisfies the following relations
 \begin{equation}\label{RelWarPlaque}
 \begin{aligned}
& \int_{-\delta}^\delta\overline{u}(x_1,x_2,x_3)dx_3=0,\qquad  \int_{-\delta}^\delta x_3\overline{u}_\alpha(x_1,x_2,x_3)dx_3=0\quad \hbox{for a.e. } (x_1,x_2)\in \omega.
\end{aligned} \end{equation}
If a deformation  $v$ belongs to $\D_\delta$ then  the displacement $u=v-I_d$ is equal to $0$ on  $\Gamma_{0,\delta}$. In this case  the the fields ${\cal U}$, ${\cal R}$  and the warping $\overline{u}$ satisfy
\begin{equation}\label{CLUR}
{\cal U}= {\cal R}=0\qquad \hbox{on } \enskip \gamma_0,\qquad \overline{u}=0\qquad \hbox{on}\quad \Gamma_{0,\delta}.
\end{equation}
 Then,  from   \eqref{3.7}, for any deformation $v\in \D_\delta$   the corresponding displacement $u=v-I_d$ verifies the following estimates (see also \cite{GSP}):
\begin{equation}\label{Estm}
\begin{aligned}
||{\cal R}||_{H^1(\omega; \R^3)}+||{\cal U}_3||_{H^1(\omega)}\le {C\over \delta^{3/2}} \GG_s(u,\Omega_\delta),\\
 ||{\cal R}_3||_{L^2(\omega)}+||{\cal U}_\alpha||_{H^1(\omega)}\le {C\over \delta^{1/2}} \GG_s(u,\Omega_\delta).
\end{aligned} \end{equation} The constants depend only on $\omega$.

From the above estimates we deduce the following Korn's type inequalities for the displacement $u$
\begin{equation}\label{KoP0}
\begin{aligned}
&||u_\alpha||_{L^2(\Omega_\delta)}\le C\GG_s(u,\Omega_\delta),\quad ||u_3||_{L^2(\Omega_\delta)}\le {C\over \delta}\GG_s(u,\Omega_\delta),\\
& ||u-{\cal U}||_{L^2(\Omega_\delta ; \R^3)}\le {C\over \delta}\GG_s(u,\Omega_\delta),\\
& ||\nabla u||_{L^2(\Omega_\delta;\R^9)}\le {C\over \delta}\GG_s(u,\Omega_\delta).
 \end{aligned}
\end{equation}

Now, we consider a displacement  $u\in H^1(B_{\varepsilon,\delta} ; \R^3)$ of the rod $B_{\varepsilon,\delta}$. This displacement can be   decomposed as (see Theorem 3.1 of \cite{GDecomp})
\begin{equation}\label{DecR}
u(x)={\cal W} (x_3)+{\cal Q}(x_3)\land\big(x_1\Ge_1+x_2\Ge_2\big)+\overline{w}(x),\qquad x\in B_{\varepsilon,\delta},
 \end{equation}
where ${\cal W} $, ${\cal Q}$ belong to $H^1(-\delta,L;\R^3)$ and $\overline{w}$ belongs to $ H^1(B_{\varepsilon,\delta};\R^3)$. 
The sum of the  two first terms ${\cal W} (x_3)+{\cal Q}(x_3)\land\big(x_1\Ge_1+x_2\Ge_2\big)$ is called an elementary displacement of the rod.

The following Theorem is established in \cite{GDecomp} (see Theorem 3.1).
\smallskip
\begin{theorem}\label{Theorem II.2.2.}  Let $u\in H^1(B_{\varepsilon,\delta};\R^3)$, there exists an elementary displacement ${\cal W}(x_3) +{\cal Q}(x_3)\land \big(x_1\Ge_1 +x_2\Ge_2 \big)$ and a warping $\overline{w}$ satisfying \eqref{DecR} and such that 
\begin{equation}\label{EstmRod}
\begin{aligned}
&||\overline{w}||_{L^2(B_{\varepsilon,\delta};\R^3)}\le C\varepsilon\GG_s(u,B_{\eps,_\delta}),\quad||\nabla\overline{w}||_{L^2(B_{\varepsilon,\delta};\R^{3\times 3})}\le C \GG_s(u,B_{\eps,_\delta})\\
&\Bigl\|{d{\cal Q}\over dx_3}\Big\|_{L^2(-\delta, L ;\R^{3})}\le {C\over\varepsilon^2} \GG_s(u,B_{\eps,_\delta})\\
& \Bigl\|{d{\cal W}\over dx_3}-{\cal Q} \land\Ge_3\Big\|_{L^2(-\delta,L;\R^3)}\le {C\over \varepsilon}\GG_s(u,B_{\eps,_\delta})
\end{aligned}
\end{equation}
 where the constant $C$ does not depend on $\varepsilon$, $\delta$ and $L$.
 \end{theorem}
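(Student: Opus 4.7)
The plan is to construct ${\cal W}$, ${\cal Q}$ and $\overline{w}$ by suitable averaging of $u$ over each cross-section of the rod, and then to derive the four estimates in \eqref{EstmRod} by combining a 2D Korn inequality on the disc $D_\eps$ with a 3D Korn inequality on thin cylindrical slabs $D_\eps\times (a,a+\eps)\subset B_{\eps,\delta}$.

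I would first set ${\cal W}(x_3)=\frac{1}{\pi\eps^2}\int_{D_\eps}u(\cdot,x_3)\,dx_1dx_2$, which captures the mean translation of the cross-section at height $x_3$. Then I would define ${\cal Q}(x_3)$ as the axial vector of the skew-symmetric matrix that minimizes $\int_{D_\eps}|u-{\cal W}-A(x_1\Ge_1+x_2\Ge_2)|^2\,dx_1dx_2$ over antisymmetric $A$; this yields explicit formulas for ${\cal Q}_\alpha$ in terms of a first moment of $u_3$ and for ${\cal Q}_3$ in terms of the antisymmetric first moment of $(u_1,u_2)$. The warping $\overline{w}$ is then defined by \eqref{DecR}, and by optimality both the mean of $\overline{w}$ and its first cross-moments over each disc $D_\eps\times\{x_3\}$ vanish.

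The $L^2$ bounds on $\overline{w}$ and $\nabla\overline{w}$ follow from a 2D Korn inequality applied on the unit disc $D$ and then rescaled to $D_\eps$ by $(x_1,x_2)\mapsto\eps(y_1,y_2)$: the orthogonality relations allow one to apply Korn's inequality without an additive rigid-motion correction, and the scaling gives the extra factor $\eps$ on the $L^2$ norm of $\overline{w}$ and no weight on the gradient. Integrating in $x_3$ over $(-\delta,L)$ and using that the symmetric gradient of the elementary displacement vanishes with respect to $(x_1,x_2)$ produces the first line of \eqref{EstmRod}.

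The main obstacle is the estimates on $d{\cal Q}/dx_3$ and $d{\cal W}/dx_3 - {\cal Q}\land\Ge_3$. My strategy is to apply a 3D Korn inequality on each cylindrical slab $D_\eps\times (a,a+\eps)$: after subtracting a best-approximating rigid motion $V_a=W_a+Q_a\land x$ from $u$, a scaling argument on this $O(\eps)$-cube yields $\|u-V_a\|_{L^2(\text{slab})}\le C\eps\,\GG_s(u,\text{slab})$. Comparing $V_a$ with the elementary displacement on the slab translates this into finite-difference bounds on ${\cal Q}(a+\eps)-{\cal Q}(a)$ and on ${\cal W}(a+\eps)-{\cal W}(a)-\eps{\cal Q}(a)\land\Ge_3$, with weights $\eps^{-2}$ and $\eps^{-1}$ respectively coming from the scaling of the axial vector under cross-sectional integration. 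Summing the squared estimates over a covering of $(-\delta,L)$ by slabs of thickness $\eps$ and passing to the continuous derivative then yields the last two lines of \eqref{EstmRod}. Tracking the $\eps$-scaling precisely at each step is the technical heart of the argument.
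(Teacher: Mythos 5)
Your construction of ${\cal W}$, ${\cal Q}$, $\overline{w}$ (mean, first moments via optimal skew matrix, remainder) is exactly the one used in the cited reference \cite{GDecomp}, and the key tool you invoke for the last two lines of \eqref{EstmRod} — a rescaled 3D Korn inequality on slabs $D_\eps\times(a,a+\eps)$, comparison of the elementary displacement with the best rigid motion $V_a=W_a+Q_a\land x$, summation over a bounded-overlap covering of $]-\delta,L[$ — is the correct one. So the overall route is right.

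There are, however, two points that do not hold as stated. First, the claim that the full gradient bound $\|\nabla\overline{w}\|_{L^2}\le C\,\GG_s(u,B_{\eps,\delta})$ follows from a 2D Korn inequality on the cross-sections is incorrect: a cross-sectional Korn inequality only controls $\partial_\alpha\overline{w}_\beta$ for $\alpha,\beta\in\{1,2\}$. The axial derivatives $\partial_3\overline{w}_j$ contain $d{\cal W}/dx_3$ and $(d{\cal Q}/dx_3)\land(x_1\Ge_1+x_2\Ge_2)$, and the mixed components $\partial_\alpha\overline{w}_3$ involve the constants ${\cal Q}_\alpha(x_3)$ through $\partial_\alpha u_3=2e_{\alpha 3}(u)-\partial_3 u_\alpha$; none of these are visible to a 2D Korn on $D_\eps\times\{x_3\}$. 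In fact the same slab argument you set up for the last two lines is what gives $\nabla\overline{w}$: write $\nabla\overline{w}=\nabla(u-V_a)-\nabla(U_e-V_a)$ on each slab, bound $\nabla(u-V_a)$ by the scaled Korn, and bound $\nabla(U_e-V_a)$ using the freshly obtained bounds on $\|{\cal Q}-Q_a\|_{L^2(a,a+\eps)}$, $\|d{\cal W}/dx_3-Q_a\land\Ge_3\|_{L^2(a,a+\eps)}$ and $\|d{\cal Q}/dx_3\|_{L^2(a,a+\eps)}$. Similarly the $L^2$ bound on $\overline{w}$ falls out of the slab Korn directly — on each cross-section $\overline{w}$ is the orthogonal projection of $u-V_a$ onto the warping space, hence $\|\overline{w}\|_{L^2(\mathrm{slab})}\le\|u-V_a\|_{L^2(\mathrm{slab})}\le C\eps\,\GG_s(u,\mathrm{slab})$ — so the 2D Korn detour is unnecessary (and for the component $\overline{w}_3$ a 2D Korn is not even the right tool).

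Second, the passage ``finite-difference bounds on ${\cal Q}(a+\eps)-{\cal Q}(a)$ $\ldots$ and passing to the continuous derivative'' is imprecise and, taken literally, loses a factor: pointwise values of ${\cal Q}$ are only traces and cost an extra $\eps^{-1/2}$. The clean argument is $L^2$-based: on each slab, use the moment formulas to write ${\cal Q}(x_3)-Q_a=\frac{4}{\pi\eps^4}\int_{D_\eps}(\cdot)(u-V_a)$ and $\frac{d{\cal W}}{dx_3}(x_3)-Q_a\land\Ge_3=\frac{1}{\pi\eps^2}\int_{D_\eps}\partial_3(u-V_a)$, then apply Cauchy--Schwarz on the disc and integrate in $x_3$ over $(a,a+\eps)$ to obtain $\eps^{-1}\GG_s(u,\mathrm{slab})$ bounds for both; $\|d{\cal Q}/dx_3\|$ is then obtained from the scaled Poincar\'e on the interval of length $\eps$ (giving the extra $\eps^{-1}$), or, more directly, by differentiating the moment formulas under the integral sign. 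Summing these squared $L^2$ bounds over the covering gives the stated global estimates without any discrete-to-continuous step. With these two corrections your argument is sound and matches the method of the cited Theorem 3.1 in \cite{GDecomp}.
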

 The warping $\overline{w}$ satisfies the following relations
 \begin{equation}\label{RelWarPoutre}
 \begin{aligned}
& \int_{D_\eps}\overline{w}(x_1,x_2,x_3)dx_1dx_2=0,\qquad  \int_{D_\eps}x_\alpha\overline{w}_3(x_1,x_2,x_3)dx_1dx_2=0,\\
& \int_{D_\eps}\big\{x_1\overline{w}_2(x_1,x_2,x_3)-x_2\overline{w}_1(x_1,x_2,x_3)\big\}dx_1dx_2=0\quad \hbox{for a.e. } x_3\in ]-\delta,L[.
\end{aligned} \end{equation}
Then, from   \eqref{EstmRod}, for any displacement $u\in H^1(B_{\eps,\delta};\R^3)$  the terms of the decomposition of $u$ verify
\begin{equation}\label{EstmR}
\begin{aligned}
||{\cal Q}-{\cal Q}(0)||_{H^1(-\delta,L;\R^3)}\le {C\over \eps^2} \GG_s(u,B_{\eps,_\delta}),\\
||{\cal W}_3-{\cal W}_3(0)||_{H^1(-\delta,L)}\le {C\over \eps} \GG_s(u,B_{\eps,_\delta}),\\
||{\cal W}_\alpha-{\cal W}_\alpha(0)||_{H^1(-\delta,L)}\le {C\over \eps^2} \GG_s(u,B_{\eps,_\delta})+C\eps||{\cal Q}(0)||_2.
\end{aligned} \end{equation}
Now, in order to obtain  Korn's type inequalities for the displacement $w$, the following section is devoted to  give estimates on ${\cal Q}(0)$ and ${\cal W}(0)$.

\section{Estimates at the junction.}
\vskip 1mm

Let us set 
$$H^1_{\gamma_0}(\omega)=\{\varphi\in H^1(\omega);\; \varphi=0\; \hbox  { on } \gamma_0\}.$$
Let $v\in \D_{\delta,\eps}$ be a deformation whose displacement $u=v-I_d$ is decomposed as in Theorem \ref{Theorem 3.3.} and Theorem \ref{Theorem II.2.2.}.
We define the function $\widetilde{\cal U}_3$ as the solution of the following variational problem
\begin{equation}\label{defVtilde}
\left\{\begin{aligned}
& \widetilde{\cal U}_3\in H^1_{\gamma_0}(\omega),\\
&\int_\omega\nabla\widetilde{\cal U}_3\nabla\varphi=\int_\omega\big({\cal R}\land\Ge_\alpha\big)\cdot\Ge_3 {\partial\varphi\over \partial x_\alpha},\\
&\forall\varphi\in H^1_{\gamma_0}(\omega).
\end{aligned}\right.
\end{equation}
Indeed $\widetilde{\cal U}_3$  satisfies due to the third estimate in \eqref{Estm}
\begin{equation}\label{estVtilde}
||\widetilde{\cal U}_3||_{H^1(\omega)}\le {C\over \delta^{3/2}}\GG_s(u,\Omega_\delta)
\end{equation}
The definition \eqref{defVtilde} of $\widetilde{\cal U}_3$ together with the fourth estimate in \eqref{3.7} lead to 
\begin{equation}\label{V-Vtilde}
||{\cal U}_3-\widetilde{\cal U}_3||_{H^1(\omega)}\le {C\over \delta^{1/2}}\GG_s(u,\Omega_\delta)
\end{equation} and moreover
\begin{equation}\label{NablaV-R}
\begin{aligned}
&\Big\|{\partial\widetilde{\cal U}_3\over \partial x_\alpha}-({\cal R}\land \Ge_\alpha)\cdot\Ge_3\Big\|_{L^2(\omega)}\le {C\over \delta^{1/2}}\GG_s(u,\Omega_\delta).
\end{aligned}\end{equation}
Now, let $\rho_0>0$ be fixed such that $D(O,\rho_0)\subset\subset  \omega$. Since ${\cal R}\in H^1(\omega ; \R^3)$, the function $\widetilde{\cal U}_3$  belongs to $H^2\big(D(O,\rho_0)\big)$ and the third estimate in \eqref{Estm} gives
\begin{equation}\label{estVtildeH2}
||\widetilde{\cal U}_3||_{H^2(D(O,\rho_0))}\le {C\over \delta^{3/2}}\GG_s(u,\Omega_\delta).
\end{equation} Hence $\widetilde{\cal U}_3$ belongs to ${\cal C}^0(\overline{D(O,\rho_0)})$.

\begin{lemma}\label{lemme2} We have the following estimates on ${\cal W}(0)$:
\begin{equation}\label{W1(0)}
|{\cal W}_\alpha(0)|^2\le {C\over\eps\delta}\big[\GG_s(u,\Omega_\delta)\big]^2+ C\Big[1+{\delta^2\over\eps^2}\Big]{\delta\over \eps^2}\big[\GG_s(u,B_{\eps,\delta})\big]^2
\end{equation}
and
\begin{equation}\label{W3(0)-V3(0,0)}
|{\cal W}_3(0)-\widetilde {\cal U}_3(0,0)|^2\le {C\over\delta^2}\Big[1+{\eps^2\over \delta}\Big]\big[\GG_s(u,\Omega_\delta)\big]^2+C{\delta\over \eps^2 }\big[\GG_s(u,B_{\eps,\delta})\big]^2.
\end{equation}
The vector  ${\cal Q}(0)$ satisfies the following estimate:
\begin{equation}\label{Q(0)First}
||{\cal Q}(0)||^2_2\le {C\over\eps^2\delta}\Big[1+{\eps\over \delta^2}\Big]\big[\GG_s(u,\Omega_\delta)\big]^2+C{\delta\over\eps^4}\big[\GG_s(u,B_{\eps,\delta})\big]^2.
\end{equation}
The constants $C$ are independent of $\eps$ and $\delta$.
\end{lemma}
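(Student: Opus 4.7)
The strategy is to move information from the plate (clamped on $\Gamma_{0,\delta}$) to the rod through the junction $C_{\delta,\eps}=D_\eps\times]-\delta,\delta[$, using the fact that the same displacement $u=v-I_d$ carries \emph{both} the plate decomposition of Theorem~\ref{Theorem 3.3.} and the rod decomposition of Theorem~\ref{Theorem II.2.2.} there. Equating the two representations on $C_{\delta,\eps}$ and integrating the resulting identity against the weights $1$, $x_1$, $x_2$ (and $x_1u_2-x_2u_1$ for the ${\cal Q}_3$-component), the warping orthogonalities \eqref{RelWarPlaque}--\eqref{RelWarPoutre} combined with the odd-function symmetry of $D_\eps$ wipe out every warping contribution and produce scalar identities of the type
\begin{equation*}
\frac{1}{2\delta}\int_{-\delta}^{\delta}{\cal W}(x_3)\,dx_3 \;=\; \frac{1}{\pi\eps^2}\int_{D_\eps}{\cal U}(x_1,x_2)\,dx_1dx_2,
\end{equation*}
plus three analogous ones expressing the $]-\delta,\delta[$-mean of each ${\cal Q}_i$ as a first planar moment of ${\cal U}$ or $\widetilde{\cal U}_3$ on $D_\eps$.

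From each such identity, pass from the mean to the value at $0$ via the elementary $|f(0)-\frac{1}{2\delta}\int_{-\delta}^{\delta}f|^2\le\frac{\delta}{2}\|f'\|^2_{L^2(-\delta,\delta)}$, controlling the derivatives through \eqref{EstmRod}: $\|d{\cal Q}/dx_3\|_{L^2}\le C\eps^{-2}\GG_s(u,B_{\eps,\delta})$ and $d{\cal W}/dx_3={\cal Q}\wedge\Ge_3+O(\eps^{-1}\GG_s(u,B_{\eps,\delta}))$. The plate-side right-hand sides are estimated by two devices. For moments of the in-plane components ${\cal U}_\alpha$, the 2D Sobolev embedding $H^1(\omega)\hookrightarrow L^p(\omega)$ (any $p<\infty$) yields $\|{\cal U}_\alpha\|^2_{L^2(D_\eps)}\le C\eps\|{\cal U}_\alpha\|^2_{H^1(\omega)}$, which combined with \eqref{Estm} produces the coefficient $1/(\eps\delta)$ of \eqref{W1(0)}. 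For moments of ${\cal U}_3$, split ${\cal U}_3=\widetilde{\cal U}_3+({\cal U}_3-\widetilde{\cal U}_3)$: the error is controlled by \eqref{V-Vtilde}, whereas $\widetilde{\cal U}_3\in H^2(D(O,\rho_0))$ from \eqref{estVtildeH2} is continuous at the origin with $\nabla\widetilde{\cal U}_3\simeq({\cal R}\wedge\Ge_\alpha)\cdot\Ge_3$ modulo the $\delta^{-1/2}$-error of \eqref{NablaV-R}. A Taylor-type expansion of $\widetilde{\cal U}_3$ at $(0,0)$ then delivers the remaining moments and, at the same stroke, identifies quantitatively the in-plane rotation ${\cal R}$ of the plate with the cross-section rotation ${\cal Q}$ of the rod at the junction.

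The order of proof is \eqref{Q(0)First} first, then \eqref{W1(0)}, then \eqref{W3(0)-V3(0,0)}. Writing $\|{\cal Q}\|^2_{L^2(-\delta,\delta)}\le C\delta^2\eps^{-4}\GG_s(u,B_{\eps,\delta})^2+2\delta|{\cal Q}(0)|^2$ and substituting the already-proved \eqref{Q(0)First} into the mean-to-point estimate for ${\cal W}_\alpha$ is what produces the factor $[1+\delta^2/\eps^2]$ in \eqref{W1(0)}; \eqref{W3(0)-V3(0,0)} needs no such bootstrap since $({\cal Q}\wedge\Ge_3)\cdot\Ge_3=0$, and one simply splits ${\cal W}_3(0)-\widetilde{\cal U}_3(0,0)$ into a mean-to-point error, a junction-identity error controlled by \eqref{V-Vtilde}, and an $H^2$ mean-value error on $D_\eps$. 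The main obstacle is the sharp $(\eps,\delta)$-bookkeeping: naive combinations of the estimates above produce strictly weaker exponents, and the claimed sharpness is attained only by (i)~using Sobolev embedding on the small disc $D_\eps$ rather than plain $L^2(D_\eps)\subset L^2(\omega)$, (ii)~systematically substituting $\widetilde{\cal U}_3$ for ${\cal U}_3$ so that $H^2$ regularity near the origin can be exploited, and (iii)~carrying out the ${\cal Q}(0)$ estimate strictly before feeding it back into the ${\cal W}_\alpha(0)$ one.
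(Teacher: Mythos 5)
Your overall strategy matches the paper: identify the two decompositions on the junction cylinder $C_{\delta,\eps}$, exploit the warping orthogonality relations \eqref{RelWarPlaque}--\eqref{RelWarPoutre} to equate cross-sectional averages and first moments, replace ${\cal U}_3$ by $\widetilde{\cal U}_3$ in order to have $H^2$ control near the origin (the function $\Psi$ in the paper is exactly your Taylor remainder), and use the 2D Sobolev embedding $H^1(\omega)\hookrightarrow L^p(\omega)$ restricted to $D_\eps$ to get the sharp plate-side coefficients. The starting identity you write down is precisely \eqref{V-WFirst}, and the moment identities for ${\cal Q}$ are \eqref{S40} and the lines preceding it.

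There is, however, one genuine error in your bookkeeping, and it concerns precisely the point you flag as the main obstacle. You assert that the factor $\bigl[1+\delta^2/\eps^2\bigr]$ in \eqref{W1(0)} is produced by bootstrapping \eqref{Q(0)First} into the mean-to-point estimate for ${\cal W}_\alpha$, and that the ${\cal Q}(0)$ estimate therefore must be proved first. This is not what happens, and the bootstrap route actually loses sharpness. If you write
\begin{equation*}
\Bigl\|\frac{d{\cal W}_\alpha}{dx_3}\Bigr\|_{L^2(-\delta,\delta)}\le \frac{C}{\eps}\,\GG_s(u,B_{\eps,\delta})+\|{\cal Q}\|_{L^2(-\delta,\delta)},\qquad \|{\cal Q}\|^2_{L^2(-\delta,\delta)}\le C\frac{\delta^2}{\eps^4}\,\GG_s(u,B_{\eps,\delta})^2+2\delta\,\|{\cal Q}(0)\|^2_2,
\end{equation*}
and then insert \eqref{Q(0)First}, the term $C\delta^2\|{\cal Q}(0)\|^2_2$ contributes an extra plate-side piece of size $C\,\frac{\delta}{\eps^2}\,[\GG_s(u,\Omega_\delta)]^2$ on top of the $C/(\eps\delta)$ coefficient claimed in \eqref{W1(0)}. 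Since $\frac{\delta}{\eps^2}\le \frac{C}{\eps\delta}$ only when $\delta^2\lesssim\eps$, your version is strictly weaker than the lemma whenever $\theta<1/2$ (equivalently $\eta<-1/4$), which is allowed here. The paper sidesteps the issue entirely, with no bootstrap: it introduces $y(x_3)={\cal W}(x_3)-{\cal Q}(0)\,x_3\wedge\Ge_3$, whose derivative is controlled by $\|d{\cal W}/dx_3-{\cal Q}\wedge\Ge_3\|_{L^2}$ and $\|{\cal Q}-{\cal Q}(0)\|_{L^2}$ alone (no $\|{\cal Q}(0)\|$), and observes that the subtracted rigid piece vanishes at $x_3=0$ and has zero mean over $(-\delta,\delta)$, so ${\cal M}_{I_\delta}({\cal W}_\alpha)-{\cal W}_\alpha(0)={\cal M}_{I_\delta}(y_\alpha)-y_\alpha(0)$. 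The factor $[1+\delta^2/\eps^2]$ then comes from squaring $\frac{1}{\eps}+\frac{\delta}{\eps^2}$, not from any bound on ${\cal Q}(0)$. So the correct logical order is the paper's: \eqref{W1(0)} and \eqref{W3(0)-V3(0,0)} are proved first and independently of ${\cal Q}(0)$, and \eqref{Q(0)First} comes afterward.

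One smaller point worth flagging: for \eqref{W3(0)-V3(0,0)} the exponent $p$ in your Sobolev step does not cancel automatically; the intermediate estimate carries a factor $C_p/\eps^{4/p}$, and one must take $p=\max(2,4/\theta)$, using the standing relation $\delta=\eps^\theta$, to convert it to the stated form. Your proposal mentions using arbitrary $p<\infty$ but does not record this $\theta$-dependent choice, which is the only place in Lemma~\ref{lemme2} where the link \eqref{Assump} between $\delta$ and $\eps$ actually enters.
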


\begin{proof} The two decompositions of $u=v-I_d$ give, for a.e. $x$ in the common part of the plate and the rod $C_{\delta,\eps}$
\begin{equation}\label{DecCyl}
\begin{aligned}
{\cal U} (x_1,x_2)+x_3{\cal R}(x_1,x_2)\land\Ge_3+\overline{u}(x)={\cal W} (x_3)+{\cal Q}(x_3)\land(x_1\Ge_1+x_2\Ge_2)+\overline{w}(x).
\end{aligned}\end{equation} 
{\it Step 1. Estimates on ${\cal W}(0)$.}

\noindent In this step we prove \eqref{W1(0)} and \eqref{W3(0)-V3(0,0)}.
Taking into account the equalities \eqref{RelWarPlaque} and \eqref{RelWarPoutre} on the warpings $\overline{u}$ and $\overline{w}$, we deduce that the averages on the cylinder $C_{\delta,\eps}$ of the both sides of the above equality \eqref{DecCyl} give
\begin{equation}\label{V-WFirst}
{\cal M}_{D_{\eps}}\big({\cal U}\big)={\cal M}_{I_{\delta}}\big({\cal W}\big)
\end{equation} where $\ds {\cal M}_{D_{\eps}}\big({\cal U}\big)={1\over |D_\eps|}\int_{D_\eps}{\cal U}(x_1,x_2)dx_1dx_2$ and
$\ds {\cal M}_{I_{\delta}}\big({\cal W}\big)={1\over 2\delta}\int_{-\delta}^\delta{\cal W}(x_3)dx_3$. 

\noindent Besides using \eqref{Estm} we have
\begin{equation*}
||{\cal U}_\alpha||^2_{L^2(D_\eps )} \le C\eps||{\cal U}_\alpha||^2_{L^4(\omega)}\le C\eps ||{\cal U}_\alpha||^2_{H^1(\omega)}\le{C\eps\over \delta}\big[\GG_s(u,\Omega_\delta)\big]^2.
\end{equation*} From these estimates we get
\begin{equation}\label{Walpha}
|{\cal M}_{I_{\delta}}\big({\cal W}_\alpha\big)|^2=|{\cal M}_{D_{\eps}}\big({\cal U}_\alpha\big)|^2\le {C\over\eps \delta}\big[\GG_s(u,\Omega_\delta)\big]^2.
\end{equation} Moreover, for any $p\in [2,+\infty[$ using \eqref{V-Vtilde} we deduce that
\begin{equation}\label{DefU3}
\begin{aligned}
||{\cal U}_3-\widetilde{\cal U}_3||_{L^2(D_\eps)} & \le C\eps^{1-2/p}||{\cal U}_3-\widetilde{\cal U}_3||_{L^p(\omega)}\\
& \le C_p\eps^{1-2/p} ||{\cal U}_3-\widetilde{\cal U}_3||_{H^1(\omega)}\le C_p{\eps^{1-2/p}\over \delta^{1/2}}\GG_s(u,\Omega_\delta).
\end{aligned}\end{equation} 
Then we replace ${\cal U}_3$ with $\widetilde{\cal U}_3$ in \eqref{V-WFirst} to obtain
\begin{equation}\label{V-WSec}
|{\cal M}_{D_{\eps}}\big(\widetilde{\cal U}_3\big)-{\cal M}_{I_{\delta}}\big({\cal W}_3\big)|^2\le{C_p\over \eps^{4/p}\delta}\big[\GG_s(u,\Omega_\delta)\big]^2.
\end{equation}
We carry on by comparing ${\cal M}_{D_{\eps}}\big(\widetilde{\cal U}_3\big)$ with $\widetilde{\cal U}_3(0,0)$.  Let us set
\begin{equation}\label{Ralpha}
{\Gr}_\alpha={\cal M}_{D_\eps}\big({\cal R}\land\Ge_\alpha\big)\cdot\Ge_3\big)={1\over |D_\eps|}\int_{D_\eps }\big({\cal R}(x_1,x_2)\land\Ge_\alpha\big)\cdot\Ge_3dx_1dx_2
\end{equation} and consider the function $\Psi(x_1,x_2)=\widetilde{\cal U}_3(x_1,x_2)-{\cal M}_{D_{\eps}}\big(\widetilde{\cal U}_3\big)-x_1\Gr_2-x_2\Gr_1$. Due to the  estimate \eqref{estVtildeH2} we first obtain
\begin{equation}\label{Psi2}
\Big\|{\partial^2\Psi\over \partial x_\alpha\partial x_\beta}\Big\|_{L^2(D_\eps)}\le {C\over \delta^{3/2}}\GG_s(u,\Omega_\delta).
\end{equation} Secondly, from \eqref{3.7} and the Poincar-Wirtinger's inequality in the disc $D_\eps $ we get
\begin{equation*}
||({\cal R}\land\Ge_\alpha)\cdot\Ge_3-{\cal M}_{D_\eps}\big(({\cal R}\land\Ge_\alpha)\cdot\Ge_3\big)||_{L^2(D_\eps)}\le C{\eps\over \delta^{3/2}}\GG_s(u,\Omega_\delta).
\end{equation*} Using the above inequality and \eqref{NablaV-R} we deduce that
\begin{equation}\label{NablaPsi}
||\nabla\Psi||^2_{L^2(D_\eps ; \R^2)}\le C \Big({1\over \delta}+{\eps^2\over \delta^3}\Big)\big[\GG_s(u,\Omega_\delta)\big]^2,\\
\end{equation} Noting that ${\cal M}_{D_\eps}(\Psi)=0$,  the above inequality and the Poincar-Wirtinger's inequality in the disc $D_\eps $ and   lead to
\begin{equation}\label{Psi}
||\Psi||^2_{L^2(D_\eps)}\le C{\eps^2\over\delta}\Big(1+{\eps^2\over \delta^2}\Big)\big[\GG_s(u,\Omega_\delta)\big]^2.
\end{equation} From inequalities  \eqref{Psi2}, \eqref{NablaPsi} and \eqref{Psi} we deduce that
\begin{equation*}
||\Psi||^2_{{\cal C}^0(\overline{D_\eps})}\le C  \Big({1\over \delta}+{\eps^2\over \delta^3}\Big)\big[\GG_s(u,\Omega_\delta)\big]^2
\end{equation*} which in turn gives
\begin{equation*}
|\Psi(0,0)|^2= |\widetilde{\cal U}_3(0,0)-{\cal M}_{D_{\eps}}\big(\widetilde{\cal U}_3\big)|^2\le C \Big({1\over \delta}+{\eps^2\over \delta^3}\Big)\big[\GG_s(u,\Omega_\delta)\big]^2.
\end{equation*} This last estimate and \eqref{V-WSec} yield 
\begin{equation}\label{V(0)-WSec}
| \widetilde{\cal U}_3(0,0)-{\cal M}_{I_{\delta}}\big({\cal W}_3\big)|^2\le{C\over \delta}\Big({C_p\over\eps^{4/p}}+{\eps^2\over \delta^2}\Big)\big[\GG_s(u,\Omega_\delta)\big]^2.
\end{equation} In order to estimate ${\cal M}_{I_{\delta}}\big({\cal W}_3\big)-{\cal W}_3(0)$, we set $\ds y(x_3)={\cal W}(x_3)-{\cal Q}(0)x_3\land\Ge_3$. Estimates in Theorem   \ref{Theorem II.2.2.} together with the use of Poincar inequality in order to estimate $||{\cal Q}-{\cal Q}(0)||_{L^2(-\delta,\delta ; \R^3)}$ give
\begin{equation*}
\begin{aligned}
&\Bigl\|{dy_\alpha \over dx_3}\Big\|_{L^2(-\delta, \delta)}\le C\Big({1\over \eps}+{\delta\over \eps^2}\Big)\GG_s(u,B_{\eps,\delta}),\\
&\Bigl\|{dy_3 \over dx_3}\Big\|_{L^2(-\delta, \delta)}\le {C\over \eps}\GG_s(u,B_{\eps,\delta}).
\end{aligned}\end{equation*}  which imply
\begin{equation*}
\begin{aligned}
&\bigl\|y_\alpha-y_\alpha(0)\big\|^2_{L^2(-\delta,\delta)}\le C{\delta^2\over \eps^2}\Big(1+{\delta^2\over \eps^2}\Big)\big[\GG_s(u,B_{\eps,\delta})\big]^2,\\
&\bigl\|y_3-y_3(0)\big\|^2_{L^2(-\delta,\delta)}\le C{\delta^2\over \eps^2}\big[\GG_s(u,B_{\eps,\delta})\big]^2.
\end{aligned}\end{equation*} Then, taking the averages on $]-\delta, \delta[$ we obtain
\begin{equation}\label{y-y(0)}
\begin{aligned}
&|{\cal M}_{I_{\delta}}\big({\cal W}_\alpha\big)-{\cal W}_\alpha(0)|^2
\le  C\Big(1+{\delta^2\over \eps^2}\Big){\delta\over \eps^2}\big[\GG_s(u,B_{\eps,\delta})\big]^2,\\
&|{\cal M}_{I_{\delta}}\big({\cal W}_3\big)-{\cal W}_3(0)|^2
\le  C{\delta\over \eps^2}\big[\GG_s(u,B_{\eps,\delta})\big]^2.
\end{aligned}\end{equation}
Finally, from \eqref{Walpha},  \eqref{V(0)-WSec} and  the above last inequality,  we obtain \eqref{W1(0)} and the following estimate:
\begin{equation}\label{W3(0)-V3(0,0)p}
|{\cal W}_3(0)-\widetilde {\cal U}_3(0,0)|^2\le {C\over\delta}\Big[{C_p\over\eps^{4/p}}+{\eps^2\over \delta^2}\Big]\big[\GG_s(u,\Omega_\delta)\big]^2)+C{\delta\over \eps^2 }\big[\GG_s(u,B_{\eps,\delta})\big]^2.
\end{equation} Choosing $p=\max(2, 4 / \theta)$ (recall that $\delta=\eps^\theta$) we get \eqref{W3(0)-V3(0,0)}.
\vskip 1mm
\noindent{\it Step 2. We prove the estimate on ${\cal Q}(0)$.} 
 We recall (see Definition 3 in \cite{GDecomp}) that the field ${\cal Q}$ is defined by 
\begin{equation*}
\begin{aligned}
& {\cal Q}_1(x_3)={4\over \pi\eps^4}\int_{D_\eps} x_1u_3(x) dx_1dx_2, \qquad {\cal Q}_2(x_3)=-{4\over \pi\eps^4}\int_{D_\eps}x_2u_3(x)dx_1dx_2,\\
& {\cal Q}_3(x_3)={2\over \pi\eps^4}\int_{D_\eps}\big\{x_1u_2(x)-x_2u_1(x)\big\}dx_1dx_2,\qquad \hbox{for a.e. } x_3\in ]-\delta,L[.
\end{aligned}\end{equation*}
Now, again using the equalities \eqref{RelWarPlaque} and \eqref{RelWarPoutre} on the warpings $\overline{u}$ and $\overline{w}$,  the two decompositions \eqref{DecCyl} of $u$ in the cylinder $C_{\delta,\eps}$ lead to
\begin{equation*}
\Big|{\eps^2\over 4}{\cal M}_{I_\delta}({\cal Q}_\alpha)\Big|=\Big|{\cal M}_{D_\eps}\big({\cal U}_3\,x_\alpha\big)\Big|,\qquad \Big|{\eps^2\over 2}{\cal M}_{I_\delta}({\cal Q}_3)\Big|=\Big|{\cal M}_{D_\eps}\big({\cal U}_2\,x_1-{\cal U}_1\,x_2\big)\Big|.
\end{equation*} Noticing that ${\cal M}_{D_\eps}\big({\cal U}_1\,x_2\big)={\cal M}_{D_\eps}\big([{\cal U}_1-{\cal M}_{D_\eps}({\cal U}_1)]x_2\big)$ and applying the Poincar-Wirtinger's inequality with \eqref{Estm} yield
\begin{equation}\label{S40}
\big|{\cal M}_{I_\delta}({\cal Q}_3)\big|^2\le {C\over \eps^2\delta}[\GG_s(u,\Omega_\delta)]^2.
\end{equation} From the definition of the function $\Psi$ and the constants $\Gr_\alpha$ introduced in Step 1 we deduce that
\begin{equation}\label{S400}
\big|{\cal M}_{D_\eps}\big({\cal U}_3x_\alpha\big)\big|\le \big|{\cal M}_{D_\eps}\big(\Psi x_\alpha\big)\big|+\big|{\cal M}_{D_\eps}\big([{\cal U}_3-\widetilde{U}_3]x_\alpha\big)\big|+C\eps^2|\Gr_\alpha|.
\end{equation} Estimate \eqref{Psi} give
\begin{equation}\label{S41}
\big|{\cal M}_{D_\eps}\big(\Psi x_\alpha\big)\big|^2\le C{\eps^2\over\delta}\Big(1+{\eps^2\over \delta^2}\Big)\big[\GG_s(u,\Omega_\delta)\big]^2
\end{equation} while  \eqref{Estm} leads to 
\begin{equation}\label{S42}
|\Gr_\alpha|^2\le {C\over \eps^2}||{\cal R}||^2_{L^2(D_\eps;\R^3)}\le {C\over \eps} ||{\cal R}||^2_{L^4(D_\eps;\R^3)}\le {C\over \eps} ||{\cal R}||^2_{H^1(\omega;\R^3)} \le {C\over \eps\delta^3}[\GG_s(u,\Omega_\delta)]^2
\end{equation} and \eqref{V-Vtilde} with the Poincar-Wirtinger's inequality yield
\begin{equation}\label{S43}
\big|{\cal M}_{D_\eps}\big([{\cal U}_3-\widetilde{U}_3]x_\alpha\big)\big|^2\le {C\eps^2\over \delta}[\GG_s(u,\Omega_\delta)]^2
\end{equation} Finally, \eqref{S400}, \eqref{S41}, \eqref{S42} and \eqref{S43} we obtain
\begin{equation}\label{S44}
\big|{\cal M}_{I_\delta}({\cal Q}_\alpha)\big|^2\le {C\over\eps^2\delta}\Big(1+{\eps\over \delta^2}\Big)\big[\GG_s(u,\Omega_\delta)\big]^2
\end{equation} The third estimate in \eqref{EstmRod} implies
\begin{equation}\label{S45}
\big\|{\cal Q}(0)-{\cal M}_{I_\delta}({\cal Q})\big\|^2_2 \le C{\delta\over \eps^4}[\GG_s(u,B_{\eps,\delta})]^2.
\end{equation} From \eqref{S44} and \eqref{S45} we get \eqref{Q(0)First}.
\end{proof}

\section{Elastic structure.}
  \medskip
In this section  we assume that  the  structure ${\cal S}_{\delta,\eps}$ is made of an elastic material. The associated local energy $\widehat{W}_\eps\; :\;  \GX_3\longrightarrow \R^+$ is the following  St Venant-Kirchhoff's law (see [9]) 
\begin{equation}\label{HatW2}
\widehat{W}_\eps(F)=\left\{\begin{aligned}
&Q_\eps(F^TF-\GI_3)\quad\hbox{if}\quad \det(F)>0\\
&+\infty\hskip 2cm \hbox{if}\quad \det(F)\le 0.
\end{aligned}\right.
\end{equation} where the quadratic form $Q$ is given by
\begin{equation}\label{HatW3}
Q_\eps(E)=\left\{
\begin{aligned}
&Q_p(E)\quad \hbox{in the plate}\; \Omega_\delta,\\
&q_\eps^2Q_r(E) \quad \hbox{in the rod}\; B_{\eps,\delta},
\end{aligned}\right.
 \end{equation} with
 \begin{equation}\label{QRQP}
Q_p(E)={\lambda_p\over 8}\big(tr(E) \big)^2+{\mu_p\over 4}tr\big(E^2\big),\qquad 
Q_r(E)={\lambda_r\over 8}\big(tr(E) \big)^2+{\mu_r\over 4}tr\big(E^2\big), \end{equation}
and  where $(\lambda_p,\mu_p)$ (resp.  $(q_\eps^2\lambda_r, q_\eps^2\mu_r)$) are the Lam's coefficients of the plate (resp. the rod). The constant $q_\eps$ depends only on the rod, we set $q_\eps=\eps^\eta$, the parameter $\eta$ being such that

$\bullet$ $\eta=0$  for the same order for the the Lam's coefficients in the plate and the rod,

$\bullet$ $\eta>0$  for a softer  material in the rod than in the plate,

$\bullet$ $\eta<0$  for a softer  material in the plate than in the rod.
\vskip 1mm
 Let us recall (see e.g. \cite{FJM} or \cite{BGRod}) that for any $3\times 3$ matrix $F$ such that $\det(F)>0$ we have
\begin{equation}\label{HatW4}
tr([F^TF-\GI_3]^2) = |||F^TF-\GI_3|||^2\ge \hbox { dist }(F,SO(3))^2.
\end{equation}
\vskip 1mm
Hence, we denote by
\begin{equation}\label{GE}
{\cal E}(u,{\cal S}_{\delta,\varepsilon})=[\GG_s(u, \Omega_\delta))]^2+q_\eps^2[\GG_s(u, B_{\eps,\delta})]^2
\end{equation}  the linearized energy of a displacement $u\in H^1({\cal S}_{\delta,\varepsilon};\R^3)$.  We define   the total energy  
$J_{\delta}(v)$\footnote{For later convenience, we have added the term $\ds \int_{{\cal S}_{\delta,\eps}}f_{\delta}(x)\cdot  I_d(x)dx$
to the usual standard energy, indeed this does not affect the  minimizing problem for  $J_{\delta}$. }  over $\D_{\delta,\eps}$ by
\begin{equation}\label{J}
J_{\delta}(v)=\int_{{\cal S}_{\delta,\eps}}\widehat{W}_\eps(\nabla  v)(x)dx-\int_{{\cal S}_{\delta,\eps}}f_{\delta}(x)\cdot (v(x)-I_d(x))dx.
\end{equation}
\vskip 1mm
\subsection{Relations between $\delta$, $\eps$ and $q_\eps$.}
 In Section Subsection 5.2 we scale the applied forces in order to have the infimum of this total energy of order $\delta^{2\kappa-1}$ with $\kappa\ge 3$. In such way, the minimizing sequences $(v_\delta)$ satisfy
\begin{equation*}
 \big\|\nabla v^T_\delta\nabla v_\delta-\GI_3\big\|_{L^2(\Omega_{\delta} ; \R^{3\times 3})}\le  C\delta^{\kappa-1/2},\quad  \big\|\nabla v^T_\delta\nabla v_\delta-\GI_3\big\|_{L^2(B_{\eps} ; \R^{3\times 3})}\le C{\delta^{\kappa-1/2}\over q_\eps}.
 \end{equation*}  The above estimate in the plate $\Omega_\delta$ leads to the Von K\'arm\'an limit model ($\kappa=3$) or the classical linear plate model ($\kappa>3$). Since we wish at least to recover the linear model in the rod which corresponds to a Green-St Venant's strain tensor in the rod of order $\eps^{\kappa^{'}}$ with $\kappa^{'}>3$, we are led to assume that
\begin{equation}\label{Lien1}
\delta^{\kappa-1/2}=q_\eps\eps^{\kappa^{'}}.
\end{equation} 
Furthermore,  still for the above  estimates of the Green-St Venant's strain tensors, the bending in the plate if of order $\delta^{\kappa-2}$ and the stretching in the rod is of order $\eps^{\kappa^{'}-1}$. In this paper,  we wish these two quantities to match at the junction it is essential to have
\begin{equation}\label{Lien2}
\delta^{\kappa-2}=\eps^{\kappa^{'}-1}.
\end{equation}

As a consequence of the above relations  \eqref{Lien1} and \eqref{Lien2} we deduce that
\begin{equation}\label{relETA}
\delta^3=q_\eps^2\eps^{2}=\eps^{2\eta+2}
\end{equation}  which implies that $\eta$ must be chosen such that $\eta>-1$.  

{\it From now on we assume that \eqref{relETA} holds true and to recover a slightly general model in the rod we extend the analysis to  $\kappa^{'}\ge 3$.}
\vskip 1mm
\subsection{Assumptions on the forces and energy estimate.}

Let $v\in \D_{\delta,\eps}$ be a deformation. The estimates in Lemma \ref{lemme2} become (taking into account \eqref{relETA})

\begin{equation}\label{Q(0)-W(0)}\begin{aligned}
&|{\cal W}_\alpha(0)|^2\le  {C\over \delta^2}\Big[1+{\delta^2\over \eps^2}\Big]{\cal E}(u,{\cal S}_{\delta,\varepsilon}),\\
&|{\cal W}_3(0)-\widetilde {\cal U}_3(0,0)|^2\le {C\over \delta^3} (\eps^2+\delta){\cal E}(u,{\cal S}_{\delta,\varepsilon})\\
&||{\cal Q}(0)||^2_2\le  {C\over \eps\delta^2}\Big[{1\over \delta}+{1\over \eps}\Big]{\cal E}(u,{\cal S}_{\delta,\eps})\le  C( \delta+ \eps){{\cal E}(u,{\cal S}_{\delta,\eps})\over \eps^2\delta^3}.
\end{aligned}\end{equation} 
The following lemma give the estimates of the displacement $u=v-I_d$ in the rod $B_{\eps,\delta}$.

\begin{lemma}\label{lemme41} For any deformation $v$ in $\D_{\delta,\eps}$ the displacement $u=v-I_d$ satisfies the following Korn's type inequality  in the rod $B_{\eps,\delta}$: 
\begin{equation}\label{KoP}
\begin{aligned}
&||u_\alpha||^2_{L^2(B_{\eps,\delta})}\le   C{{\cal E}(u,{\cal S}_{\delta,\varepsilon})\over \eps^2q_\eps^2},\quad ||u_3||^2_{L^2(B_{\eps,\delta})}\le  C{{\cal E}(u,{\cal S}_{\delta,\varepsilon})\over q_\eps^2},\\
& ||\nabla u||^2_{L^2(B_{\eps,\delta};\R^9)}\le   C{{\cal E}(u,{\cal S}_{\delta,\varepsilon})\over \eps^2q_\eps^2},\quad ||u-{\cal W}||^2_{L^2(B_{\eps,\delta};\R^3)}\le  C{{\cal E}(u,{\cal S}_{\delta,\varepsilon})\over q_\eps^2}.
 \end{aligned}
\end{equation} 
\end{lemma}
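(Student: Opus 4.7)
The plan is to substitute the rod decomposition \eqref{DecR} of the displacement $u=v-I_d$, namely $u(x)={\cal W}(x_3)+{\cal Q}(x_3)\land(x_1\Ge_1+x_2\Ge_2)+\overline{w}(x)$, into each of the four inequalities and control each of the three pieces separately. The warping is handled immediately by Theorem \ref{Theorem II.2.2.}, which gives $\|\overline{w}\|^2_{L^2(B_{\eps,\delta})}\le C\eps^2[\GG_s(u,B_{\eps,\delta})]^2$ and $\|\nabla\overline{w}\|^2_{L^2}\le C[\GG_s(u,B_{\eps,\delta})]^2$; by the very definition \eqref{GE} one has $[\GG_s(u,B_{\eps,\delta})]^2\le {\cal E}(u,{\cal S}_{\delta,\eps})/q_\eps^2$, so the warping contribution satisfies all four target bounds at once.

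For the affine pieces I split $\phi(x_3)=\phi(0)+(\phi(x_3)-\phi(0))$ for each scalar $\phi\in\{{\cal W}_\alpha,{\cal W}_3,{\cal Q}_i\}$ and integrate over $B_{\eps,\delta}=D_\eps\times\,]-\delta,L[\,$: the term ${\cal W}$ picks up a volume factor $\eps^2$ and the term ${\cal Q}\land(x_1\Ge_1+x_2\Ge_2)$ a factor $\eps^4$. The junction estimates \eqref{Q(0)-W(0)} provided by Lemma \ref{lemme2} bound $|{\cal W}_\alpha(0)|^2$, $|{\cal W}_3(0)-\widetilde{\cal U}_3(0,0)|^2$ and $\|{\cal Q}(0)\|^2_2$; to pass from $|{\cal W}_3(0)-\widetilde{\cal U}_3(0,0)|$ to $|{\cal W}_3(0)|$ itself I apply the triangle inequality together with the Sobolev embedding $H^2(D(O,\rho_0))\hookrightarrow {\cal C}^0(\overline{D(O,\rho_0)})$ to \eqref{estVtildeH2}, which yields $|\widetilde{\cal U}_3(0,0)|^2\le C{\cal E}(u,{\cal S}_{\delta,\eps})/\delta^3$. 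The increments $\phi-\phi(0)$ are controlled in $L^2(-\delta,L)$ by the $H^1$ estimates \eqref{EstmR}.

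The gradient inequality is treated in the same spirit. Since ${\cal W}$ depends only on $x_3$, only $d{\cal W}/dx_3$ contributes, and the third line of \eqref{EstmRod} reduces its bound to the already obtained one on $\|{\cal Q}\|_{L^2(-\delta,L)}$; the derivatives of the rotation term are $\partial_\alpha({\cal Q}\land\cdots)={\cal Q}\land\Ge_\alpha$ (volume factor $\eps^2$) and $\partial_3({\cal Q}\land\cdots)={\cal Q}'\land(x_1\Ge_1+x_2\Ge_2)$ (volume factor $\eps^4$, controlled via the second line of \eqref{EstmRod} by $C[\GG_s(u,B_{\eps,\delta})]^2$). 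Putting the pieces together yields the third inequality.

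The real obstacle is pure bookkeeping: every surviving term on the right-hand side is a product of a power of $\eps$ and a power of $\delta$ multiplied by ${\cal E}(u,{\cal S}_{\delta,\eps})$, and the point is to recast all of them as multiples of $1/q_\eps^2$ or $1/(\eps^2 q_\eps^2)$. The relation $\delta^3=q_\eps^2\eps^2$ from \eqref{relETA} does exactly this: substituting it wherever a $1/\delta^3$ appears turns the residual into $1/(\eps^2 q_\eps^2)$ up to a factor $\delta+\eps$ which is harmlessly bounded. The place that requires the most care is the contribution $\eps^2|{\cal W}_3(0)|^2$ to $\|{\cal W}_3\|^2_{L^2(B_{\eps,\delta})}$, because \eqref{Q(0)-W(0)} supplies only $|{\cal W}_3(0)-\widetilde{\cal U}_3(0,0)|^2$ and the Sobolev step on $\widetilde{\cal U}_3$ is what closes the remaining gap.
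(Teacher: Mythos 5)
Your proof is correct and follows essentially the same route as the paper. The paper organizes the computation by introducing the rigid displacement $\Gr(x)={\cal W}(0)+{\cal Q}(0)\land x$ and estimating $u-\Gr$ (via \eqref{EstmRod}, \eqref{EstmR}) and $\Gr$ (via \eqref{Q(0)-W(0)}) separately, whereas you keep the raw decomposition $u={\cal W}+{\cal Q}\land(x_1\Ge_1+x_2\Ge_2)+\overline{w}$ and then split ${\cal W}$ and ${\cal Q}$ around their values at $x_3=0$; this is the same bookkeeping packaged differently, and the key ingredients --- Theorem \ref{Theorem II.2.2.}, the estimates \eqref{EstmR}, the junction bounds \eqref{Q(0)-W(0)}, and the relation \eqref{relETA} --- are identical in both. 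One thing you do that the paper leaves tacit: to bound $|{\cal W}_3(0)|$ one must combine the junction estimate on $|{\cal W}_3(0)-\widetilde{\cal U}_3(0,0)|$ with a bound on $|\widetilde{\cal U}_3(0,0)|$ itself, and the latter requires the Sobolev embedding $H^2(D(O,\rho_0))\hookrightarrow {\cal C}^0(\overline{D(O,\rho_0)})$ applied to \eqref{estVtildeH2}, giving $|\widetilde{\cal U}_3(0,0)|^2\le C\delta^{-3}{\cal E}(u,{\cal S}_{\delta,\eps})$ and hence $\eps^2|{\cal W}_3(0)|^2\le C\eps^2\delta^{-3}{\cal E}=C q_\eps^{-2}{\cal E}$; the paper simply asserts the bound on $\|\Gr_3\|^2_{L^2(B_{\eps,\delta})}$ without spelling out this ingredient, so your version is the more complete one.
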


\begin{proof} We define  the rigid displacement $\Gr$  by $\Gr(x)={\cal W}(0)+{\cal Q}(0)\land x$. From  \eqref{EstmR} we obtain the following inequalities for the displacement $u-r$:
\begin{equation}\label{KoPr}
\begin{aligned}
&||u_\alpha-\Gr_\alpha||_{L^2(B_{\eps,\delta})}\le  {C\over \eps} \GG_s(u,B_{\eps,\delta}),\\
&||u_3-\Gr_3||_{L^2(B_{\eps,\delta})}\le C \GG_s(u,B_{\eps,\delta}),\\
& ||\nabla u-\nabla \Gr||_{L^2(B_{\eps,\delta};\R^9)}\le  {C\over \eps} \GG_s(u,B_{\eps,\delta}).
 \end{aligned}
\end{equation} Then, the above estimates and \eqref{Q(0)-W(0)} give (observe that  due to relation \eqref{relETA} we have $\ds ||{\cal Q}(0)||^2_2\le {C\over \eps^4 q_\eps^2}{\cal E}(u,{\cal S}_{\delta,\eps})$) 
\begin{equation*}
\begin{aligned}
&||\Gr_\alpha||^2_{L^2(B_{\eps,\delta};\R^3)}\le    {C\over \eps^2 q_\eps^2}{\cal E}(u,{\cal S}_{\delta,\eps}),\quad ||\Gr_3||^2_{L^2(B_{\eps,\delta};\R^3)}\le {C\over q_\eps^2}{\cal E}(u,{\cal S}_{\delta,\eps}),\\
& ||\nabla \Gr||^2_{L^2(B_{\eps,\delta};\R^9)}\le  {C\over \eps^2 q_\eps^2}{\cal E}(u,{\cal S}_{\delta,\eps}).
 \end{aligned}
\end{equation*} which lead to the first third estimates in \eqref{KoP} using  \eqref{KoPr}. Before obtaining the estimate of $u-{\cal W}$ we write (see \eqref{DecR})
\begin{equation*}
u(x)-{\cal W}(x_3)=\big({\cal Q}(x_3)-{\cal Q}(0)\big)\land(x_1\Ge_1+x_2\Ge_2)+\overline{u}(x)+{\cal Q}(0)\land(x_1\Ge_1+x_2\Ge_2).
\end{equation*} Then due to estimates \eqref{EstmRod}, \eqref{EstmR} and \eqref{Q(0)-W(0)} we finally get the last inequality in \eqref{KoP}.
\end{proof} 
The following lemma is one of the key point of this article in order to obtain a priori estimates on minimizing sequences of the total energy.
\vskip 1mm
\begin{lemma}\label{lemAp} Let $v\in \D_{\delta,\eps}$ be a deformation and $u=v-I_d$. We have
\begin{equation}\label{GsPlaque8}
\GG_s(u,\Omega_\delta)\le C||dist(\nabla v, SO(3))||_{L^2(\Omega_\delta)}+C_1{||dist(\nabla v, SO(3))||^2_{L^2(\Omega_\delta)}\over \delta^{5/2}}
\end{equation} and the following estimate on $\GG_s( u, B_{\eps,\delta})$:
\begin{equation}\label{GGs11}
\begin{aligned}
\GG_s( u, B_{\eps,\delta}) \le  C||\hbox{dist}(\nabla v , SO(3))||_{L^2(B_{\eps,\delta})}  & +C_2{||\hbox{dist}(\nabla v , SO(3))||^2_{L^2(B_{\eps,\delta})}\over \eps^3}\\
 &\hskip-1cm  + C\big[\delta+\eps^{1/2}\big]{||dist(\nabla v, SO(3))||^2_{L^2(\Omega_\delta)}\over \eps\delta^{3}}.
\end{aligned}\end{equation}  The constants $C$ do not depend on $\delta$ and $\eps$.
\end{lemma}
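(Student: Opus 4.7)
The proof splits into the plate bound \eqref{GsPlaque8} and the rod bound \eqref{GGs11}. Both rely on combining the Friesecke--James--M\"uller (FJM) geometric rigidity theorem with the plate and rod decompositions of Theorems \ref{Theorem 3.3.} and \ref{Theorem II.2.2.}; the rod bound also uses the junction estimates of Lemma \ref{lemme2}.

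For the plate bound, my plan is to invoke the nonlinear Korn-type inequality for clamped thin plates established in \cite{BGJE}. The underlying argument applies FJM on a cover of $\Omega_\delta$ by cubes of size $\delta$ to produce a rotation field $R$ satisfying $\|\nabla v-R\|^2_{L^2(\Omega_\delta)}\le C\|\text{dist}(\nabla v,SO(3))\|^2_{L^2(\Omega_\delta)}$, and then uses the clamping $v=I_d$ on $\Gamma_{0,\delta}$ together with a Poincar\'e-type inequality to bring $R$ close to $I_3$ in $L^2$. The passage from the nonlinear strain $\nabla v^T\nabla v-I_3$ to the symmetric gradient $\nabla u+(\nabla u)^T$ via the identity $\nabla u+(\nabla u)^T=\nabla v^T\nabla v-I_3-(\nabla u)^T\nabla u$ introduces a quadratic correction $\|\nabla u\|^2_{L^4(\Omega_\delta)}$, and an anisotropic $H^1\hookrightarrow L^4$ embedding in a plate of thickness $2\delta$ produces exactly the $\delta^{-5/2}$ scaling in the quadratic term.

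For the rod bound I would argue in three steps. \emph{Step 1:} apply FJM on cylindrical cells of size $\eps$ covering $B_{\eps,\delta}$ to obtain a rotation $R_0\in SO(3)$ with
$$\|\nabla v-R_0\|^2_{L^2(B_{\eps,\delta})}\le C\|\text{dist}(\nabla v,SO(3))\|^2_{L^2(B_{\eps,\delta})}.$$
Since the rod is nowhere clamped, $R_0$ is not a priori close to $I_3$; by uniqueness of the decomposition \eqref{DecR}, the antisymmetric part of $R_0-I_3$ coincides, up to the FJM error, with the skew matrix $A_0$ whose axis is ${\cal Q}(0)$. \emph{Step 2:} split
$$\nabla u+(\nabla u)^T=\bigl[(\nabla v-R_0)+(\nabla v-R_0)^T\bigr]+\bigl[R_0+R_0^T-2I_3\bigr].$$
The $L^2(B_{\eps,\delta})$-norm of the first bracket is bounded by $2C\|\text{dist}(\nabla v,SO(3))\|_{L^2(B_{\eps,\delta})}$ plus a quadratic correction whose control by an $L^4$-Sobolev embedding on the disc $D_\eps$ yields the $\eps^{-3}$ factor in \eqref{GGs11}. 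Since $A_0$ is antisymmetric, $R_0+R_0^T-2I_3=-A_0^2+O(|A_0|^3)$, so the second bracket contributes an $L^2(B_{\eps,\delta})$ term of size $\eps|{\cal Q}(0)|^2$.

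\emph{Step 3:} bound $|{\cal Q}(0)|^2$ by \eqref{Q(0)First}, substitute the plate bound \eqref{GsPlaque8} to convert the $\GG_s(u,\Omega_\delta)^2$-terms into $\|\text{dist}(\nabla v,SO(3))\|^2_{L^2(\Omega_\delta)}$-terms, and absorb the residual $\GG_s(u,B_{\eps,\delta})^2$-contribution on the left-hand side. The main technical obstacle is precisely this bookkeeping: one must track the factors $1/(\eps^2\delta)$ and $\eps/(\eps^2\delta^3)$ from \eqref{Q(0)First}, multiply by the $\eps$-volume factor of the rod cross-section and by an additional $L^4$-Sobolev contribution on $D_\eps$, and check that the resulting coefficient of $\|\text{dist}(\nabla v,SO(3))\|^2_{L^2(\Omega_\delta)}$ simplifies exactly to the advertised $(\delta+\eps^{1/2})/(\eps\delta^3)$.
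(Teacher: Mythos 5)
Your overall structure — reduce \eqref{GGs11} to controlling the deviation from the identity of a reference rotation at the bottom of the rod, and use the plate's clamping through the junction — matches the paper's strategy. Your treatment of the plate estimate \eqref{GsPlaque8} is also consistent with the paper, which simply cites Lemma 4.3 of \cite{BGJE}. But your argument for the rod estimate \eqref{GGs11} has two genuine problems.

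First, in your \emph{Step 1} you assert that FJM on $\eps$-cells gives a \emph{single} rotation $R_0\in SO(3)$ with $\|\nabla v-R_0\|^2_{L^2(B_{\eps,\delta})}\le C\|\text{dist}(\nabla v,SO(3))\|^2_{L^2(B_{\eps,\delta})}$ and a constant independent of $\eps$. This is false for a rod of length $L$ and radius $\eps$: the geometric rigidity constant for a single rotation on such an elongated domain degenerates as $\eps\to 0$. What FJM (through the decomposition of Theorem II.2.2 of \cite{BGRod}, used in the paper's \eqref{DecRNL8}--\eqref{EstmRod8}) produces is a rotation \emph{field} $\GQ(x_3)$ with $\|\nabla v-\GQ\|_{L^2}\le C\|\text{dist}\|$, and the distance between $\GQ$ and a fixed rotation $\GQ(0)$ has to be paid for via $\|d\GQ/dx_3\|_{L^2}\le C\eps^{-2}\|\text{dist}\|$. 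The paper's Step 1 factors out $\GQ(0)$ by setting $\Gv=\GQ(0)^Tv$, keeps the quadratic term from $|||\GQ-\GQ(0)|||^2$, and ends up with the key intermediate bound $\GG_s(u,B_{\eps,\delta})\le C\|\text{dist}\|_B+C\eps^{-3}\|\text{dist}\|^2_B+C\eps|||\GQ(0)-\GI_3|||^2$.

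Second, and more seriously, your \emph{Step 3} proposes to estimate the junction rotation by \eqref{Q(0)First}. That bound lives in the \emph{linearized} world: it controls ${\cal Q}(0)$ (the vector from the elementary displacement decomposition) in terms of $\GG_s(u,\Omega_\delta)$ and $\GG_s(u,B_{\eps,\delta})$, the latter being precisely the quantity you are trying to estimate. Writing $X=\GG_s(u,B_{\eps,\delta})$, your plan yields an inequality of the form $X\le A+BX^2$ with $B\sim\delta/\eps^3\to\infty$, which is not absorbable without an a priori smallness of $X$ that is not available at this stage of the argument. (Lemma \ref{lemme2} is used \emph{after} Lemma \ref{lemAp} in the paper's logic, once the energy bound \eqref{EstDist} is in force.) The paper escapes this circularity by estimating the \emph{nonlinear} quantity $|||\GQ(0)-\GI_3|||^2$ directly in terms of $\|\text{dist}(\nabla v,SO(3))\|$ in both the plate and rod: it compares the rod's rotation field $\GQ$ to the plate's rotation field $\GR$ from the nonlinear plate decomposition (which satisfies $\GR=\GI_3$ on $\gamma_0$) over the junction cylinder (Step 2), then does a second, independent comparison via traces on $D_\eps\times\{0\}$ (Step 3), and combines the two in the regimes $\eps^2\le\delta$ and $\delta\le\eps^2$ (Step 4). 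Replacing your Step 3 by this nonlinear comparison is the missing ingredient.
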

 The proof is postponed in the Appendix.
 \vskip 1mm
 As an immediate consequence of the Lemmas \ref{lemme41} and \ref{lemAp}, we get the full estimates of the displacement $u=v-I_d$ in the rod.
 \vskip 1mm
 \begin{cor} For any deformation $v$ in $\D_{\delta,\eps}$ the displacement $u=v-I_d$ satisfies the following nonlinear Korn's type inequality  in the rod $B_{\eps,\delta}$: 
\begin{equation}\label{KoP2}
\begin{aligned}
||u_\alpha||_{L^2(B_{\eps,\delta})} & \le   C\Big[{||dist(\nabla v, SO(3))||_{L^2(\Omega_\delta)}\over \eps q_\eps}
+(\sqrt\delta+\sqrt\eps){||dist(\nabla v, SO(3))||^2_{L^2(\Omega_\delta)}\over \eps^3q_\eps^2}\Big]\\
&+ C{||\hbox{dist}(\nabla v , SO(3))||_{L^2(B_{\eps,\delta})} \over \eps}+2C_2{||\hbox{dist}(\nabla v , SO(3))||^2_{L^2(B_{\eps,\delta})}\over \eps^4},\\
||u_3||_{L^2(B_{\eps,\delta})} & \le   C\Big[{||dist(\nabla v, SO(3))||_{L^2(\Omega_\delta)}\over q_\eps}
+(\sqrt\delta+\sqrt\eps){||dist(\nabla v, SO(3))||^2_{L^2(\Omega_\delta)}\over \eps^2q_\eps^2}\Big]\\
&+ C{||\hbox{dist}(\nabla v , SO(3))||^2_{L^2(B_{\eps,\delta})}}+2C_2{||\hbox{dist}(\nabla v , SO(3))||^2_{L^2(B_{\eps,\delta})}\over \eps^2},\\
||\nabla u||_{L^2(B_{\eps,\delta};\R^9)}& \le   C\Big[{||dist(\nabla v, SO(3))||_{L^2(\Omega_\delta)}\over \eps q_\eps}
+(\sqrt\delta+\sqrt\eps){||dist(\nabla v, SO(3))||^2_{L^2(\Omega_\delta)}\over \eps^3q_\eps^2}\Big]\\
&+ C{||\hbox{dist}(\nabla v , SO(3))||_{L^2(B_{\eps,\delta})} \over \eps}+2C_2{||\hbox{dist}(\nabla v , SO(3))||^2_{L^2(B_{\eps,\delta})}\over \eps^4}.
 \end{aligned}
\end{equation}

 \end{cor}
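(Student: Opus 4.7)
My plan is to combine the two lemmas directly: Lemma \ref{lemme41} bounds $\|u_\alpha\|$, $\|u_3\|$, $\|\nabla u\|$ over the rod by $\sqrt{\mathcal{E}(u,\mathcal{S}_{\delta,\eps})}$ (divided by suitable powers of $\eps$ and $q_\eps$), while Lemma \ref{lemAp} controls each of the two contributions to $\mathcal{E}$ by the $L^2$ norms of $\mathrm{dist}(\nabla v, SO(3))$ on the corresponding subdomain. The corollary will then follow by substituting the second set of estimates into the first and grouping terms.

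First, I would take the square root of each inequality in \eqref{KoP}. Using the definition \eqref{GE} of $\mathcal{E}$ together with the elementary $\sqrt{a^2+b^2}\le a+b$ for $a,b\ge 0$, this yields
\[
\sqrt{\mathcal{E}(u,\mathcal{S}_{\delta,\eps})}\le \mathcal{G}_s(u,\Omega_\delta)+q_\eps\,\mathcal{G}_s(u,B_{\eps,\delta}),
\]
which splits each rod bound into a plate piece and a rod piece. For instance, the first line of \eqref{KoP} becomes
$\|u_\alpha\|_{L^2(B_{\eps,\delta})}\le C\,\mathcal{G}_s(u,\Omega_\delta)/(\eps q_\eps)+C\,\mathcal{G}_s(u,B_{\eps,\delta})/\eps$, with analogous splittings (and analogous prefactors, read off from \eqref{KoP}) for $\|u_3\|$ and $\|\nabla u\|$.

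Second, I would substitute Lemma \ref{lemAp} into each $\mathcal{G}_s$ factor. The linear-in-dist parts of Lemma \ref{lemAp} immediately reproduce the linear-in-dist terms in the statement. The rod quadratic term $C_2\|\mathrm{dist}(\nabla v,SO(3))\|^2_{L^2(B_{\eps,\delta})}/\eps^3$, multiplied by the rod prefactor, yields the rod-quadratic contribution of the corollary. The quadratic-in-$\|\mathrm{dist}\|_{L^2(\Omega_\delta)}$ terms come from two sources: the plate estimate contributes through $C_1/\delta^{5/2}$ (carried by the $1/(\eps q_\eps)$ prefactor or its analogue), while the rod estimate contributes through the cross-junction coefficient $C(\delta+\eps^{1/2})/(\eps\delta^{3})$ (carried by the $1/\eps$ prefactor or its analogue). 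Invoking the relation \eqref{relETA}, $\delta^{3}=q_\eps^{2}\eps^{2}$, I would rewrite both contributions as powers of $\eps^{3}q_\eps^{2}$ (or $\eps^{2}q_\eps^{2}$ for the $\|u_3\|$ line) and collapse them into the single coefficient $C(\sqrt\delta+\sqrt\eps)/(\eps^{3}q_\eps^{2})$ appearing in the corollary.

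The main (and essentially only) obstacle is the algebraic bookkeeping in this last step: under the constraint $\delta^{3}=q_\eps^{2}\eps^{2}$, one must check that both plate-dist quadratic contributions are dominated by a constant times the claimed uniform coefficient; this is a routine, but slightly fiddly, manipulation of the powers of $\delta$, $\eps$, $q_\eps$. The two remaining bounds, for $\|u_3\|_{L^2(B_{\eps,\delta})}$ and $\|\nabla u\|_{L^2(B_{\eps,\delta};\R^9)}$, follow by repeating the identical three-step procedure with the prefactors inherited from their own lines in \eqref{KoP}.
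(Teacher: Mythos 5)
Your strategy --- take square roots in \eqref{KoP}, split $\sqrt{\mathcal E(u,{\cal S}_{\delta,\eps})}\le \GG_s(u,\Omega_\delta)+q_\eps\GG_s(u,B_{\eps,\delta})$, then insert \eqref{GsPlaque8} and \eqref{GGs11} --- is exactly what the paper intends: the corollary appears there with no justification beyond the phrase ``as an immediate consequence'' of Lemmas~\ref{lemme41} and~\ref{lemAp}, and your first two steps are fine.

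The gap is in the third step, which you describe as ``routine but slightly fiddly'' without carrying it out. When one does, the claimed coefficient does not appear. Take the $\|u_\alpha\|$ line: from \eqref{KoP},
\begin{equation*}
\|u_\alpha\|_{L^2(B_{\eps,\delta})}\le C\,\frac{\GG_s(u,\Omega_\delta)}{\eps q_\eps}+C\,\frac{\GG_s(u,B_{\eps,\delta})}{\eps}.
\end{equation*}
The cross-junction term of \eqref{GGs11}, carried by the $1/\eps$ prefactor, contributes
\begin{equation*}
C\big(\delta+\eps^{1/2}\big)\,\frac{\|\mathrm{dist}(\nabla v,SO(3))\|^2_{L^2(\Omega_\delta)}}{\eps^2\delta^3}
= C\big(\delta+\eps^{1/2}\big)\,\frac{\|\mathrm{dist}(\nabla v,SO(3))\|^2_{L^2(\Omega_\delta)}}{\eps^4 q_\eps^2},
\end{equation*}
since $\eps^2\delta^3=\eps^4 q_\eps^2$ by \eqref{relETA}, whereas \eqref{KoP2} states the coefficient $(\sqrt\delta+\sqrt\eps)/(\eps^3 q_\eps^2)$. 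The ratio of the computed to the claimed coefficient is $(\delta+\eps^{1/2})/\big(\eps(\sqrt\delta+\sqrt\eps)\big)$, which blows up as $\eps\to 0$ for every admissible $\eta>-1$, so the computed term is \emph{not} dominated by the stated one. A similar check on the plate piece, $C_1\|\mathrm{dist}(\nabla v,SO(3))\|^2_{L^2(\Omega_\delta)}/(\delta^{5/2}\eps q_\eps)=C_1\|\mathrm{dist}(\nabla v,SO(3))\|^2_{L^2(\Omega_\delta)}/\delta^4$ (using $q_\eps=\delta^{3/2}/\eps$), requires $\eps/\delta\le C(\sqrt\delta+\sqrt\eps)$, which fails under \eqref{relETA} once $\eta>0$. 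So the exponents in \eqref{KoP2} do not follow from the plain substitution you outline: either the corollary carries a slip in the power of $\eps$ (your substitution naturally yields $\eps^4q_\eps^2$ in the $u_\alpha$ and $\nabla u$ lines and $\eps^3q_\eps^2$ in the $u_3$ line), or an additional estimate is needed that your outline does not supply. This bookkeeping is precisely where the content of the corollary lies, and it must be done explicitly rather than asserted.
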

\noindent {\bf  First assumptions on the forces. }{\it  
To introduce the scaling on $f_{\delta}$, let us consider   $f_r$, $g_1$, $g_2$ in $ L^2(0,L; \R^3)$ and  $f_p\in L^2(\omega;\R^3)$ and assume  that the force  $f_{\delta}$ is given  by
\begin{equation}\label{ForceP}
\begin{aligned}
f_{\delta}(x)&=q_\eps^2 \eps^{\kappa^{'}} \Big[f_{r,1}(x_3)\Ge_1+f_{r,2}(x_3)\Ge_2+{1\over \eps}f_{r,3}(x_3)\Ge_3+{x_1\over \eps^2} g_{1}(x_3)+{x_2\over \eps^2}g_{2}(x_3)\Big],\\ 
&\quad x\in B_{\eps ,\delta},\quad x_3>\delta,\\ \\
f_{\delta,\alpha}(x)&=\delta^{\kappa-1}  f_{p,\alpha}(x_1,x_2),\qquad f_{\delta,3}(x)=\delta^{\kappa}  f_{p,3}(x_1,x_2),\qquad x\in \Omega_\delta.
\end{aligned}\end{equation} }
We set
\begin{equation}\label{Forces}
N(f_p)=||f_p||_{L^2(\omega;\R^3)},\qquad \qquad N(f_r)=||f_r||_{L^2(0,L;\R^3)}+\sum_{\alpha=1}^2||g_\alpha||_{L^2(0,L;\R^3)}.
\end{equation}
\begin{lemma}\label{LemCond} Let $v\in \D_{\delta,\eps}$  be such that $J(v)\le 0$ and $u=v-I_d$.  Under the assumption \eqref{ForceP} on the applied forces, we have

\noindent $\bullet$ if $\kappa>3$ and $\kappa^{'}>3$ then 
\begin{equation}\label{EstEnerg}
\begin{aligned}
||dist(\nabla v, SO(3))||_{L^2(\Omega_\delta)}+q_\eps||\hbox{dist}(\nabla v , SO(3)) & ||_{L^2(B_{\eps,\delta})}\\
\le C\delta^{\kappa-1/2}\big(N(f_p)+ & N(f_r)\big),
\end{aligned}\end{equation}
\noindent $\bullet$ if $\kappa=3$ and $\kappa^{'}>3$ then there exists a constant $C^*$  which do not depend on $\delta$ and $\eps$ such that, if the forces applied to the plate $\Omega_\delta$ satisfy
 \begin{equation}\label{Cst}
N(f_p) < {C^* \mu_p}
\end{equation} then \eqref{EstEnerg} still holds true,

\noindent $\bullet$ if $\kappa>3$ and $\kappa^{'}=3$ then there exists a constant $C^{**}$  which do not depend on $\delta$ and $\eps$ such that, if the forces applied to the rod $B_{\eps,\delta}$ satisfy
 \begin{equation}\label{Cstst}
N(f_r) < {C^{**} \mu_r}
\end{equation} then \eqref{EstEnerg} still holds true,

\noindent $\bullet$ if $\kappa=3$ and $\kappa^{'}=3$ then  if the applied forces satisfy \eqref{Cst} and \eqref{Cstst}  then \eqref{EstEnerg} still holds true.

\noindent The constants $C$, $C^*$ and $C^{**}$ depend only on $\omega$ and $L$.
\end{lemma}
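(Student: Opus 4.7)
The strategy is to start from $J(v)\le 0$ and sandwich the quantity $X^2+Y^2$, where $X=\|\hbox{dist}(\nabla v,SO(3))\|_{L^2(\Omega_\delta)}$ and $Y=q_\eps\|\hbox{dist}(\nabla v,SO(3))\|_{L^2(B_{\eps,\delta})}$, between a multiple of the elastic energy (from below) and a multiple of the work of the forces (from above), then close the inequality by Young and the nonlinear Korn inequalities of Lemma \ref{lemAp}.

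\textbf{Step 1 (lower bound on the stored energy).} Since $J(v)\le 0$ forces $\det\nabla v>0$ a.e., one has $\widehat{W}_\eps(\nabla v)=Q_\eps(\nabla v^T\nabla v-\GI_3)$ pointwise. Because $\lambda_p,\mu_p,\lambda_r,\mu_r>0$, the quadratic form is coercive on symmetric matrices: $Q_p(E)\ge (\mu_p/4)\,\mathrm{tr}(E^2)$ and analogously for $Q_r$. Combining with \eqref{HatW4} this gives
\begin{equation*}
\int_{{\cal S}_{\delta,\eps}}\widehat{W}_\eps(\nabla v)\,dx\ge c_0\bigl(X^2+Y^2\bigr),\qquad c_0=\frac14\min(\mu_p,\mu_r).
\end{equation*}

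\textbf{Step 2 (upper bound on the work).} For the plate forces, Cauchy--Schwarz together with the scalings $\delta^{\kappa-1}$ and $\delta^\kappa$, the volume factor $\delta^{1/2}$ and the linear Korn estimates \eqref{KoP0} ($\|u_\alpha\|_{L^2(\Omega_\delta)}\le C\,\GG_s(u,\Omega_\delta)$ and $\|u_3\|_{L^2(\Omega_\delta)}\le (C/\delta)\GG_s(u,\Omega_\delta)$) give
\begin{equation*}
\Big|\int_{\Omega_\delta}f_\delta\cdot u\,dx\Big|\le C\delta^{\kappa-1/2}N(f_p)\,\GG_s(u,\Omega_\delta).
\end{equation*}
For the rod forces I treat the five pieces in \eqref{ForceP} separately. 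The crucial observation for the moment terms $(x_\alpha/\eps^2)g_\alpha$ is that $|x_\alpha|\le\eps$ in $B_{\eps,\delta}$, so a factor $1/\eps$ survives and is absorbed by $\|g_\alpha\|_{L^2(B)}\sim\eps\,\|g_\alpha\|_{L^2(0,L)}$; combined with the rod Korn bounds \eqref{KoP} of Lemma \ref{lemme41} this yields
\begin{equation*}
\Big|\int_{B_{\eps,\delta}}f_\delta\cdot u\,dx\Big|\le Cq_\eps\eps^{\kappa'}N(f_r)\,\sqrt{{\cal E}(u,{\cal S}_{\delta,\eps})}=C\delta^{\kappa-1/2}N(f_r)\sqrt{{\cal E}(u,{\cal S}_{\delta,\eps})}
\end{equation*}
by the relation \eqref{Lien1}.

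\textbf{Step 3 (convert the bounds into $X,Y$).} Lemma \ref{lemAp} gives
\begin{equation*}
\GG_s(u,\Omega_\delta)\le CX+C_1X^2/\delta^{5/2},\qquad q_\eps\GG_s(u,B_{\eps,\delta})\le CY+C_2Y^2/(q_\eps\eps^3)+Cq_\eps(\delta+\eps^{1/2})X^2/(\eps\delta^3).
\end{equation*}
Plugging into Step 2 and using \eqref{relETA} to simplify the coefficients of the quadratic corrections (all appearing as $\eps^{\kappa'-3}$, $\delta^{\kappa-3}$ or smaller mixed powers of $\delta,\eps$), the inequality $J(v)\le 0$ becomes
\begin{equation*}
c_0(X^2+Y^2)\le C\delta^{\kappa-1/2}\bigl(N(f_p)+N(f_r)\bigr)(X+Y)+R_\delta(X,Y),
\end{equation*}
where $R_\delta(X,Y)$ collects correction terms dominated by $\delta^{\kappa-3}N(f_p)X^2+\eps^{\kappa'-3}N(f_r)Y^2+(\hbox{vanishing powers})\cdot X^2$.

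\textbf{Step 4 (Young and absorption).} Young's inequality handles the linear-in-$(X+Y)$ term, giving $\tfrac{c_0}{2}(X^2+Y^2)\le C\delta^{2\kappa-1}(N(f_p)+N(f_r))^2+R_\delta(X,Y)$. If $\kappa>3$ the factor $\delta^{\kappa-3}\to 0$ absorbs the plate correction for $\delta$ small; if $\kappa=3$ the correction is exactly $CN(f_p)X^2$ and the smallness condition \eqref{Cst} with $C^*=c_0/(2C)$ absorbs it into $\tfrac{c_0}{2}X^2$. The same dichotomy for $\kappa'$ handles the rod quadratic correction via \eqref{Cstst}. Taking square roots gives \eqref{EstEnerg}.

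\textbf{Expected main obstacle.} Verifying that every piece of the rod forces (especially the moment densities $(x_\alpha/\eps^2)g_\alpha$) produces exactly the uniform scaling $\delta^{\kappa-1/2}N(f_r)\sqrt{\cal E}$ rather than a worse $\eps^{-1}$ loss, and then checking that \emph{each} nonlinear correction produced by Lemma \ref{lemAp} (after multiplication by the force scaling and use of \eqref{relETA}) is either a vanishing power of $\delta,\eps$ (if $\kappa>3$, $\kappa'>3$) or a bounded constant times $N(f_p)$ or $N(f_r)$ (if $\kappa=3$ or $\kappa'=3$), so that the smallness hypotheses \eqref{Cst}, \eqref{Cstst} are sufficient.
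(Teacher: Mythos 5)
Your overall strategy — coercivity of the stored energy from below, Cauchy--Schwarz plus Korn on the work, Lemma \ref{lemAp} to pass to $\hbox{dist}(\nabla v,SO(3))$, then absorption of the quadratic corrections — is exactly the paper's, and Steps 1, 3, 4 are essentially sound. However the place you flag as the ``expected main obstacle'' is precisely where your argument, as written, does not close.

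For the moment densities $q_\eps^2\eps^{\kappa'}\dfrac{x_\alpha}{\eps^2}g_\alpha$, the chain ``$|x_\alpha|\le\eps$ kills one $\eps$, $\|g_\alpha\|_{L^2(B_{\eps,\delta})}\sim\eps\|g_\alpha\|_{L^2(0,L)}$ kills the other'' leaves you pairing $g_\alpha$ with the full displacement, and the worst Korn bound you then invoke is $\|u_\alpha\|_{L^2(B_{\eps,\delta})}\le C\sqrt{{\cal E}}/(\eps q_\eps)$ from \eqref{KoP}. Tracking powers: $q_\eps^2\eps^{\kappa'}\cdot\eps^{-1}\cdot\eps\cdot(\eps q_\eps)^{-1}=q_\eps\eps^{\kappa'-1}$, i.e.\ one power of $\eps$ worse than the needed $q_\eps\eps^{\kappa'}=\delta^{\kappa-1/2}$. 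The way out is not the pointwise bound on $x_\alpha$ but orthogonality: since $\int_{D_\eps}x_\alpha\,dx_1dx_2=0$ and $g_\alpha$, ${\cal W}$ depend only on $x_3$, one has $\int_{B_{\eps,\delta}}\frac{x_\alpha}{\eps^2}g_\alpha\cdot u=\int_{B_{\eps,\delta}}\frac{x_\alpha}{\eps^2}g_\alpha\cdot(u-{\cal W})$, and then the fourth estimate in \eqref{KoP}, $\|u-{\cal W}\|_{L^2(B_{\eps,\delta};\R^3)}\le C\sqrt{{\cal E}}/q_\eps$, is a factor $\eps$ better than $\|u_\alpha\|_{L^2}$ and restores the correct scaling. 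Without this your work bound in Step 2 is off by $\eps^{-1}$ and the final inequality \eqref{EstEnerg} does not follow.

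A secondary, more cosmetic issue: in Step 1 you collapse the coercivity constant to $c_0=\tfrac14\min(\mu_p,\mu_r)$, which is lossy. The paper keeps $\tfrac{\mu_p}{4}\|dist(\nabla v,SO(3))\|^2_{L^2(\Omega_\delta)}+\tfrac{\mu_r}{4}q_\eps^2\|dist(\nabla v,SO(3))\|^2_{L^2(B_{\eps,\delta})}$ separately, which is what yields the decoupled smallness conditions $N(f_p)<C^*\mu_p$ and $N(f_r)<C^{**}\mu_r$ with $C^*$, $C^{**}$ independent of the Lam\'e constants, as the statement requires. With $c_0$ as you define it, your $C^*=c_0/(2C)$ is not of the claimed form and entangles $\mu_p$ and $\mu_r$. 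Keep the two coercivity constants separate through Step 4.
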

\vskip 1mm
Recall that we want a geometric energy in the plate $||dist(\nabla v, SO(3))||_{L^2(\Omega_\delta)}$ of order less than $\delta^{5/2}$ in order to obtain a limit Von K\'arm\'an plate model. Lemma \ref{LemCond} prompts us to adopt the conditions \eqref{Cst} if $\kappa=3$ and \eqref{Cstst} if $\kappa^{'}=3$. Let us notice that in the case $\kappa=3$ under the only assumption \eqref{ForceP} on the forces (i.e. without assumption \eqref{Cst}) the geometric energy is generally of order $\delta^{3/2}$ which corresponds to a limit model allowing  large deformations  (see \cite{SimplCoq}).
\vskip 1mm
\noindent {\bf  Second assumptions on the forces. }{\it From now on, in the whole paper we assume that

\noindent $\bullet$ if $\kappa=3$ then 
 \begin{equation}\label{Cstar}
N(f_p) < {C^{*} \mu_p},
\end{equation}
$\bullet$ if $\kappa^{'}=3$ then 
\begin{equation}\label{Cstarstar}
N(f_r) < {C^{**} \mu_r}.
\end{equation}}
\vskip 1mm
\begin{proof}{\it Proof of Lemma \ref{LemCond}.}
 Notice that $J_{\delta}(I_d)=0$. So,  in order to minimize $J_{\delta}$ we only need to  consider  deformations $v $ of $\D_{\delta,\eps}$ such that   $J_{\delta}(v )\le 0$.
From \eqref{KoP0}, \eqref{KoP} and  the assumptions \eqref{ForceP} on the body forces, we obtain for any $v\in \D_{\delta,\eps}$ and for  $u=v-I_d$
\begin{equation}\label{nivf}
\begin{aligned}
\Big|\int_{{\cal S}_{\delta,\eps}}f_{\delta}(x)\cdot u(x)dx \Big| & \le C_3  \delta^{\kappa-1/2}N(f_p)\GG_s(u, \Omega_\delta)\\
& +C_4q_\eps\eps^{\kappa^{'}}N(f_r)\sqrt{{\cal E}(u,{\cal S}_{\delta,\eps})}.
\end{aligned}\end{equation} Now we  use  the definition \eqref{GE} ${\cal E}(u,{\cal S}_{\delta,\eps})$ and  Lemma \ref{lemAp} to bound $\GG_s(u, \Omega_\delta)$ and $\GG_s(u, B_{\eps,\delta})$ and ${\cal E}(u,{\cal S}_{\delta,\eps})$. Taking into account  the relations \eqref{Lien1}-\eqref{relETA} we obtain
\begin{equation}\label{nivf}
\begin{aligned}
\Big|\int_{{\cal S}_{\delta,\eps}}f_{\delta}(x)\cdot u(x)dx \Big|\le & C_1C_3\delta^{\kappa-3}N(f_p)||dist(\nabla v, SO(3))||^2_{L^2(\Omega_\delta)} \\
&\hskip-1.5cm  + C\big[\sqrt\delta+\sqrt \eps\big]\eps^{\kappa^{'}-3}N(f_r)||dist(\nabla v, SO(3))||^2_{L^2(\Omega_\delta)}\\
&\hskip-1.5cm  +2C_2C_4 q^2_\eps\eps^{\kappa^{'}-3}N(f_r)||\hbox{dist}(\nabla v , SO(3))||^2_{L^2(B_{\eps,\delta})}\\
&\hskip-1.5cm + C\delta^{\kappa-1/2}\big\{N(f_p)+N(f_r)\big\}||dist(\nabla v, SO(3))||_{L^2(\Omega_\delta)}\\
&\hskip -1.5cm +C q^2_\eps\eps^{\kappa^{'}}N(f_r)\big||\hbox{dist}(\nabla v , SO(3))||_{L^2(B_{\eps,\delta})}.
\end{aligned}\end{equation} From \eqref{HatW2}, \eqref{HatW3}, \eqref{QRQP} and \eqref{HatW4} we have 
\begin{equation}\label{coerW}
\begin{aligned}
{\mu_p\over 4}||dist(\nabla v, SO(3))||^2_{L^2(\Omega_\delta)}+{\mu_rq_\eps^2\over 4}||dist(\nabla v, SO(3))||^2_{L^2(B_{\eps,\delta})}\\\le \int_{{\cal S}_{\delta,\eps}}\widehat{W}_\eps(\nabla  v)(x)dx\le \int_{{\cal S}_{\delta,\eps}}f_{\delta}(x)\cdot u(x)dx.
\end{aligned}\end{equation} Then using \eqref{nivf} we get

\begin{equation}\label{GsPlaque}
\begin{aligned}
&\Big[{\mu_p\over 4}-C_1C_3\delta^{\kappa-3}N(f_p)-C\big[\delta+\eps^{1/2}\big]\eps^{\kappa^{'}-3}N(f_r)\Big]
||dist(\nabla v, SO(3))||^2_{L^2(\Omega_\delta)}\\
+&\Big[{\mu_r\over 4}-2C_2C_4 \eps^{\kappa^{'}-3}N(f_r)\Big]q_\eps^2||dist(\nabla v, SO(3))||^2_{L^2(B_{\eps,\delta})}\\
\le & C\delta^{\kappa-1/2}\big\{N(f_p)+N(f_r)\big\}||dist(\nabla v, SO(3))||_{L^2(\Omega_\delta)}\\
+ &C q^2_\eps\eps^{\kappa^{'}}N(f_r)||\hbox{dist}(\nabla v , SO(3))||_{L^2(B_{\eps,\delta})}\\
\le & C\delta^{\kappa-1/2}\big\{N(f_p)+N(f_r)\big\}\big(||dist(\nabla v, SO(3))||_{L^2(\Omega_\delta)}+q_\eps||\hbox{dist}(\nabla v , SO(3))||_{L^2(B_{\eps,\delta})}\big).
\end{aligned}\end{equation} Now, recall that $\kappa\ge 3$ and $\kappa^{'}\ge 3$, so that first $\big[\delta+\eps^{1/2}\big]\eps^{\kappa^{'}-3}\to 0$. Secondly, setting $C^*=4C_1C_3$ and $C^{**}=8C_2C_4$ then \eqref{EstEnerg} holds true in any case of the lemma.
\end{proof}
\noindent Recalling that $\delta^{\kappa-1/2}=q_\eps\eps^{\kappa^{'}}$, we first deduce from Lemma  \ref{LemCond}
\begin{equation}\label{EstDist}
||dist(\nabla v, SO(3))||_{L^2(\Omega_\delta)}\le C\delta^{\kappa-1/2},\quad  ||\hbox{dist}(\nabla v , SO(3)) ||_{L^2(B_{\eps,\delta})}\le C\eps^{\kappa^{'}}.\end{equation} Then applying \eqref{GsPlaque} of Lemma \ref{lemAp} we obtain
\begin{equation}\label{EstGsPlaque}
\GG_s(u,\Omega_\delta) \le C\delta^{\kappa-1/2}
\end{equation} while \eqref{GGs11} gives
\begin{equation*}
\GG_s(u,B_{\eps,\delta}) \le C\eps^{\kappa^{'}}+C\big[\delta+\eps^{1/2}\big]{||dist(\nabla v, SO(3))||_{L^2(\Omega_\delta)}\over \eps\delta^3}\le
 C\delta^{\kappa-1/2}+C\big[\delta+\eps^{1/2}\big]{\delta^{2\kappa-4}\over \eps} \end{equation*} and using \eqref{Lien2} yields
 \begin{equation}\label{EstGsPoutre}
\GG_s(u,B_{\eps,\delta}) \le C\eps^{\kappa^{'}}.\end{equation} Finally for any deformation $v\in \D_{\delta,\eps}$ and $u=v-I_d$ such that $J(v)\le 0$  we have
\begin{equation}\label{EstDist}
\begin{aligned}
&{\cal E}(u,{\cal S}_{\delta,\eps})\le C\delta^{2\kappa-1}=C q_\eps^2\eps^{2\kappa^{'}},\qquad  \hbox{and}\quad \int_{{\cal S}_{\delta,\eps}}f_{\delta}\cdot u\le C\delta^{2\kappa-1}.\end{aligned}\end{equation} Moreover, the above inequality together with  \eqref{coerW}show that \begin{equation}\label{EstW}
\int_{{\cal S}_{\delta,\eps}}\widehat{W}_\eps(\nabla  v)(x)dx \le C\delta^{2\kappa-1}\end{equation}
which in turn leads to
\begin{equation}\label{EstGSV}
 \big\|\nabla v^T\nabla v-\GI_3\big\|_{L^2(\Omega_{\delta} ; \R^{3\times 3})}\le  C\delta^{\kappa-1/2},\quad  \big\|\nabla v^T\nabla v-\GI_3\big\|_{L^2(B_{\eps,\delta} ; \R^{3\times 3})}\le  C\eps^{\kappa^{'}}\end{equation}
From   \eqref{EstDist} we also obtain
\begin{equation}\label{EstJ}
c\delta^{2\kappa-1}\le J_{\delta}(v)\le 0.
\end{equation}
We set
\begin{equation}\label{mdelta}
m_{\delta}=\inf_{v\in \D_{\delta,\eps}}J_{\delta}(v).
\end{equation}  In general, a minimizer of $J_{\delta}$ does not exist  on $\D_{\delta,\eps}$.
As a consequence of  \eqref{EstJ} we  have 
$$c\le {m_{\delta}\over \delta^{2\kappa-1}}\le 0.$$

\section{Limits of the Green-St Venant's strain tensors.}

In this subsection and the following one, we consider a sequence of deformations $(v_\delta)$ belonging to $\D_{\delta,\eps}$ and satisfying ($u_\delta=v_\delta-I_d$) 
\begin{equation}\label{energie}
{\cal E}(u_\delta,{\cal S}_{\delta,\eps})\le C\delta^{2\kappa-1}
\end{equation}
or equivalently
\begin{equation*}
{\cal E}(u_\delta,{\cal S}_{\delta,\eps})\le Cq_\eps^2\eps^{2\kappa^{'}}.
\end{equation*} Inequality \eqref{energie} implies
$$\GG_s(u_\delta , \Omega_\delta)\le C\delta^{\kappa-1/2}, \qquad \GG_s(u_\delta , B_{\eps,\delta}) \le C\eps^{\kappa^{'}}.$$
For any open subset ${\cal O}\subset\R^2$ and for any field $\psi\in H^1({\cal O};\R^3)$, we denote
\begin{equation}\label{NotGam}
\gamma_{\alpha\beta}(\psi)={1\over 2}\Big({\partial\psi_\alpha\over \partial x_\beta}+{\partial\psi_\beta\over \partial x_\alpha}\Big), \qquad (\alpha,\beta)\in\{1,2\}.
\end{equation}

\subsection{The rescaling operators.}

Before rescaling the domains, we introduce the reference domain $\Omega$ for the plate and the one $B$ for the rod
$$\Omega=\omega\times ]-1,1[,\qquad B=D\times ]0, L[=D(O, 1)\times ]0, L[.$$
As usual when dealing with thin  structures, we rescale $\Omega_\delta$ and $B_{\eps,\delta}$ using -for the plate-  the operator
$$\Pi_\delta( w)(x_1,x_2,X_3)=w(x_1,x_2,\delta X_3)\hbox{ for any}\;\; (x_1, x_2, X_3)\in \Omega $$
defined for e.g. $w\in L^2(\Omega_\delta)$ for which $\Pi_\delta (w)\in  L^2(\Omega)$ and using -for the rod- the operator
$$P_\eps( w)(X_1,X_2,x_3)=w(\eps X_1, \eps X_2, x_3)\hbox{ for any}\;\; (X_1, X_2, x_3)\in B $$
defined for e.g. $w\in L^2(B_{\varepsilon,\delta})$ for which $P_\eps (w)\in  L^2(B)$.

\subsection{Asymptotic behavior in the plate.}\label{In the plate.}

Following Section 2 we decompose the restriction of  $u_\delta=v_\delta-I_d$ to the plate.  The Theorem \ref{Theorem 3.3.} gives ${\cal U}_\delta$, ${\cal R}_\delta$ and $\overline{u}_\delta$, then estimates \eqref{Estm}  lead to the following convergences for  a subsequence still indexed by $\delta$ 
\begin{equation}\label{4.9}\begin{aligned}
{1\over \delta^{\kappa-2}} {\cal U}_{3,\delta} &\longrightarrow   {\cal U}_3 \quad \hbox{strongly in}\quad H^1(\omega),\\
{1\over \delta^{\kappa-1}}  {\cal U}_{\alpha,\delta} &\rightharpoonup    {\cal U}_\alpha \quad \hbox{weakly in}\quad H^1(\omega),\\
{1\over \delta^{\kappa-2}} {\cal R}_{\delta} &\rightharpoonup  {\cal R} \quad \hbox{weakly in}\quad H^1(\omega;\R^3),\\
{1\over \delta^{\kappa}}\Pi _\delta(\overline{u}_\delta)& \rightharpoonup  \overline{u} \quad \hbox{weakly in}\quad L^2(\omega;H^1(-1,1;\R^3),\\
{1\over \delta^{\kappa-1}}\Bigl({\partial{\cal U}_\delta\over \partial x_\alpha}-{\cal R}_\delta\land \Ge_\alpha\Big)& \rightharpoonup {\cal Z}_\alpha\quad \hbox{weakly in}\quad L^2(\omega;\R^3),\end{aligned}\end{equation}
The boundary conditions \eqref{CLUR} give here
\begin{equation}\label{CLURLimit}{\cal U}_3=0, \quad {\cal U}_\alpha=0,\quad  {\cal R}=0\qquad\hbox{on}\quad \gamma_0,
\end{equation}
while \eqref{4.9} show that ${\cal U}_3\in H^2(\omega)$ with 
\begin{equation}\label{4.100}
{\partial{\cal U}_3\over \partial x_1}=-{\cal R}_2, \qquad{\partial{\cal U}_3\over \partial x_2}={\cal R}_1.
\end{equation}
We also have
\begin{equation}\label{4.10}\begin{aligned}
{1\over \delta^{\kappa-1}}\Pi_\delta(u_{\alpha,\delta})&\rightharpoonup{\cal U}_\alpha- X_3{\partial {\cal U }_3 \over \partial x_\alpha} \quad \hbox{weakly in}\quad H^1(\Omega),\\
{1\over \delta^{\kappa-2}}\Pi_\delta(u_{3,\delta})&\longrightarrow{\cal U}_3 \quad \hbox{strongly in}\quad H^1(\Omega)
\end{aligned}\end{equation}
which shows that the rescaled limit displacement is a Kirchhoff-Love displacement.

 In \cite{BGJE}  the  limit of the Green-St Venant's strain tensor of the sequence $v_\delta$ is also derived. Let us set
\begin{equation}\label{4.101}
\overline{u}_p=\overline{u}+{X_3\over 2}\big({\cal Z}_1 \cdot\Ge_3\big)\Ge_1+{X_3\over 2}\big({\cal Z}_2 \cdot\Ge_3\big)\Ge_2
\end{equation}  and 
\begin{equation}\label{401}
{\cal Z}_{\alpha\beta}=\left\{
\begin{aligned}
&\gamma_{\alpha\beta}({\cal U})+{1\over 2}{\partial {\cal U}_3\over \partial x_\alpha}{\partial {\cal U}_3\over \partial x_\beta},\quad \hbox{if } \kappa=3,\\
&\gamma_{\alpha\beta}({\cal U})\hskip 3cm \hbox{if } \kappa>3.
\end{aligned}\right.
\end{equation} 
Then  we have
$${1\over 2\delta^{\kappa-1}}\Pi _\delta \big((\nabla_xv_\delta)^T\nabla_x v_\delta -\GI_3\big)\rightharpoonup
\GE_p\qquad\hbox{weakly in}\quad L^1(\Omega;\R^9),$$ where the symmetric matrix $\GE_p$ is defined by
\begin{equation}
\GE_p=\begin{pmatrix}
\displaystyle  -X_3{\partial^2{\cal U}_3\over \partial x_1^2}+{\cal Z}_{11} & \displaystyle  -X_3 {\partial^2 {\cal U}_3\over \partial x_1\partial x_2}+{\cal Z}_{12}
&\displaystyle  {1\over 2}{\partial\overline{u}_{p,1} \over \partial X_3}\\
* & \displaystyle  -X_3{\partial^2 {\cal U}_3\over \partial x_2^2}+{\cal Z}_{22}  &\displaystyle  {1\over 2}{\partial\overline{u}_{p,2} \over \partial X_3}\\
* & *&  \displaystyle  {\partial\overline{u}_{p,3} \over \partial X_3}
\end{pmatrix}\end{equation}

\subsection{Asymptotic behavior in the rod.}\label{In the rod.}

Now, we decompose the restriction of  $u_\delta=v_\delta-I_d$ to the rod.  The Theorem \ref{Theorem II.2.2.} gives ${\cal W}_\delta$, ${\cal Q}_\delta$ and $\overline{w}_\delta$, then the estimates  in \eqref{EstmR}, \eqref{Q(0)-W(0)} allow to claim that 
\begin{equation}\label{EstmRod2}
\begin{aligned}
&||\overline{w}_\delta ||_{L^2(B_{\varepsilon,\delta}; \R^3)}\le C \eps^{\kappa^{'}+1},\qquad||\nabla \overline{w}_\delta ||_{ L^2(B_{\varepsilon,\delta}; \R^3)}\le C \eps^{\kappa^{'}},\\
&||{\cal Q}_\delta -{\cal Q}_\delta (0)||_{H^1(-\delta,L;\R^3)}\le C\eps^{\kappa^{'}-2},\quad  \Bigl\|{d{\cal W}_\delta\over dx_3}-{\cal Q}_\delta \land\Ge_3\Big\|_{L^2(-\delta,L;\R^3)}\le C \eps^{\kappa^{'}-1}\\
&||{\cal W}_{\delta, 3}-{\cal W}_{\delta, 3}(0)||_{H^1(-\delta,L)}\le C\eps^{\kappa^{'}-1},\\
&||{\cal W}_\delta-{\cal W}_\delta(0)-{\cal Q}_\delta(0)x_3\land\Ge_3||_{H^1(-\delta,L;\R^3)}\le  C\eps^{\kappa^{'}-2}.
\end{aligned}
\end{equation}
 Moreover from \eqref{Q(0)-W(0)}  we get 
 \begin{equation}\label{QW1}\begin{aligned}
&|{\cal W}_{\alpha,\delta}(0)|\le  C\sqrt{\delta(\delta^2+ \eps^2)}\eps^{\kappa^{'}-2},\\
&|{\cal W}_{3,\delta}(0)-\widetilde {\cal U}_{3,\delta}(0,0)|\le C\sqrt{\delta+ \eps^2}\eps^{\kappa^{'}-1},\\
&||{\cal Q}_\delta(0)||_2\le C \sqrt{\delta+\eps}\eps^{\kappa^{'}-2}.
\end{aligned}\end{equation}
 \noindent Due to the above estimates we  are in a position to prove the  following lemma:
 
\begin{lemma}\label{Lemma 6.2. } There exists a subsequence still indexed by $\delta$ such that 
\begin{equation}\label{ConvPl}
\begin{aligned}
&{1\over \eps^{\kappa^{'}-2}}{\cal W}_{\alpha,\delta}\longrightarrow   {\cal W}_\alpha \quad \hbox{strongly in}\quad  H^1(0,L),\\
&{1\over \eps^{\kappa^{'}-1}}{\cal W}_{3,\delta}\rightharpoonup  {\cal W}_3 \quad \hbox{weakly in}\quad  H^1(0,L),\\
&{1\over \eps^{\kappa^{'}-2}}{\cal Q}_{\delta}\rightharpoonup{\cal Q} \quad \hbox{weakly in}\quad H^1(0,L ; \R^3), \\
&{1\over \eps^{\kappa^{'}}}P_\eps(\overline{ {w}_{\delta}}) \rightharpoonup  \overline{w} \quad \hbox{weakly in}\quad  L^2(0,L;H^1(D;\R^3)),\\
&{1\over \eps^{\kappa^{'}-1}}\Bigl({\partial{\cal W}_{\delta,1}\over \partial x_3}-{\cal Q}_{\delta,2} \Big)\rightharpoonup {\cal Z}_1\quad \hbox{weakly in}\quad  L^2(B),\\
&{1\over \eps^{\kappa^{'}-1}}\Bigl({\partial{\cal W}_{\delta,2}\over \partial x_3}+{\cal Q}_{\delta,1} \Big)\rightharpoonup {\cal Z}_2\quad \hbox{weakly in}\quad  L^2(B).
\end{aligned}
\end{equation} We also have ${\cal W}_\alpha\in H^2(0, L)$ and 
\begin{equation}\label{VIs}
{d{\cal W}_1\over dx_3}={\cal Q}_2,\qquad {d{\cal W}_2\over dx_3}=-{\cal Q}_1.
\end{equation}
The junction conditions
\begin{equation}\label{V=WQ(0)}
 {\cal W}_\alpha(0)=0,\quad {\cal Q}(0)=0,\qquad {\cal W}_3(0)={\cal U}_3(0,0)
\end{equation}
hold true.  Setting 
\begin{equation}\label{ubarcase2}
\overline{w}_r=\overline{w}+\big[X_1{\cal Z}_1+X_2 {\cal Z}_2\big]\Ge_3
\end{equation} we have 
\begin{equation}\label{GSVcase2}
{1\over 2\eps^{\kappa^{'}-1}}P_\eps \big((\nabla v_\delta)^T\nabla v_\delta-\GI_3\big)\rightharpoonup   \GE_r+\GF\qquad\hbox{weakly in}\quad  L^1(B ; \R^{3\times 3}),
\end{equation}  where the symmetric matrices $\GE_r$ and $\GF$ are defined by  
\begin{equation}\label{HatEcase2}
\begin{aligned}
\GE_r&= \begin{pmatrix}
\ds   \gamma_{11}(\overline{w}_r) & \ds   \gamma_{12}(\overline{w}_r) & \ds -{1\over 2}X_2{d{\cal Q}_3\over dx_3}+{1\over 2}{\partial\overline{w}_{r,3}\over \partial X_1}\\  \\
* & \ds   \gamma_{22}(\overline{w}_r) & \ds {1\over 2}X_1{d{\cal Q}_3\over dx_3}+{1\over 2}{\partial\overline{w}_{r,3}\over \partial X_2}\\  \\
* & * &  \ds -X_1{d^2{\cal U}_1\over dx^2_3}-X_2{d^2{\cal U}_2\over dx^2_3}+{d{\cal U}_3\over dx_3} &
\end{pmatrix},\\
\GF&=\left\{
\begin{aligned}
&{1\over 2}\big(||{\cal Q}||^2_2\GI_3-{\cal Q}.{\cal Q}^T\big)\quad \hbox{if} \; \kappa^{'}=3,\\
&0\hskip 3.5cm \hbox{ if } \; \kappa^{'}>3.
\end{aligned}\right.
\end{aligned}
 \end{equation}
\end{lemma}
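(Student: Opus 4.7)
The plan is to normalise each quantity in \eqref{EstmRod2}--\eqref{QW1} by the scaling factor it is supposed to carry (${\cal W}_{\alpha,\delta}$ by $\eps^{\kappa^{'}-2}$, ${\cal W}_{3,\delta}$ by $\eps^{\kappa^{'}-1}$, ${\cal Q}_\delta$ by $\eps^{\kappa^{'}-2}$, $P_\eps(\overline{w}_\delta)$ by $\eps^{\kappa^{'}}$), use \eqref{QW1} to control the values at $0$, and extract weakly convergent subsequences by Banach--Alaoglu. The two weak $L^2$ convergences defining ${\cal Z}_\alpha$ come directly from the third bound in \eqref{EstmRod}. To upgrade the convergence of ${\cal W}_{\alpha,\delta}/\eps^{\kappa^{'}-2}$ to strong in $H^1$, I would observe that after rescaling $(d{\cal W}_{\delta,1}/dx_3)/\eps^{\kappa^{'}-2} - {\cal Q}_{\delta,2}/\eps^{\kappa^{'}-2}$ is of order $\eps$ in $L^2$, while ${\cal Q}_{\delta,2}/\eps^{\kappa^{'}-2}$ is compact in $L^2$ (from its $H^1$ bound). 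This both yields strong $H^1$ convergence and the relations \eqref{VIs}, whence ${\cal W}_\alpha\in H^2(0,L)$.

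For the junction conditions \eqref{V=WQ(0)}, \eqref{QW1} divided by the proper scalings gives $|{\cal W}_{\alpha,\delta}(0)|/\eps^{\kappa^{'}-2}\le C\sqrt{\delta(\delta^2+\eps^2)}\to 0$ and $\|{\cal Q}_\delta(0)\|_2/\eps^{\kappa^{'}-2}\le C\sqrt{\delta+\eps}\to 0$, so that the traces ${\cal W}_\alpha(0)=0$ and ${\cal Q}(0)=0$. For the identity ${\cal W}_3(0)={\cal U}_3(0,0)$, the key ingredient is \eqref{Lien2}: $\eps^{\kappa^{'}-1}=\delta^{\kappa-2}$. The $H^2$ bound \eqref{estVtildeH2} and the compact embedding $H^2(D(O,\rho_0))\hookrightarrow {\cal C}^0(\overline{D(O,\rho_0)})$ give strong convergence of $\widetilde{\cal U}_{3,\delta}/\delta^{\kappa-2}$ in ${\cal C}^0$; its limit coincides with ${\cal U}_3$ thanks to \eqref{V-Vtilde}. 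Hence $\widetilde{\cal U}_{3,\delta}(0,0)/\eps^{\kappa^{'}-1}\to {\cal U}_3(0,0)$, and the second line of \eqref{QW1} closes the junction.

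The rescaled Green--St Venant tensor is then obtained by writing $(\nabla v_\delta)^T\nabla v_\delta-\GI_3 = 2\operatorname{sym}(\nabla u_\delta)+(\nabla u_\delta)^T\nabla u_\delta$ and inserting $\nabla u_\delta=\nabla u^{el}_\delta+\nabla\overline{w}_\delta$. The linear part yields $\GE_r$: the elementary in-plane entries of $\nabla u^{el}_\delta$ vanish, so the in-plane $(\alpha\beta)$ blocks come entirely from $P_\eps(\nabla \overline{w}_\delta)$ via the identity $P_\eps(\partial \overline{w}_\delta/\partial x_\alpha)=\eps^{-1}\partial P_\eps(\overline{w}_\delta)/\partial X_\alpha$ (the $\partial/\partial x_3$ warping contributions being of size $\eps^{\kappa^{'}-1}/\eps^{\kappa^{'}-1}\cdot \eps\to 0$); the linear $(13)$, $(23)$ entries combine the limits ${\cal Z}_\alpha$ with $X_\beta{\cal Q}'_3$ corrections and the warping term $\frac{1}{2}\partial\overline{w}_3/\partial X_\alpha$, the ${\cal Z}_\alpha$ parts being exactly absorbed by the substitution $\overline{w}_r=\overline{w}+(X_1{\cal Z}_1+X_2{\cal Z}_2)\Ge_3$; the $(33)$ entry follows from rescaling ${\cal W}'_3+x_2{\cal Q}'_1-x_1{\cal Q}'_2$ and using \eqref{VIs}. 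In the quadratic part, the dominant contribution comes from $(\nabla u^{el}_\delta)^T\nabla u^{el}_\delta$: its entries are products of ${\cal Q}$-components and $d{\cal W}_\alpha/dx_3$, each of magnitude $\eps^{\kappa^{'}-2}$, so the whole is of order $\eps^{2(\kappa^{'}-2)}=\eps^{\kappa^{'}-1}\cdot\eps^{\kappa^{'}-3}$. After dividing by $\eps^{\kappa^{'}-1}$ the prefactor $\eps^{\kappa^{'}-3}$ kills everything if $\kappa^{'}>3$ and is $1$ when $\kappa^{'}=3$, in which case the strong convergences of ${\cal Q}_\delta/\eps^{\kappa^{'}-2}$ (in ${\cal C}^0$ by $H^1\hookrightarrow {\cal C}^0$ in 1D) and of $(d{\cal W}_{\delta,\alpha}/dx_3)/\eps^{\kappa^{'}-2}$ (in $L^2$) allow one to pass to the limit in the nine entries; using \eqref{VIs} to rewrite $|d{\cal W}_\alpha/dx_3|^2={\cal Q}_{3-\alpha}^2$ in the $(33)$ slot produces exactly $\GF=\frac{1}{2}(\|{\cal Q}\|_2^2\GI_3-{\cal Q}{\cal Q}^T)$. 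Cross terms between $\nabla u^{el}_\delta$ and $\nabla\overline{w}_\delta$, the $x_\alpha(d{\cal Q}_\delta/dx_3)$ corrections in column $3$, and $(\nabla\overline{w}_\delta)^T\nabla\overline{w}_\delta$ are each at least one power of $\eps$ smaller and vanish.

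The hardest step is this identification of $\GF$: it requires a careful scale-by-scale analysis of all nine entries of $(\nabla u^{el}_\delta)^T\nabla u^{el}_\delta$ in order to isolate the $\eps^{2(\kappa^{'}-2)}$-leading terms and confirm that every correction (the $x_\alpha(d{\cal Q}_\delta/dx_3)$ pieces in column $3$, the ${\cal W}'_3$ entries, and all cross terms with the warping) is one factor of $\eps$ smaller. It also requires strong (rather than merely weak) $L^2$ convergences of ${\cal Q}_\delta/\eps^{\kappa^{'}-2}$ and $(d{\cal W}_{\delta,\alpha}/dx_3)/\eps^{\kappa^{'}-2}$ to pass to the limit in the quadratic products, which is precisely why strong $H^1$ convergence of ${\cal W}_{\alpha,\delta}/\eps^{\kappa^{'}-2}$ had to be established at the outset via the coupling with ${\cal Q}_\delta$.
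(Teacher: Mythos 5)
Your argument follows the same skeleton as the paper: extract subsequences from the $H^1$ bounds in \eqref{EstmRod2}, read off \eqref{VIs} from the smallness of $d{\cal W}_\delta/dx_3 - {\cal Q}_\delta\wedge\Ge_3$ in combination with the compactness of ${\cal Q}_\delta/\eps^{\kappa'-2}$, get ${\cal W}_\alpha(0)={\cal Q}(0)=0$ directly from the rescaled estimates in \eqref{QW1}, and obtain ${\cal W}_3(0)={\cal U}_3(0,0)$ via the $H^2\hookrightarrow{\cal C}^0$ compactness on $D(O,\rho_0)$ for $\widetilde{\cal U}_{3,\delta}/\delta^{\kappa-2}$, the identification of its $C^0$-limit with ${\cal U}_3$ through \eqref{V-Vtilde}, and the relation $\eps^{\kappa'-1}=\delta^{\kappa-2}$. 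All of that is exactly what the paper does.

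Where you part ways is the last step. For the identification of the limit \eqref{GSVcase2}--\eqref{HatEcase2}, the paper simply cites \cite{BGRod}, whereas you re-derive the limit entry by entry from the decomposition $\nabla u_\delta=\nabla u^{el}_\delta+\nabla\overline{w}_\delta$ and the expansion $(\nabla v_\delta)^T\nabla v_\delta-\GI_3=2\,\mathrm{sym}(\nabla u_\delta)+(\nabla u_\delta)^T\nabla u_\delta$. Your computation of the quadratic term is correct: the columns of $\nabla u^{el}_\delta$ are ${\cal Q}_\delta\wedge\Ge_\alpha$ and ${\cal W}_\delta'+{\cal Q}_\delta'\wedge(x_1\Ge_1+x_2\Ge_2)$, so to leading order $(\nabla u^{el}_\delta)^T\nabla u^{el}_\delta$ scales as $\eps^{2(\kappa'-2)}$; after division by $2\eps^{\kappa'-1}$ and use of \eqref{VIs} this gives $\frac12(\|{\cal Q}\|_2^2\GI_3-{\cal Q}{\cal Q}^T)$ when $\kappa'=3$ and zero otherwise, and you correctly identify that passing to the limit in these products requires the strong convergence of ${\cal Q}_\delta/\eps^{\kappa'-2}$ in ${\cal C}^0$ and of $d{\cal W}_{\alpha,\delta}/dx_3/\eps^{\kappa'-2}$ in $L^2$ which you established at the start. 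The one point where your scale-counting is too loose is the vanishing of the cross-derivatives $\partial_3\overline{w}_{\delta,\alpha}$: from $\|\nabla\overline{w}_\delta\|_{L^2(B_{\eps,\delta})}\le C\eps^{\kappa'}$ one gets $\|\partial_{x_3}P_\eps(\overline{w}_\delta)\|_{L^2(B)}\le C\eps^{\kappa'-1}$, so $\eps^{-(\kappa'-1)}\partial_{x_3}P_\eps(\overline{w}_\delta)$ is merely bounded in $L^2(B)$, not of order $\eps$ as your parenthetical remark suggests. Its weak limit is nevertheless $0$, but the correct argument is: $\eps^{-(\kappa'-1)}P_\eps(\overline{w}_\delta)=\eps\cdot\eps^{-\kappa'}P_\eps(\overline{w}_\delta)\to 0$ strongly in $L^2(B)$, and since $\partial_{x_3}$ of this sequence is bounded in $L^2(B)$, the (subsequential) weak $L^2$ limit of $\eps^{-(\kappa'-1)}\partial_{x_3}P_\eps(\overline{w}_\delta)$ must equal the distributional $x_3$-derivative of the strong limit, hence is $0$. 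With that repair your derivation is sound and replaces the reference to \cite{BGRod} with a self-contained calculation.
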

\begin{proof}   
First, the estimates \eqref{EstmRod2} and \eqref{QW1} imply that the sequences $\ds{1\over \eps^{\kappa^{'}-2}}{\cal W}_{\alpha,\delta}$, $\ds{1\over \eps^{\kappa^{'}-1}}{\cal W}_{3,\delta}$,
$\ds{1\over \eps^{\kappa^{'}-2}}{\cal Q}_{\delta}$ are bounded in $ H^1(0,L ; \R^k)$, for $k=1$ or $k=3$. Taking into account also  \eqref{EstmRod2} and upon extracting a subsequence it follows that the convergences  \eqref{ConvPl} hold true together with \eqref{VIs}. The first  strong convergence in  \eqref{ConvPl} is in particular a consequence of \eqref{EstmRod2}.  The junction conditions on ${\cal Q}$ and  ${\cal W}_\alpha$ are immediate consequences of \eqref{QW1} and the convergences   \eqref{ConvPl}. 

In order to obtain the junction condition between the  bending in the plate and the stretching in the rod, note first that the  sequence $\ds{1\over \delta^{\kappa-2}}\widetilde{\cal U}_{\delta,3}$  converges strongly in $H^1(\omega)$ to ${\cal U}_3$ because of \eqref{V-Vtilde} and the first convergence in \eqref{4.9}. Besides this sequence is uniformly bounded in $H^2(D(O,\rho_0))$, hence it converges strongly to the same limit  ${\cal U}_3$ in $C^0(D(O,\rho_0))$.  
Moreover  the  weak convergence of the sequence $\ds{1\over \eps^{\kappa^{'}-1}}{\cal W}_{\delta,3}$  in $H^1(0, L)$, implies the convergence of $\ds{1\over \eps^{\kappa^{'}-1}}{\cal W}_{\delta,3}(0)$to ${\cal W}_{3}(0)$. Using the third  estimate in \eqref{QW1} gives the last condition in \eqref{V=WQ(0)}.

 Once the convergences \eqref{ConvPl} are established, the limit of the rescaled Green-St Venant strain tensor of the sequence $v_\delta$ is analyzed  in \cite{BGRod} and it gives \eqref{HatEcase2}.
\end{proof}
The above Lemma and the decomposition \eqref{DecR} lead to 
\begin{equation*}
\begin{aligned}
&{1\over \eps^{\kappa^{'}-2}}P_\eps( u_{\alpha,\delta})\longrightarrow   {\cal W}_\alpha \quad \hbox{strongly in}\quad  H^1(B),\\
&{1\over \eps^{\kappa^{'}-1}}P_\eps(u_{1,\delta}-{\cal W}_{1,\delta})\rightharpoonup  -X_2{\cal Q}_3 \quad \hbox{weakly in}\quad  H^1(B),\\
&{1\over \eps^{\kappa^{'}-1}}P_\eps(u_{2,\delta}-{\cal W}_{2,\delta})\rightharpoonup  X_1{\cal Q}_3 \quad \hbox{weakly in}\quad  H^1(B),\\
&{1\over \eps^{\kappa^{'}-1}}P_\eps(u_{3,\delta})\rightharpoonup  {\cal W}_3-X_1{d{\cal W}_1\over dx_3}-X_2{d{\cal W}_2\over dx_3} \quad \hbox{weakly in}\quad  H^1(B),\\
\end{aligned}
\end{equation*} 
which show that the limit rescaled displacement is a Bernoulli-Navier displacement.\\
\section{Asymptotic behavior of the sequence $\ds{m_\delta\over\delta^{2\kappa-1}}$.}

The goal of this section is to establish  Theorem \ref{theo9.1}. Let us first introduce a few notations. We set 
\begin{equation}\label{deflim2}
\begin{aligned}
\P\R_3 =\Big\{&({\cal U} ,{\cal W},{\cal Q}_3)\in H^1(\omega;\R^3)\times H^1(0,L ; \R^3) \times H^1(0,L)\;|\; \\
&{\cal U}_3\in H^2(\omega),\quad {\cal W}_\alpha\in H^2(0,L),\quad {\cal U}=0,\quad {\partial {\cal U}_3\over \partial x_\alpha}=0 \quad  \hbox{on} \enskip  \gamma_0,\\
& {\cal W}_3(0)={\cal U}_3(0,0),\qquad  {\cal W}_\alpha(0)={d{\cal W}_\alpha\over dx_3}(0)={\cal Q}_3(0)=0\Big\}
\end{aligned}\end{equation} 
We introduce below the "limit" rescaled elastic energies for the plate and the rod 
\begin{equation}\label {JP}
\begin{aligned}
 {\cal J}_p({\cal U})&= {E_p\over 3(1-\nu_p^2)}\int_\omega\Big[(1-\nu_p)\sum_{\alpha,\beta=1}^2\Big|{\partial^2{\cal U}_3\over \partial x_\alpha\partial x_\beta}\Big|^2+\nu_p\big(\Delta{\cal U}_3\big)^2\Big]\\
&+{E_p\over (1-\nu_p^2)}\int_\omega\Big[(1-\nu_p)\sum_{\alpha,\beta=1}^2\big|{\cal Z}_{\alpha\beta}\big|^2+\nu_p\big({\cal Z}_{11}+{\cal Z}_{22}\big)^2\Big],\\
{\cal J}_r({\cal W},{\cal Q}_3) &= {E_r\pi\over 8}\int_0^L\Big[ \Big|{d^2{\cal W}_1 \over dx_3^2}\Big|^2+\Big|{d^2{\cal W}_2 \over dx_3^2}\Big|^2\Big]+{E_r\pi\over 2}\Big|{d{\cal W}_3 \over dx_3}+\GF_{33}\Big|^2\\
&+{\mu_r\pi\over 8}\int_0^L\Big|{d{\cal Q}_3 \over dx_3}\Big|^2
\end{aligned}\end{equation}
where  the ${\cal Z}_{\alpha\beta}$'s are given by
\begin{equation*}
{\cal Z}_{\alpha\beta}=\left\{
\begin{aligned}
&\gamma_{\alpha\beta}({\cal U})+{1\over 2}{\partial {\cal U}_3\over \partial x_\alpha}{\partial {\cal U}_3\over \partial x_\beta},\quad \hbox{if } \kappa=3,\\
&\gamma_{\alpha\beta}({\cal U})\hskip 3cm \hbox{if } \kappa>3.
\end{aligned}\right.
\end{equation*}  and where $\GF_{33}$ is given by 
\begin{equation}
\GF_{33}=\left\{\begin{aligned}
&{1\over 2}\Big(\Big|{d{\cal W}_1 \over dx_3}\Big|^2+\Big|{d{\cal W}_2 \over dx_3}\Big|^2\Big)\enskip \hbox{if}\enskip \kappa^{'}=3,\\
&0 \hskip 4.0cm \hbox{if}\enskip \kappa^{'}>3.
\end{aligned}\right.
\end{equation}
The total energy of the plate-rod structure is given by  the functional ${\cal J}$ defined over $\P\R_3$
\begin{equation}\label {J2}
 {\cal J}_3({\cal U},{\cal W},{\cal Q}_3)= {\cal J}_p({\cal U})+{\cal J}_r({\cal W},{\cal Q}_3) -{\cal L}_3({\cal U},{\cal W},{\cal Q}_3)
\end{equation} 
with
\begin{equation}\label{FormLin}
\begin{aligned}
 {\cal L}_3({\cal U},{\cal W},{\cal Q}_3)&=2\int_{\omega} f_p\cdot {\cal U} +\pi \int_0^L f_r\cdot {\cal W}dx_3+{\pi \over 2} \int_0^L{g_\alpha}\cdot\big({\cal Q}\land\Ge_\alpha\big)dx_3
 \end{aligned}
 \end{equation}  where
 \begin{equation}\label{SR}
  {\cal Q}=- {d{\cal W}_2\over dx_3}\Ge_1+ {d{\cal W}_1\over dx_3}\Ge_2+{\cal Q}_3\Ge_3. 
\end{equation}
It is worth noting that  the functional ${\cal J}_p({\cal U})$ corresponds to the elastic energy of a Von K\'arm\'an plate model for $\kappa=3$ (see e.g. \cite{Ciarlet3}) and to the classical linear plate model for $\kappa> 3$. Similarly the functional ${\cal J}_r({\cal W},{\cal Q}_3)$ corresponds to a nonlinear rod model derived in  \cite{BGRod} for $\kappa^{'}=3$ and to the classical linear rod model for  $\kappa^{'}>3$. Let us also notice that in the space $\P\R_3$ the bending in the plate is equal to the stretching in the rod at the junction while the bending and the section-rotation  of the rod in the junction are equal to $0$ (see \eqref{SR}). \\

In the lemma  below we give sufficient conditions on the applied forces in order to insure the existence of at least a minimizer of  $ {\cal J}$ (see \cite{Ciarlet3} for a proof of the result for different boundary conditions for the displacement on $\partial \omega$). 
\begin{lemma}\label{LemExistence}  We have

\noindent $\bullet$ if $\kappa>3$ and $\kappa^{'}>3$ then the minimization problem
\begin{equation}\label{M44}
\min_{({\cal U},{\cal W},{\cal Q}_3)\in\P\R_3} {\cal J}_3({\cal U},{\cal W},{\cal Q}_3)
\end{equation}
admits an unique solution,

\noindent $\bullet$ if $\kappa=3$ and $\kappa^{'}>3$ then there exists a constant $C^{*}_l$  such that, if $(f_{p1},f_{p2})$ satisfies

 \begin{equation}\label{CstPl}
||f_{p1}||^2_{L^2(\omega)}+||f_{p2}||^2_{L^2(\omega)} <  C^*_l
\end{equation} then \eqref{M44} admits at least a solution,

\noindent $\bullet$ if $\kappa>3$ and $\kappa^{'}=3$ then there exists a constant $C^{**}_l$  such that, if $f_{r3}$ satisfies

 \begin{equation}\label{CstPt}
||f_{r3}||_{L^2(0,L)} <  C^{**}_l
\end{equation} then \eqref{M44} admits at least a solution,

\noindent $\bullet$ if $\kappa=3$ and $\kappa^{'}=3$ then  if the applied forces $(f_{p1},f_{p2})$ and $f_{r3}$  satisfy \eqref{CstPl} and \eqref{CstPt}  then \eqref{M44} admits at least a solution.
\end{lemma}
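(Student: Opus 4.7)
The plan is to apply the direct method of the calculus of variations to ${\cal J}_3$ on the affine subset $\P\R_3$, which is closed and convex inside the natural Hilbert product space $V$ whose first factor is $\{{\cal U}\in H^1(\omega;\R^3): {\cal U}_3\in H^2(\omega)\}$, second factor is $\{{\cal W}\in H^1(0,L;\R^3): {\cal W}_\alpha\in H^2(0,L)\}$ and third factor is $H^1(0,L)$. All the identities in \eqref{deflim2} are continuous linear on $V$ (they involve traces on $\gamma_0$, the point value at $(0,0)\in\omega$ via $H^2(\omega)\hookrightarrow C^0(\overline\omega)$, and traces at $x_3=0$), so $\P\R_3$ is weakly closed. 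The quadratic forms appearing in ${\cal J}_p$ and ${\cal J}_r$ are positive definite under $|\nu_p|,|\nu_r|<1$; combined with the boundary conditions in \eqref{deflim2} and Poincar\'e's inequality, the bending parts control $\|{\cal U}_3\|_{H^2(\omega)}^2$ and $\sum_\alpha\|{\cal W}_\alpha\|_{H^2(0,L)}^2$, and the torsion part controls $\|{\cal Q}_3\|_{H^1(0,L)}^2$.

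Next, I would propagate these estimates to ${\cal U}_\alpha$ and ${\cal W}_3$, which do not appear in the bending or torsion terms. The membrane term $\|{\cal Z}_{\alpha\beta}\|_{L^2(\omega)}^2$ and the rod stretching term $\|d{\cal W}_3/dx_3+\GF_{33}\|_{L^2(0,L)}^2$ are the only places where they appear. By the Sobolev embeddings $H^1(\omega)\hookrightarrow L^4(\omega)$ and $H^1(0,L)\hookrightarrow L^\infty(0,L)$, the $H^2$-bounds on ${\cal U}_3$ and ${\cal W}_\alpha$ yield $\|\partial_\alpha{\cal U}_3\partial_\beta{\cal U}_3\|_{L^2(\omega)}\le C\|{\cal U}_3\|_{H^2}^2$ and $\|\GF_{33}\|_{L^2(0,L)}\le C\sum_\alpha\|{\cal W}_\alpha\|_{H^2}^2$. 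The triangle inequality followed by Korn-Poincar\'e for ${\cal U}=0$ on $\gamma_0$ then yields $\|{\cal U}_\alpha\|_{H^1(\omega)}\le C(\|{\cal Z}\|_{L^2}+\|{\cal U}_3\|_{H^2}^2)$; Poincar\'e on $(0,L)$ combined with $|{\cal W}_3(0)|=|{\cal U}_3(0,0)|\le C\|{\cal U}_3\|_{H^2}$ yields $\|{\cal W}_3\|_{H^1(0,L)}\le C(\|d{\cal W}_3/dx_3+\GF_{33}\|_{L^2}+\sum_\alpha\|{\cal W}_\alpha\|_{H^2}^2+\|{\cal U}_3\|_{H^2})$. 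Once these estimates are in hand, Cauchy-Schwarz and Young's inequality reduce the estimate of ${\cal L}_3$ to two terms that cannot be absorbed linearly: a contribution proportional to $\|(f_{p,1},f_{p,2})\|_{L^2}\|{\cal U}_3\|_{H^2}^2$ (only when $\kappa=3$) and one proportional to $\|f_{r,3}\|_{L^2}\sum_\alpha\|{\cal W}_\alpha\|_{H^2}^2$ (only when $\kappa^{'}=3$). These terms can be absorbed by the corresponding bending energies if and only if $\|(f_{p,1},f_{p,2})\|_{L^2}$, respectively $\|f_{r,3}\|_{L^2}$, is smaller than an explicit constant $C^*_l$, respectively $C^{**}_l$, depending only on $E_p,\nu_p,\omega,\gamma_0$, respectively $E_r,L$. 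These are precisely the thresholds appearing in the lemma, and under the appropriate assumption in each of the four cases, one obtains a coercive bound ${\cal J}_3(u)\ge c\|u\|_V^2-C$ on $\P\R_3$.

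Given coercivity, I extract a minimizing sequence weakly convergent in $V$, whose weak limit lies in $\P\R_3$ by closedness of the junction constraints. The purely quadratic terms in ${\cal J}_p,{\cal J}_r$ are weakly lower semicontinuous by convexity. The nonlinear integrands actually converge weakly in $L^2$: by Rellich, weak $H^2$ convergence of ${\cal U}_3^n$ and of ${\cal W}_\alpha^n$ implies strong $W^{1,4}$ convergence (in 2D and 1D respectively), so the quadratic gradient products converge strongly in $L^2$, while weak $H^1$ convergence of ${\cal U}_\alpha^n$ and of ${\cal W}_3^n$ gives weak $L^2$ convergence of $\gamma({\cal U}^n)$ and of $d{\cal W}_3^n/dx_3$; adding them, ${\cal Z}_{\alpha\beta}^n$ and $d{\cal W}_3^n/dx_3+\GF_{33}^n$ converge weakly in $L^2$, so their squared norms are weakly lsc. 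The linear form ${\cal L}_3$ is weakly continuous. Hence the weak limit is a minimizer. When $\kappa>3$ and $\kappa^{'}>3$, ${\cal J}_3$ is the sum of a strictly convex quadratic form on $V$ and a linear form, so the minimizer is unique.

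The main obstacle is the coercivity analysis in the Von K\'arm\'an regimes: because ${\cal U}_\alpha$ (resp.\ ${\cal W}_3$) is recovered from ${\cal U}_3$ (resp.\ ${\cal W}_\alpha$) only through a quadratic nonlinearity, the in-plane plate forces $(f_{p,1},f_{p,2})$ (resp.\ the axial rod force $f_{r,3}$) do work that is quadratic in the bending amplitude, and coercivity of ${\cal J}_3$ survives only under a quantitative size restriction on those forces. Pinning down explicit values of $C^*_l,C^{**}_l$ is the delicate but standard part of the Von K\'arm\'an existence theory, and can be carried out along the lines of \cite{Ciarlet3}; the vertical plate force $f_{p,3}$ and the transverse rod forces $f_{r,\alpha}$, which couple linearly, need no smallness condition.
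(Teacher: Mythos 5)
Your proof follows essentially the same route as the paper: coercivity from the bending and torsion energies via the boundary/junction conditions together with Sobolev, Korn and Poincar\'e inequalities; identification of the two quadratic-in-bending work terms that must be absorbed (from $(f_{p,1},f_{p,2})$ when $\kappa=3$ and from $f_{r,3}$ when $\kappa^{'}=3$), hence the smallness thresholds; boundedness of a minimizing sequence, weak compactness, weak closedness of $\P\R_3$, weak lower semicontinuity, and strict convexity for uniqueness in the fully linear case. One small point in your favour: you handle the passage to the limit in the nonlinear terms cleanly, by noting that the quadratic gradient products converge strongly in $L^2$ via the compact embeddings $H^2\hookrightarrow W^{1,4}$, so that ${\cal Z}_{\alpha\beta}$ and $d{\cal W}_3/dx_3+\GF_{33}$ converge weakly and their squared $L^2$-norms are weakly lower semicontinuous; the paper loosely calls the energy ``weakly sequentially continuous'' in the auxiliary variables, but lower semicontinuity is what is actually used and what you correctly invoke.
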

\begin{proof} First, in the case  $\kappa>3$ and $\kappa^{'}>3$ the result is well known.

\noindent We prove the lemma in the case $\kappa=3$ and $\kappa^{'}=3$. The two other cases are simpler and left to the reader.

\noindent  Due to the boundary conditions on ${\cal U}_3$ in $\P\R_3$, we immediately have
\begin{equation}\label{NU3}
||{\cal U}_3||^2_{H^2(\omega)}\le C{\cal J}_p({\cal U}).
\end{equation} Then we get
\begin{equation}\label{NU4}
\begin{aligned}
\sum_{\alpha,\beta=1}^2||\gamma_{\alpha,\beta}({\cal U})||^2_{L^2(\omega)} & \le {\cal J}_p({\cal U})+ C\big\|\nabla{\cal U}_3\big\|^2_{L^4(\omega;\R^2)}\\
& \le {\cal J}_p({\cal U})+ C[{\cal J}_p({\cal U})]^2.
\end{aligned}\end{equation} Thanks to the 2D Korn's inequality we obtain
\begin{equation}\label{NU5}
||{\cal U}_1||^2_{H^1(\omega)}+||{\cal U}_2||^2_{H^1(\omega)}  \le C{\cal J}_p({\cal U})+ C_{P}[{\cal J}_p({\cal U})]^2.
\end{equation} 

\noindent  Again, due to the boundary conditions on ${\cal W}_\alpha$ and ${\cal Q}_3$ in $\P\R_3$, we immediately have
\begin{equation}\label{NU6}
||{\cal W}_1||^2_{H^2(0,L)}+||{\cal W}_2||^2_{H^2(0,L)}+||{\cal Q}_3||^2_{H^1(0,L)}\le {\cal J}_r({\cal W},{\cal Q}_3).
\end{equation} Then we get
\begin{equation}\label{NU7}
\begin{aligned}
\Big\|{d{\cal W}_3\over dx_3}\Big\|^2_{L^2(0,L)} & \le {\cal J}_r({\cal W},{\cal Q}_3)+ C\Bigl\{\Big\|{d{\cal W}_1\over dx_3}\Big\|^2_{L^4(0,L)}+\Big\|{d{\cal W}_2\over dx_3}\Big\|^2_{L^4(0,L)}\Big\}\\
& \le {\cal J}_r({\cal W},{\cal Q}_3)+ C[{\cal J}_r({\cal W},{\cal Q}_3)]^2.
\end{aligned}\end{equation} From the above inequality and\eqref{NU3} we obtain
\begin{equation}\label{NU8}
\begin{aligned}
\big\|{\cal W}_3\big\|^2_{L^2(0,L)} &\le C|{\cal W}_3(0)|^2+C\Big\|{d{\cal W}_3\over dx_3}\Big\|^2_{L^2(0,L)}\\
& \le C{\cal J}_p({\cal U})+C{\cal J}_r({\cal W},{\cal Q}_3)+  C_R[{\cal J}_r({\cal W},{\cal Q}_3)]^2.
\end{aligned}\end{equation}

Since $ {\cal J}_3(0, 0, 0)=0$, let us consider a minimizing sequence $({\cal U}^{(N)},{\cal W}^{(N)},{\cal Q}^{(N)}_3)\in \P\R_3$ satisfying ${\cal J}_3({\cal U}^{(N)},{\cal W}^{(N)},{\cal Q}^{(N)}_3)\le 0$ 
\begin{equation*}
m=\inf_{({\cal U},{\cal W},{\cal Q}_3)\in\P\R_3}{\cal J}_3({\cal U},{\cal W},{\cal Q}_3)=\lim_{N\to+\infty}{\cal J}_3({\cal U}^{(N)},{\cal W}^{(N)},{\cal Q}^{(N)}_3)
\end{equation*} where $m\in [-\infty, 0]$.

\noindent With the help of \eqref{NU3}-\eqref{NU8} we get
\begin{equation}
\begin{aligned}
&\hskip-5mm{\cal J}_p( {\cal U}^{(N)})+ {\cal J}_r({\cal W}^{(N)}, {\cal Q}^{(N)}_3)\le  C ||f_{p3}||\sqrt{{\cal J}_p({\cal U}^{(N)})}\\
&+\big(||f_{p1}||^2_{L^2(\omega)}+||f_{p2}||^2_{L^2(\omega)}\big)^{1/2}\big(C\sqrt{{\cal J}_p({\cal U}^{(N)})}+\sqrt{C_P}{\cal J}_p({\cal U}^{(N)})\big)\\
&+\sum_{\alpha=1}^2\big(||f_{r\alpha}||_{L^2(0,L)}+||g_{\alpha}||_{L^2(0,L;\R^3)}\big)\sqrt{{\cal J}_r({\cal W}^{(N)},{\cal Q}^{(N)}_3)}\\
&+||f_{r3}||_{L^2(0,L)}\big(C\sqrt{{\cal J}_r({\cal W}^{(N)},{\cal Q}^{(N)}_3)}+C\sqrt{{\cal J}_p({\cal U}^{(N)})}+\sqrt{C_R}{\cal J}_r({\cal W}^{(N)},{\cal Q}^{(N)}_3)\big)
\end{aligned}\end{equation} Choosing $\ds C^*_l={1\over C_P}$ and $\ds C^{**}_R={1\over \sqrt{C_R}}$, if the applied forces satisfy \eqref{CstPl} and \eqref{CstPt}  then the following estimates hold true
\begin{equation}
\begin{aligned}
&||{\cal U}^{(N)}_3||_{H^2(\omega)}+||{\cal U}^{(N)}_1||_{H^1(\omega)}+||{\cal U}^{(N)}_2||_{H^1(\omega)} +||{\cal W}^{(N)}_1||_{H^2(0,L)}\\
+ &||{\cal W}^{(N)}_2||_{H^2(0,L)}+||{\cal Q}^{(N)}_3||_{H^1(0,L)}+||{\cal W}^{(N)}_3||_{H^1(0,L)}\le C
\end{aligned}\end{equation} where the constant $C$ does not depend on $N$.
\vskip 1mm
As a consequence, there exists $({\cal U}^{(*)},{\cal W}^{(*)},{\cal Q}^{(*)}_3)\in \P\R_3$ such that for a subsequence 
\begin{equation*}
\begin{aligned}
{\cal U}^{(N)}_3 &\rightharpoonup {\cal U}^{(*)}_3\quad \hbox{weakly in}\enskip H^2(\omega) \;\hbox{and strongly in }\; W^{1,4}(\omega),\\
{\cal U}^{(N)}_\alpha &\rightharpoonup {\cal U}^{(*)}_\alpha\quad \hbox{weakly in}\enskip H^1(\omega),\\
{\cal W}^{(N)}_\alpha&\rightharpoonup {\cal W}^{(*)}_\alpha\quad \hbox{weakly in}\enskip H^2(0,L) \;\hbox{and strongly in }\; W^{1,4}(0,L),\\
{\cal Q}^{(N)}_3&\rightharpoonup {\cal Q}^{(*)}_3\quad \hbox{weakly in}\enskip H^1(0,L),\\
{\cal W}^{(N)}_3&\rightharpoonup {\cal W}^{(*)}_3\quad \hbox{weakly in}\enskip H^1(0,L).
\end{aligned}\end{equation*} Finally, since ${\cal J}_3$ is weakly sequentially continuous in 
$$H^2(\omega)\times H^1(\omega;\R^2) \times L^2(\Omega;\R^3)\times H^2(0,L;\R^2)\times H^1(0,L;\R^2)\times L^2(0,L)$$ with respect to 
$$({\cal U}_3, {\cal U}_1, {\cal U}_2, {\cal Z}_{11}, {\cal Z}_{12}, {\cal Z}_{22}, {\cal W}_1, {\cal W}_2, {\cal W}_3,{\cal Q}_3,  \GF_{33})$$ The above weak and strong converges imply that
\begin{equation*}
{\cal J}_3({\cal U}^{(*)},{\cal W}^{(*)},{\cal Q}^{(*)}_3)=m
=\min_{({\cal U},{\cal W},{\cal Q}_3)\in\P\R_3}{\cal J}_3({\cal U},{\cal W},{\cal Q}_3)
\end{equation*} which ends the proof of the lemma.
\end{proof}

The following theorem is the main result of the paper. It characterizes the limit of the rescaled infimum of the total energy $\ds {m_\delta\over \delta^{2\kappa-1}}={1\over \delta^{2\kappa-1}}\inf_{v\in \D_{\delta,\eps}}J_{\delta}(v)$  as the minimum of  the limit energy ${\cal J}_3$ over the space $\P\R_3$. Due to the  conditions on the fields ${\cal U},{\cal W},{\cal Q}_3$  in $\P\R_3$, this minimization problem modelizes the junction of a 2d plate model with a 1d rod model of the type "plate bending-rod stretching".

\begin {theorem}\label{theo9.1} Under the assumptions \eqref{ForceP},  \eqref{Cstar}- \eqref{Cstarstar}and \eqref{CstPl}-\eqref{CstPt} on the forces, we have 
\begin{equation}\label{res2}
\lim_{\delta\to 0}{m_{\delta}\over \delta^{2\kappa-1}}=\min_{({\cal U},{\cal W},{\cal Q}_3)\in \P\R_3 } {\cal J}_3({\cal U},{\cal W},{\cal Q}_3),
\end{equation}
where the functional ${\cal J}$ is defined by \eqref{J2}.
\end{theorem}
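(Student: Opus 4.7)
The proof proceeds via a classical two-sided bound (a Gamma-convergence style argument): I establish the liminf inequality for an arbitrary sequence of almost-minimizers and the limsup inequality by constructing an explicit recovery sequence, then conclude by the variational characterization.

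\textbf{Lower bound.} I pick a sequence $(v_\delta)\subset \D_{\delta,\eps}$ satisfying $J_\delta(v_\delta)\le m_\delta+\delta^{2\kappa-1}$. Since $J_\delta(I_d)=0$, necessarily $J_\delta(v_\delta)\le 0$, so Lemma \ref{LemCond} and the assumptions \eqref{Cstar}-\eqref{Cstarstar} give ${\cal E}(u_\delta,{\cal S}_{\delta,\eps})\le C\delta^{2\kappa-1}$. Applying the decomposition analysis of Section 6 (in particular Subsections 6.2 and 6.3, Lemma \ref{Lemma 6.2. }) I extract a subsequence so that the rescaled displacements converge to some $({\cal U},{\cal W},{\cal Q}_3,{\cal Z}_{\alpha\beta},\overline{u},\overline{w})$ with $({\cal U},{\cal W},{\cal Q}_3)\in\P\R_3$ (the junction conditions and the Kirchhoff-Love / Bernoulli-Navier structure are precisely the content of that section). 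I then rescale the elastic energy: the plate part equals $\int_\Omega Q_p\big(\tfrac{1}{\delta^{\kappa-1}}\Pi_\delta(\nabla v_\delta^T\nabla v_\delta-\GI_3)\big)dX/2$ up to $\delta^{2\kappa-1}$, and similarly on $B$ for the rod part with $Q_r$ and factor $q_\eps^2\eps^{2\kappa'-1}$. Since $Q_p,Q_r$ are positive definite quadratic forms, weak lower semicontinuity applied to the weak convergence \eqref{GSVcase2} and its analog in $\Omega$ yields
\[
\liminf_{\delta\to 0}\frac{1}{\delta^{2\kappa-1}}\int_{{\cal S}_{\delta,\eps}}\widehat{W}_\eps(\nabla v_\delta)\,dx\ge \int_\Omega Q_p(\GE_p)+\int_B Q_r(\GE_r+\GF).
\]
The integrands still depend on the free warpings $\overline{u}_p$, $\overline{w}_r$ and on the transverse shears; pointwise minimization in $X_3$-derivatives of $\overline{u}_p$ (respectively $X_1,X_2$-derivatives of $\overline{w}_r$), which amounts to the standard plane-stress/1D reduction, produces exactly the reduced densities of ${\cal J}_p$ and ${\cal J}_r$ with the Young moduli $E_p,E_r$ and Poisson ratio $\nu_p$. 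The force term passes to the limit using \eqref{4.10} and the convergences of Lemma \ref{Lemma 6.2. } (which give strong $L^2$ convergence of the rescaled displacements after division by the appropriate powers), yielding ${\cal L}_3({\cal U},{\cal W},{\cal Q}_3)$.

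\textbf{Upper bound.} For any $({\cal U},{\cal W},{\cal Q}_3)\in\P\R_3$ I must exhibit $v_\delta\in\D_{\delta,\eps}$ with $\limsup \delta^{-(2\kappa-1)} J_\delta(v_\delta)\le {\cal J}_3({\cal U},{\cal W},{\cal Q}_3)$. First I reduce to smooth data: by a density argument (with ${\cal U}_3\in {\cal C}^\infty(\overline{\omega})$, ${\cal U}_\alpha\in {\cal C}^\infty(\overline{\omega})$, ${\cal W}\in {\cal C}^\infty([0,L])$, ${\cal Q}_3\in {\cal C}^\infty([0,L])$, each vanishing to the required order on $\gamma_0$ and at $x_3=0$) I approximate a general element of $\P\R_3$. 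For such smooth data I define $v_\delta=I_d+u_\delta$ with
\[
u_\delta^{\mathrm{plate}}(x)=\delta^{\kappa-1}\big({\cal U}_1,{\cal U}_2,0\big)+\delta^{\kappa-2}\big(-x_3\partial_1{\cal U}_3,-x_3\partial_2{\cal U}_3,{\cal U}_3\big)+\delta^{\kappa}\widehat{u}(x_1,x_2,x_3/\delta),
\]
\[
u_\delta^{\mathrm{rod}}(x)=\eps^{\kappa'-2}\big({\cal W}_1,{\cal W}_2,0\big)+\eps^{\kappa'-1}\big(-x_2{\cal Q}_3,x_1{\cal Q}_3,{\cal W}_3-x_1\partial_3{\cal W}_1-x_2\partial_3{\cal W}_2\big)+\eps^{\kappa'}\widehat{w}(x_1/\eps,x_2/\eps,x_3),
\]
with optimal warping correctors $\widehat{u},\widehat{w}$ chosen to realize the pointwise minimum in the liminf step, plus, when $\kappa=3$ (resp.\ $\kappa'=3$), a quadratic Von K\'arm\'an (resp.\ rod) correction of order $\delta^{2(\kappa-2)}$ (resp.\ $\eps^{2(\kappa'-1)}$) ensuring ${\cal Z}_{\alpha\beta}$ and $\GF$ appear correctly in the Green-St Venant tensor. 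Because of \eqref{V=WQ(0)} combined with ${d{\cal W}_\alpha\over dx_3}(0)=0$ and ${\cal Q}_3(0)=0$, the leading orders of $u_\delta^{\mathrm{plate}}$ and $u_\delta^{\mathrm{rod}}$ agree at $x_3=0$ up to $o(1)$ in the scaling, and a standard cut-off correction on a layer of width $O(\eps)$ near the junction makes $v_\delta$ globally $H^1$ without affecting the energy limit (this uses the thin-layer estimates from Section 4 in reverse). A direct computation then shows
\[
\tfrac{1}{\delta^{2\kappa-1}}\int_{{\cal S}_{\delta,\eps}}\widehat{W}_\eps(\nabla v_\delta)\longrightarrow {\cal J}_p({\cal U})+{\cal J}_r({\cal W},{\cal Q}_3),\quad \tfrac{1}{\delta^{2\kappa-1}}\int f_\delta\cdot u_\delta\longrightarrow {\cal L}_3({\cal U},{\cal W},{\cal Q}_3).
\]

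\textbf{Conclusion and main obstacle.} Combining both inequalities with the existence of a minimizer provided by Lemma \ref{LemExistence} gives \eqref{res2}. The main obstacle is the construction of the recovery sequence near the junction: one needs $v_\delta$ to be globally $H^1({\cal S}_{\delta,\eps};\R^3)$, yet the plate and rod ansätze live at different scales ($\delta$ versus $\eps=\delta^{1/\theta}$), and one must verify that the cut-off correction in the cylinder $C_{\delta,\eps}$ is compatible with \eqref{DEQ} and produces only a vanishing contribution to the rescaled energy --- this is precisely where the matching conditions ${\cal W}(0)=0$, ${\cal Q}(0)=0$, ${\cal W}_3(0)={\cal U}_3(0,0)$ encoded in $\P\R_3$ become indispensable. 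A secondary technical point is the lower-semicontinuity argument in the case $\kappa=3$ (respectively $\kappa'=3$), where the nonlinear terms in ${\cal Z}_{\alpha\beta}$ (respectively $\GF_{33}$) require the strong convergences $\delta^{-(\kappa-2)}\nabla\Pi_\delta({\cal U}_{3,\delta})\to \nabla{\cal U}_3$ in $L^4$ (respectively $\eps^{-(\kappa'-2)}d{\cal W}_{\alpha,\delta}/dx_3\to d{\cal W}_\alpha/dx_3$ in $L^4$), obtained by Sobolev embedding from the $H^2$-bounds in \eqref{4.9} and \eqref{ConvPl}.
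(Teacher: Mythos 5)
Your overall architecture matches the paper: a two-sided $\Gamma$-convergence style argument, with the liminf step resting on the decomposition convergences (Sections 5--6), weak lower semicontinuity of the positive quadratic forms $Q_p,Q_r$ (valid after upgrading the $L^1$ convergences of the rescaled Green--St~Venant tensors to $L^2$ via \eqref{6.55}), pointwise minimization over the warpings, and the limsup step resting on an explicit recovery sequence. The observations about the role of the conditions in $\P\R_3$ at the junction, the need for strong convergence to pass the nonlinear terms to the limit when $\kappa=3$ or $\kappa^{'}=3$, and the existence of a minimizer from Lemma \ref{LemExistence} are all correct.

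Two points deviate from the paper's actual proof and deserve comment. First, the paper's recovery sequence does \emph{not} use a cut-off layer near the junction. Instead it first approximates the minimizer by a sequence $({\cal U}^{(n)},{\cal W}^{(n)},{\cal Q}^{(n)}_3,\overline{u}^{(n)},\overline{w}^{(n)})$ in which ${\cal W}^{(n)}_\alpha$, ${\cal Q}^{(n)}_3$ vanish and ${\cal W}^{(n)}_3\equiv{\cal U}^{(n)}_3(0,0)$ on $[-1/n,1/n]$, with the warpings vanishing on a cylinder of radius (or length) $1/n$ near the junction; then the explicit plate and rod ans\"atze \eqref{testnP}--\eqref{testnR} \emph{coincide identically} on $C_{\delta,\eps}$ once $\delta\le 1/n$, and one lets $n\to\infty$ after $\delta\to 0$. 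Your cut-off approach is plausible --- the estimates resemble those of Section 4 and the conditions ${\cal W}_\alpha(0)={d{\cal W}_\alpha\over dx_3}(0)={\cal Q}_3(0)=0$ do make the jump across the layer small enough --- but it requires a separate verification that the cut-off contributes $o(\eps^{\kappa^{'}})$ to $\|\hbox{dist}(\nabla v_\delta,SO(3))\|_{L^2(B_{\eps,\delta})}$ and $o(\delta^{\kappa-1/2})$ to the plate, whereas the paper sidesteps this entirely.

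Second, the mention of adding \emph{a separate} ``quadratic Von K\'arm\'an (resp.\ rod) correction of order $\delta^{2(\kappa-2)}$ (resp.\ $\eps^{2(\kappa^{'}-1)}$)'' to the displacement ansatz is a misconception: no such correction appears in the paper's test functions \eqref{testnP}--\eqref{testnR}, and none is needed. The quadratic terms ${\cal Z}_{\alpha\beta}$ (when $\kappa=3$) and $\GF_{33}$ (when $\kappa^{'}=3$) emerge automatically from the quadratic part $\nabla u^T\nabla u$ of the Green--St~Venant tensor $\nabla v^T\nabla v-\GI_3$: with $u_3\sim\delta^{\kappa-2}{\cal U}_3$ one has $\partial_\alpha u_3\partial_\beta u_3\sim\delta^{2(\kappa-2)}\partial_\alpha{\cal U}_3\partial_\beta{\cal U}_3$, which equals $\delta^{\kappa-1}$ exactly when $\kappa=3$ and so slots into ${\cal Z}^{(n)}_{\alpha\beta}$ without any additional term in the ansatz; the rod case is analogous. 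Adding an extra quadratic term to $u_\delta$ would in fact perturb the leading-order strain. This does not affect the validity of the overall strategy, but it is a concrete gap in the recovery-sequence computation as you wrote it.
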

\begin{proof}
{\it Step 1.} In this step we show that 
\begin{equation}\label{step1}
\min_{({\cal U},{\cal W},{\cal Q}_3)\in \P\R_3 }  {\cal J}_3({\cal U},{\cal W},{\cal Q}_3)\le \liminf_{\delta\to0}{m_\delta\over \delta^{2\kappa-1}}.
\end{equation}

Let   $(v_\delta)_\delta$  be  a sequence of deformations belonging to $\D_{\delta,\eps}$ and such that
\begin{equation}\label{Hypvdelta2}
\lim_{\delta\to 0}{J_{\delta}(v_\delta)\over \delta^{2\kappa-1}}=\liminf_{\delta\to0}{m_\delta\over \delta^{2\kappa-1}}.
\end{equation} One can always assume that $J_{\delta}(v_\delta)\le 0$ without loss of generality. From the analysis of the previous section and, in particular from  estimates \eqref{EstDist} the sequence $v_\delta$ satisfies
 \begin{equation}
 \begin{aligned}\label{6.54}
&{\cal E}(u_\delta,{\cal S}_{\delta,\eps})\le C\delta^{2\kappa-1}=C q_\eps^2\eps^{2\kappa^{'}},\qquad  ||\hbox{dist}(\nabla  v_\delta , SO(3))||_{L^2(\Omega_{\delta})}\le C\delta^{\kappa-1/2}\\
&||\hbox{dist}(\nabla  v_\delta , SO(3))||_{L^2(B_{\eps,\delta})}\le C\eps^{\kappa^{'}}.\end{aligned}\end{equation} 
Estimates \eqref{EstGSV} give
 \begin{equation}\label{6.55}
 \big\|\nabla v^T_\delta\nabla v_\delta-\GI_3\big\|_{L^2(\Omega_{\delta} ; \R^{3\times 3})}\le  C\delta^{\kappa-1/2},\quad  \big\|\nabla v^T_\delta\nabla v_\delta-\GI_3\big\|_{L^2(B_{\eps,\delta} ; \R^{3\times 3})}\le  C\eps^{\kappa^{'}}.
 \end{equation} 

 Firstly, for any fixed $\delta$, the displacement  $u_\delta=v_\delta-I_d$, restricted to $\Omega_\delta$, is decomposed as in Theorem \ref{Theorem 3.3.}.  Due to the second estimate in   \eqref{6.54}, we can apply the results of Subsection \ref{In the plate.} to the sequence $(v_\delta)$. As a consequence  there exist a subsequence (still indexed by $\delta$) and ${\cal U}^{(0)},\;{\cal R}^{(0)}\in H^1(\omega;\R^3)$,  such that the convergences \eqref{4.9} and \eqref{4.10} hold true. Due to \eqref{CLURLimit} and \eqref{4.100} the field ${\cal U}_3$ belongs to $H^2(\omega)$, and we have the boundary conditions 
\begin{equation}\label{CLU0}{\cal U}^{(0)}=0,\quad \nabla U^{(0)}_3=0,\qquad\hbox{on}\quad \gamma_0,
\end{equation}

Subsection  \ref{In the plate.} also  shows that there exits $\overline{u}_p^{(0)}\in L^2(\omega;H^1(-1,1;\R^3))$ such that 
 \begin{equation}\label{E0}
 {1\over 2\delta^{\kappa-1}}\big(\nabla v_\delta^T\nabla v_\delta-\GI_3\big)\rightharpoonup \GE^{(0)}_p\quad \hbox{weakly in }\enskip L^1(\Omega;\R^9)
 \end{equation}
 where $\GE^{(0)}_p$ is defined 
 \begin{equation}\label{E00}
\GE^{(0)}_p=\begin{pmatrix}
\displaystyle  -X_3{\partial^2{\cal U}^{(0)}_3\over \partial x_1^2}+{\cal Z}^{(0)}_{11} & \displaystyle  -X_3 {\partial^2 {\cal U}^{(0)}_3\over \partial x_1\partial x_2}+{\cal Z}^{(0)}_{12}
&\displaystyle  {1\over 2}{\partial\overline{u}^{(0)}_{p,1} \over \partial X_3}\\
* & \displaystyle  -X_3{\partial^2 {\cal U}^{(0)}_3\over \partial x_2^2}+{\cal Z}^{(0)}_{22}  &\displaystyle  {1\over 2}{\partial\overline{u}^{(0)}_{p,2} \over \partial X_3}\\
* & *&  \displaystyle  {\partial\overline{u}^{(0)}_{p,3} \over \partial X_3}
\end{pmatrix}\end{equation}
with  
\begin{equation}\label{Z0}
{\cal Z}^{(0)}_{\alpha\beta}=\left\{
\begin{aligned}
&\gamma_{\alpha\beta}({\cal U}^{(0)})+{1\over 2}{\partial {\cal U}^{(0)}_3\over \partial x_\alpha}{\partial {\cal U}^{(0)}_3\over \partial x_\beta},\quad \hbox{if } \kappa=3,\\
&\gamma_{\alpha\beta}({\cal U}^{(0)})\hskip 3cm \hbox{if } \kappa>3.
\end{aligned}\right.
\end{equation} Moreover thanks to the first estimate in \eqref{6.55}, the weak convergence \eqref{E0} actually occurs in $L^2(\Omega;\R^9)$.

Secondly, still for $\delta$  fixed,  the displacement  $u_\delta=v_\delta-I_d$, restricted to $B_{\eps,\delta}$, is decomposed as in Theorem \ref{Theorem 3.3.}.  Again due  to the third estimate in   \eqref{6.55}, we can apply the results of Subsection \ref{In the rod.} to the sequence $(v_\delta)$. As a consequence  there exist a subsequence (still indexed by $\delta$) and ${\cal W}^{(0)},\;{\cal Q}^{(0)}_3\in H^1(0,L;\R^3)$,  such that the convergences \eqref{ConvPl}. As a consequence of  \eqref{VIs} the fields ${\cal W}^{(0)}$ belongs to $H^2(0,L)$ and we have
$${d{\cal W}^{(0)}\over dx_3}={\cal Q}^{(0)}_3\land\Ge_3.$$
The junction conditions \eqref{V=WQ(0)} and \eqref{V=WQ(0)} give
\begin{equation}\label{CJUQ0}
{\cal Q}^{(0)}(0)=0,\quad {\cal W}^{(0)}_\alpha(0)=0,\qquad {\cal W}^{(0)}_3(0)={\cal U}^{(0)}_3(0,0).\end{equation}
\vskip 1mm
\noindent The triplet $({\cal U}^{(0)}, {\cal W}^{(0)},{\cal Q}^{(0)}_3)$ belongs to $\P\R_3$.

 Subsection  \ref{In the rod.} also  shows that there exits $\overline{w}_r^{(0)}\in L^2(0,L;H^1(D;\R^3))$ such that 
 \begin{equation}\label{612}
{1\over 2\eps^{\kappa^{'}-1}}P_\eps \big((\nabla v_\delta)^T\nabla v_\delta-\GI_3\big)\rightharpoonup   \GE^{(0)}_r\qquad\hbox{weakly in}\quad  L^1(B ; \R^{3\times 3}),
\end{equation}  where the symmetric matrices $\GE^{(0)}_r$ is defined by  
\begin{equation}\label{613}
\GE^{(0)}_r= \begin{pmatrix}
\ds   \gamma_{11}(\overline{w}_r^{(0)}) & \ds   \gamma_{12}(\overline{w}_r^{(0)}) & \ds -{1\over 2}X_2{d{\cal Q}^{(0)}_3\over dx_3}+{1\over 2}{\partial\overline{w}^{(0)}_{r,3}\over \partial X_1}\\  \\
* & \ds   \gamma_{22}(\overline{w}_r^{(0)}) & \ds {1\over 2}X_1{d{\cal Q}^{(0)}_3\over dx_3}+{1\over 2}{\partial\overline{w}^{(0)}_{r,3}\over \partial X_2}\\  \\
* & * &  \ds -X_1{d^2{\cal U}^{(0)}_1\over dx^2_3}-X_2{d^2{\cal U}^{(0)}_2\over dx^2_3}+{d{\cal U}^{(0)}_3\over dx_3} &
\end{pmatrix}+\GF^{(0)},
\end{equation}
\begin{equation}\label{F0}
\begin{aligned}
\GF^{(0)}&=\left\{
\begin{aligned}
&{1\over 2}\big(||{\cal Q}^{(0)}||^2_2\GI_3-{\cal Q}^{(0)}\big({\cal Q}^{(0)}\big)^T\big)\quad \hbox{if} \; \kappa^{'}=3,\\
&0\hskip 5cm \hbox{ if } \; \kappa^{'}>3,
\end{aligned}\right.\\
\hbox{where}\;\; {\cal Q}^{(0)}&=\ds- {d{\cal W}^{(0)}_2\over dx_3}\Ge_1+{d{\cal W}^{(0)}_1\over dx_3}\Ge_2+{\cal Q}^{(0)}_3\Ge_3.
\end{aligned} \end{equation} 
Moreover thanks to the second estimate in \eqref{6.55}, the weak convergence \eqref{612} actually occurs in $L^2(B;\R^9)$. 

\noindent First of all, we have
\begin{equation*}
\begin{aligned}
{1\over \delta^{2\kappa-1}} \int_{{\cal S}_{\delta,\eps}}\widehat{W}_\eps\big(\nabla v_\delta\big)&= {1\over \delta^{2\kappa-1}} \int_{{\Omega}_{\delta}}\widehat{W}_\eps\big(\nabla v_\delta\big)+{1\over q^2_\eps\eps^{2\kappa^{'}}} \int_{B_{\eps,\delta}\setminus C_{\delta,\eps}}\widehat{W}_\eps\big(\nabla v_\delta\big)\\
= \int_{\Omega}Q_p\Big(\Pi_\delta\Big[{1\over \delta^{\kappa-1}}&\big((\nabla v_\delta)^T\nabla v_\delta-\GI_3\Big]\Big)+ \int_{B} Q_r\Big( \chi_{B\setminus D\times ]0,\delta[}P_\eps\Big[{1\over \eps^{\kappa^{'}-1}}\big((\nabla v_\delta)^T\nabla v_\delta-\GI_3\Big]\Big)\\
\end{aligned}
\end{equation*} From the weak convergences  of the Green-St Venant's tensors in \eqref{E0} and \eqref{612} (recall that these convergences hold true in $L^2$) and the limit of the term involving the forces \eqref{limitforces} we obtain 
\begin{equation}\label{6.100}
\liminf_{\delta \to 0}{J_\delta(v_\delta)\over \delta^{2\kappa-1}}\ge\int_{\Omega}Q\big(\GE^{(0)}_p\big)+ \int_{B} Q\big(\GE^{(0)}_r\big)- \lim_{\delta\to 0}{1\over \delta^{2\kappa-1}}\int_{{\cal S}_{\delta,\eps}} f_{\delta}\cdot(v_\delta -I_d)
\end{equation} where $\GE^{(0)}_p$ and $\GE^{(0)}_r$ are given by \eqref{E00} and \eqref{613}.
In order to derive the last limit in \eqref{6.100} we use the assumptions on the forces \eqref{ForceP} and the convergences \eqref{4.9} and \eqref{ConvPl} and this leads to 
\begin{equation}\label{limitforces}
 \lim_{\delta\to 0}{1\over \delta^{2\kappa-1}}\int_{{\cal S}_{\delta,\eps}} f_{\delta}\cdot(v_\delta -I_d)= {\cal L}_3({\cal U}^{(0)},{\cal W}^{(0)},{\cal Q}^{(0)}_3)
 \end{equation}   where
$ {\cal L}_3({\cal U},{\cal W},{\cal Q}_3)$ is given by \eqref{FormLin} for any triplet in $\P\R_3$. 
From \eqref{6.100} and \eqref{limitforces}, we obtain 
\begin{equation}\label{6.101}
\liminf_{\delta \to 0}{J_\delta(v_\delta)\over \delta^{2\kappa-1}}\ge\int_{\Omega}Q\big(\GE^{(0)}_p\big)+ \int_{B} Q\big(\GE^{(0)}_r\big)-{\cal L}_3({\cal U}^{(0)},{\cal W}^{(0)},{\cal Q}^{(0)}_3).
\end{equation}
The next step in the derivation of the limit energy consists in minimizing $\int_{-1}^1Q_p\big(\GE^{(0)}_p\big)dX_3$ (resp. $\int_D Q_r\big(\GE^{(0)}_r\big)dX_1dX_2$) with respect to $\overline{u}^{(0)}_p$( resp. $\overline{w}_r^{(0)}$).

First the expressions of $Q_p$ and of $\GE^{(0)}_p$ under a few calculations show that 
\begin{equation}\label{605}
\begin{aligned}
\int_{-1}^1Q_p\big(\GE^{(0)}_p\big)dX_3\ge &{E_p\over 3(1-\nu_p^2)}\Big[(1-\nu_p)\sum_{\alpha,\beta=1}^2\Big|{\partial^2{\cal U}^{(0)}_3\over \partial x_\alpha\partial x_\beta}\Big|^2+\nu_p\big(\Delta{\cal U}^{(0)}_3\big)^2\Big]\\
&+{E_p\over (1-\nu_p^2)}\Big[(1-\nu_p)\sum_{\alpha,\beta=1}^2\big|{\cal Z}^{(0)}_{\alpha\beta}\big|^2+\nu_p\big({\cal Z}^{(0)}_{11}+{\cal Z}^{(0)}_{22}\big)^2\Big]
\end{aligned}\end{equation}
the expression in the right hand side of \eqref{605} is obtained through replacing $\overline{u}^{(0)}_p$ by 
\begin{equation}\label{barup}
\overline{\overline{u}}^{(0)}_p(\cdot ,\cdot ,X_3)={\nu_p\over 1-\nu_p}\Big[\Big({X_3^2\over 2}-{1\over 6}\Big)\Delta {\cal U}^{(0)}_3-X_3\big({\cal Z}^{(0)}_{11}+{\cal Z}^{(0)}_{22}\big)\Big]\Ge_3.
\end{equation}
Then the expressions of  $Q_r$ and of $\GE^{(0)}_r$ permit to obtain
\begin{equation}\label{606}
\begin{aligned}
\int_D Q_r\big(\GE^{(0)}_r\big)dX_1dX_2\ge &{E_r\pi\over 8}\Big[ \Big|{d^2{\cal W}^{(0)}_1 \over dx_3^2}\Big|^2+\Big|{d^2{\cal W}^{(0)}_2 \over dx_3^2}\Big|^2\Big]+{E_r\pi\over 2}\Big|{d{\cal W}^{(0)}_3 \over dx_3}+\GF^{(0)}_{33}\Big|^2\\
&+{\mu_r\pi\over 8}\Big|{d{\cal Q}^{(0)}_3 \over dx_3}\Big|^2
\end{aligned}\end{equation}
and similarly  the expression in the right hand side of \eqref{606} is derived through replacing $\overline{w}_r^{(0)}$ by 
\begin{equation}\label{barur}\begin{aligned}
\overline{\overline{w}}^{(0)}_{r,1}&=-\nu_r\Big[{X_2^2-X_1^2 \over 2}{d^2{\cal W}^{(0)}_1 \over dx_3^2}-X_1X_2{d^2{\cal W}^{(0)}_2 \over dx_3^2}+X_1\Big({d{\cal W}^{(0)}_3 \over dx_3}+\GF^{(0)}_{33}\Big)\Big]-X_1\GF^{(0)}_{11}-{X_2\over 2}\GF^{(0)}_{12}\\
\overline{\overline{w}}^{(0)}_{r,2}&=-\nu_r\Big[{X_1^2-X_2^2 \over 2}{d^2{\cal W}^{(0)}_2 \over dx_3^2}-X_1X_2{d^2{\cal W}^{(0)}_1 \over dx_3^2}+X_2\Big({d{\cal W}^{(0)}_3 \over dx_3}+\GF^{(0)}_{33}\Big)\Big]-{X_1\over 2}\GF^{(0)}_{12}-X_2\GF^{(0)}_{22}\\
\overline{\overline{w}}^{(0)}_{r,3}&=-X_1\GF^{(0)}_{13} -X_2\GF^{(0)}_{23}.
\end{aligned}\end{equation}
In view of \eqref{6.101}, \eqref{605} and \eqref{606}, the proof of\eqref{step1} is achieved.
\vskip 2mm
\noindent {\it Step  2.} Under the assumptions \eqref{CstPl}-\eqref{CstPt}, we know that there exists  $({\cal U}^{(1)},{\cal W}^{(1)},{\cal Q}^{(1)}_3)\in \P\R_3$ such that 
$$ \min_{({\cal U},{\cal W},{\cal Q}_3)\in \P\R_3}{\cal J}_3({\cal U},{\cal W},{\cal Q}_3)={\cal J}_3({\cal U}^{(1)},{\cal W}^{(1)},{\cal Q}^{(1)}_3).$$ Now, in this step we show that 
$$ \limsup_{\delta\to0}{m_\delta\over \delta^{2\kappa-1}}\le {\cal J}_3({\cal U}^{(1)},{\cal W}^{(1)},{\cal Q}^{(1)}_3).$$

Let $\overline{\overline{u}}^{(1)}_p$ be in $L^2(\omega;H^1(-1,1;\R^3))$ obtained through replacing ${\cal U}^{(0)}$ by ${\cal U}^{(1)}$ in \eqref{Z0}-\eqref{barup}  and $\overline{\overline{w}}^{(1)}_r$ be in $L^2(0,L;H^1(D;\R^3))$ obtained through replacing ${\cal W}^{(0)}$ and ${\cal Q}^{(0)}_3$ by ${\cal U}^{(1)}$ and ${\cal Q}^{(0)}_3$ in \eqref{F0}- \eqref{barur}.

\noindent We now consider a sequence $\big({\cal U}^{(n)},{\cal W}^{(n)},{\cal Q}^{(n)}_3, \overline{u}^{(n)}, \overline{w}^{(n)}\big)_{n\ge 2}$ such that

$\bullet$  ${\cal U}^{(n)}_\alpha \in W^{2,\infty}(\omega)\cap H^1_{\gamma_0}(\omega)$ and 
$${\cal U}^{(n)}_\alpha \longrightarrow {\cal U}^{(1)}_\alpha \hbox{ strongly in } H^1(\omega),$$

$\bullet$ ${\cal U}^{(n)}_3 \in W^{3,\infty}(\omega)\cap H^2_{\gamma_0}(\omega)$ and 
$${\cal U}^{(n)}_3 \longrightarrow {\cal U}^{(1)}_3 \hbox{ strongly in } H^2(\omega),$$

$\bullet$  ${\cal W}^{(n)}_\alpha \in W^{3,\infty}(-1/n,L)$ with ${\cal W}^{(n)}_\alpha =0$ in $[-1/n,1/n]$  and
$${\cal W}^{(n)}_\alpha \longrightarrow {\cal W}^{(1)}_\alpha \hbox{ strongly in } H^2(0,L),$$

$\bullet$  ${\cal W}^{(n)}_3 \in W^{2,\infty}(-1/n,L)$ with ${\cal W}^{(n)}_3 ={\cal U}^{(n)}_3(0,0)$ in $[-1/n,1/n]$  and
$${\cal W}^{(n)}_3 \longrightarrow {\cal W}^{(1)}_3 \hbox{ strongly in } H^1(0,L),$$

$\bullet$  ${\cal Q}^{(n)}_3 \in W^{2,\infty}(-1/n,L)$ with ${\cal Q}^{(n)}_3 =0$ in $[-1/n,1/n]$  and
$${\cal Q}^{(n)}_3 \longrightarrow {\cal Q}^{(1)}_3 \hbox{ strongly in } H^1(0,L),$$

$\bullet$  $\overline{u}^{(n)} \in W^{1,\infty}(\Omega;\R^3)$ with $\overline{u}^{(n)}=0$ on $\partial\omega\times ]-1,1[$, $\overline{u}^{(n)}=0$ in the cylinder $D(O, 1/n)\times ]-1,1[$  and
$$\overline{u}^{(n)} \longrightarrow  \overline{\overline{u}}^{(1)}_p \hbox{ strongly in } L^2(\omega ; H^1(-1,1;\R^3)),$$

$\bullet$  $\overline{w}^{(n)} \in W^{1,\infty}(]-1/n,L[\times D;\R^3)$ with $\overline{w}^{(n)}=0$  in the cylinder $D\times ]-1/n,1/n[$  and
$$\overline{w}^{(n)} \longrightarrow \overline{\overline{w}}^{(1)}_r \hbox{ strongly in } L^2(0,L ; H^1(D;\R^3)).$$

First, the above strong convergences and the expression of ${\cal J}$ show that 
\begin{equation}\label{stepn}
\lim_{n\to +\infty} {\cal J}_3({\cal U}^{(n)},{\cal W}^{(n)},{\cal Q}^{(n)}_3)={\cal J}_3({\cal U}^{(1)},{\cal W}^{(1)},{\cal Q}^{(1)}_3).
\end{equation}

For $n$ fixed, let us consider the following sequence $(v_\delta)$ of deformations of the whole structure ${\cal S}_{\delta,\eps}$, defined below:

$\bullet$ in $\Omega_\delta$ we set
\begin{equation}\label{testnP}
\begin{aligned}
v_{\delta,1}(x)&=x_1+\delta^{\kappa-1}\big({\cal U}^{(n)}_1(x_1,x_2)-{x_3\over \delta}{\partial {\cal U}^{(n)}_3\over \partial x_1}(x_1,x_2)+\delta\overline{u}^{(n)}_{1}(x_1,x_2,{x_3\over \delta}\big)\big),\\
v_{\delta,2}(x)&=x_2+\delta^{\kappa-1}\big({\cal U}^{(n)}_2(x_1,x_2)-{x_3\over \delta}{\partial {\cal U}^{(n)}_3\over \partial x_2}(x_1,x_2)+\delta\overline{u}^{(n)}_{2}(x_1,x_2,{x_3\over \delta}\big)\big),\\
v_{\delta,3}(x)&=x_3+\delta^{\kappa-2}\big({\cal U}^{(n)}_3(x_1,x_2)+\delta^2\overline{u}^{(n)}_{3}(x_1,x_2,{x_3\over \delta}\big)\big).
\end{aligned}
\end{equation} 

$\bullet$ in $B_{\eps,\delta}$ we set
\begin{equation}\label{testnR}
\begin{aligned}
v_{\delta,1}(x)&=x_1+\delta^{\kappa-1}\big({\cal U}^{(n)}_1(x_1,x_2)-{x_3\over \delta}{\partial {\cal U}^{(n)}_3\over \partial x_1}(x_1,x_2)\big)+\eps^{\kappa^{'}-2}\big({\cal W}^{(n)}_1(x_3)\\
&\enskip -x_2 {\cal Q}^{(n)}_3(x_3)+\eps^2\overline{w}^{(n)}_{1}\big({x_1\over \eps},{x_2\over \eps},x_3\big)\big),\\
v_{\delta,2}(x)&=x_2+\delta^{\kappa-1}\big({\cal U}^{(n)}_2(x_1,x_2)-{x_3\over \delta}{\partial {\cal U}^{(n)}_3\over \partial x_2}(x_1,x_2)\big)+\eps^{\kappa^{'}-2}\big({\cal W}^{(n)}_2(x_3)\\
&\enskip +x_1  {\cal Q}^{(n)}_3(x_3)+\eps^2\overline{w}^{(n)}_{2}\big({x_1\over \eps},{x_2\over \eps},x_3\big)\big),\\
v_{\delta,3}(x)&=x_3+\delta^{\kappa-2}{\cal U}^{(n)}_3(x_1,x_2)+\eps^{\kappa^{'}-1}\big(\big[{\cal W}^{(n)}_3(x_3)-{\cal U}^{(n)}_3(0,0)\big]-{x_1\over \eps}{d {\cal W}^{(n)}_1\over dx_3}(x_3)\\
&\enskip -{x_2\over \eps}{d {\cal W}^{(n)}_2\over dx_3}(x_3)+\eps\overline{w}^{(n)}_{3}\big({x_1\over \eps},{x_2\over \eps},x_3\big)\big).
\end{aligned}
\end{equation} Obviously, if $\delta$ is small enough (in order to have $\delta\le 1/n$) the two expressions of $v_\delta$ 
match in  the cylinder $C_{\delta,\eps}$ and are equal to 
\begin{equation}\label{testnCyl}
\begin{aligned}
v_{\delta,1}(x)&=x_1+\delta^{\kappa-1}\big({\cal U}^{(n)}_1(x_1,x_2)-{x_3\over \delta}{\partial {\cal U}^{(n)}_3\over \partial x_1}(x_1,x_2)\big),\\
v_{\delta,2}(x)&=x_2+\delta^{\kappa-1}\big({\cal U}^{(n)}_2(x_1,x_2)-{x_3\over \delta}{\partial {\cal U}^{(n)}_3\over \partial x_2}(x_1,x_2)\big),\\
v_{\delta,3}(x)&=x_3+\delta^{\kappa-2} {\cal U}^{(n)}_3(x_1,x_2).
\end{aligned}
\end{equation} 
By construction the deformation $v_\delta$ belongs to $\D_{\delta,\eps}$. Then we have
\begin{equation}\label{630}
m_\delta\le J_\delta(v_\delta).
\end{equation}

In the expression \eqref{testnP} of the displacement  $v_\delta-I_d$ the explicit dependence with respect to $\delta$ permits to derive directly the limit of the Green-St Venant's strain tensor as $\delta$ tends to 0 ($n$ being fixed)
\begin{equation}\label{631}
{1\over 2\delta^{\kappa-1}}\Pi _\delta \big((\nabla_xv_\delta)^T\nabla_x v_\delta -\GI_3\big)\longrightarrow
\GE^{(n)}_p\qquad\hbox{strongly in}\quad L^\infty(\Omega;\R^9),
\end{equation} where the symmetric matrix $\GE^{(n)}_p$ is defined by
\begin{equation*}
\GE^{(n)}_p=\begin{pmatrix}
\displaystyle  -X_3{\partial^2{\cal U}^{(n)}_3\over \partial x_1^2}+{\cal Z}^{(n)}_{11} & \displaystyle  -X_3 {\partial^2 {\cal U}^{(n)}_3\over \partial x_1\partial x_2}+{\cal Z}^{(n)}_{12}
&\displaystyle  {1\over 2}{\partial\overline{u}^{(n)}_{1} \over \partial X_3}\\
* & \displaystyle  -X_3{\partial^2 {\cal U}^{(n)}_3\over \partial x_2^2}+{\cal Z}^{(n)}_{22}  &\displaystyle  {1\over 2}{\partial\overline{u}^{(n)}_{2} \over \partial X_3}\\
* & *&  \displaystyle  {\partial\overline{u}^{(n)}_{3} \over \partial X_3}
\end{pmatrix}\end{equation*} 

\noindent Now, in the rod  $B_{\eps,\delta}$ we have
\begin{equation}\label{testnR2}
\begin{aligned}
v_{\delta,1}(x)&=x_1+\eps^{\kappa^{'}-2}\Big[{\cal W}^{(n)}_1(x_3)+\delta\eps{\cal U}^{(n)}_1(0,0)- \eps x_3 {\partial {\cal U}^{(n)}_3\over \partial x_1}(0,0)\\
&\qquad\quad    -x_2 {\cal Q}^{(n)}_3(x_3)\Big]+\widetilde{w}^{(n)}_{\eps,1}(x),\\
v_{\delta,2}(x)&=x_2+\eps^{\kappa^{'}-2}\Big[{\cal W}^{(n)}_2(x_3)+\delta\eps{\cal U}^{(n)}_2(0,0) -\eps x_3{\partial {\cal U}^{(n)}_3\over \partial x_2}(0,0)\\
&\qquad\quad  +x_1 {\cal Q}^{(n)}_3(x_3)\Big]+\widetilde{w}^{(n)}_{\eps,2}(x),\\
v_{\delta,3}(x)&=x_3+\eps^{\kappa^{'}-1}\big[ {\cal W}^{(n)}_3(x_3) -{x_1\over \eps}{d {\cal W}^{(n)}_1\over dx_3}(x_3)+x_1 {\partial {\cal U}^{(n)}_3\over \partial x_1}(0,0)\\
&\qquad\quad -{x_2\over \eps}{d {\cal W}^{(n)}_2\over dx_3}(x_3)+x_2{\partial {\cal U}^{(n)}_3\over \partial x_2}(0,0)\Big]+\widetilde{w}^{(n)}_{\eps,3}(x).
\end{aligned}
\end{equation} where
\begin{equation*}
\begin{aligned}
\widetilde{w}^{(n)}_{\eps,1}(x)&=\eps^{\kappa^{'}}\overline{w}^{(n)}_{1}\big({x_1\over \eps},{x_2\over \eps},x_3\big)+\delta\eps^{\kappa^{'}-1}({\cal U}^{(n)}_1(x_1,x_2)-{\cal U}^{(n)}_1(0,0))\\
&\qquad - x_3\eps^{\kappa^{'}-1} \Big({\partial {\cal U}^{(n)}_3\over \partial x_1}(x_1,x_2)- {\partial {\cal U}^{(n)}_3\over \partial x_1}(0,0)\Big),\\
\widetilde{w}^{(n)}_{\eps,2}(x)&=\eps^{\kappa^{'}}\overline{w}^{(n)}_{2}\big({x_1\over \eps},{x_2\over \eps},x_3\big)+\delta\eps^{\kappa^{'}-1}({\cal U}^{(n)}_2(x_1,x_2)-{\cal U}^{(n)}_2(0,0))\\
&\qquad - x_3\eps^{\kappa^{'}-1} \Big({\partial {\cal U}^{(n)}_3\over \partial x_2}(x_1,x_2)- {\partial {\cal U}^{(n)}_3\over \partial x_2}(0,0)\Big),\\
\widetilde{w}^{(n)}_{\eps,3}(x)&=\eps^{\kappa^{'}}\overline{w}^{(n)}_{3}\big({x_1\over \eps},{x_2\over \eps},x_3\big)+\eps^{\kappa^{'}-1}\Big({\cal U}^{(n)}_3(x_1,x_2)-{\cal U}^{(n)}_3(0,0)\\
&\qquad -x_1{\partial {\cal U}^{(n)}_3\over \partial x_1}(0,0)-x_2 {\partial {\cal U}^{(n)}_3\over \partial x_2}(0,0)\Big)
\end{aligned}\end{equation*} First notice that
\begin{equation}\label{TwLim}
\begin{aligned}
{1\over \eps^{\kappa^{'}}}P_\eps(&\widetilde{w}^{(n)}_{\eps})  \longrightarrow  \overline{w}^{(n)}_r= \overline{w}^{(n)}-x_3 \Big[X_1{\partial^2 {\cal U}^{(n)}_3\over \partial x_1^2}(0,0)+X_2 {\partial^2 {\cal U}^{(n)}_3\over \partial x_1\partial x_2}(0,0)\Big]\Ge_1\\
&-x_3 \Big[X_1{\partial^2 {\cal U}^{(n)}_3\over \partial x_1\partial x_2}(0,0)+X_2 {\partial^2 {\cal U}^{(n)}_3\over \partial x_2^2}(0,0)\Big]\Ge_2\quad \hbox{strongly in }\; W^{1,\infty}(B;\R^3).
\end{aligned}\end{equation} 
As above,  the expression \eqref{testnR2} of the displacement  $v_\delta-I_d$ being  explicit   with respect to $\delta$ and $\eps$, a   direct calculation gives
\begin{equation}\label{633}
{1\over 2\eps^{\kappa^{'}-1}}P_\eps \big((\nabla v_\delta)^T\nabla v_\delta-\GI_3\big)\longrightarrow \GE^{(n)}_r\qquad\hbox{strongly in}\quad  L^\infty(B ; \R^{3\times 3}),
\end{equation}  where the symmetric matrices $\GE^{(n)}_r$ and $\GF^{(n)}$ are defined by  
\begin{equation}\label{634}
\begin{aligned}
\GE^{(n)}_r&= \begin{pmatrix}
\ds   \gamma_{11}(\overline{w}^{(n)}_r) & \ds   \gamma_{12}(\overline{w}^{(n)}_r) & \ds -{1\over 2}X_2{d{\cal Q}^{(n)}_3\over dx_3}+{1\over 2}{\partial\overline{w}^{(n)}_{r,3}\over \partial X_1}\\  \\
* & \ds   \gamma_{22}(\overline{w}^{(n)}_r) & \ds {1\over 2}X_1{d{\cal Q}^{(n)}_3\over dx_3}+{1\over 2}{\partial\overline{w}^{(n)}_{r,3}\over \partial X_2}\\  \\
* & * &  \ds -X_1{d^2{\cal U}^{(n)}_1\over dx^2_3}-X_2{d^2{\cal U}^{(n)}_2\over dx^2_3}+{d{\cal U}^{(n)}_3\over dx_3} &
\end{pmatrix}+\GF^{(n)},\\
\GF^{(n)}&=\left\{
\begin{aligned}
&{1\over 2}\big(||{\cal Q}^{(n)}||^2_2\GI_3-{\cal Q}^{(n)}.\big({\cal Q}^{(n)}\big)^T\big)\quad \hbox{if} \; \kappa^{'}=3,\\
&0\hskip 3.5cm \hbox{ if } \; \kappa^{'}>3.
\end{aligned}\right.
\end{aligned}
 \end{equation}
 From the strong convergences \eqref{631}-\eqref{633} and taking to account the expressions of the applied forces \eqref{ForceP} and the ones of the deformation, we get
 \begin{equation*}
 \begin{aligned}
 \lim_{\delta\to 0}{1\over \delta^{2\kappa-1}}\int_{{\cal S}_{\delta,\eps}}\widehat{W}_\eps(\nabla  v_\delta)(x)dx&=\int_{\Omega}Q\big(\GE^{(n)}_p\big)+ \int_{B} Q\big(\GE^{(n)}_r\big)\\
 \lim_{\delta\to 0}{1\over \delta^{2\kappa-1}}\int_{{\cal S}_{\delta,\eps}} f_{\delta}\cdot(v_\delta -I_d)&= {\cal L}_3({\cal U}^{(n)},{\cal W}^{(n)},{\cal Q}^{(n)}_3).
\end{aligned} \end{equation*}  Then, from the above limits and \eqref{630} we finally get
 \begin{equation}\label{LimStep2}
 \limsup_{\delta\to0}{m_\delta\over \delta^{2\kappa-1}}\le \int_{\Omega}Q\big(\GE^{(n)}_p\big)+ \int_{B} Q\big(\GE^{(n)}_r\big)-{\cal L}_3({\cal U}^{(n)},{\cal W}^{(n)},{\cal Q}^{(n)}_3).
 \end{equation} Now, $n$ goes to infinity, the above inequality and \eqref{stepn} give
  \begin{equation}\label{LimStep2}
 \limsup_{\delta\to0}{m_\delta\over \delta^{2\kappa-1}}\le {\cal J}_3({\cal U}^{(1)},{\cal W}^{(1)},{\cal Q}^{(1)}_3).
 \end{equation}
This conclude the proof of the theorem.
\end{proof}
\begin{remark} Let us point out that Theorem \ref{theo9.1} shows that for any minimizing sequence $(v_\delta)_\delta$ as in Step 1, the third convergence of the rescaled Green-St Venant's strain tensor in \eqref{E0} is a strong convergence in $L^2(\Omega;\R^{3\times 3})$ and the convergence \eqref{612} is a strong convergence in $L^2(B;\R^{3\times 3})$.
\end{remark}

\section{ Appendix}

\begin{proof}[Proof of Lemma \eqref{lemAp}.] The first  estimate  \eqref{GsPlaque8} is proved in Lemma 4.3 of  \cite{BGJE}).  Now we carry on by estimating $\GG_s( u , B_{\varepsilon,\delta}) $. 
\vskip 1mm
\noindent{\it Step 1.  In this step we prove the following inequality:
\begin{equation}\label{80}
\begin{aligned}
\GG_s( u , B_{\varepsilon,\delta}) \le  &C||\hbox{dist}(\nabla v , SO(3))||_{L^2(B_{\eps,\delta})}\\
 &\qquad +C{||\hbox{dist}(\nabla v , SO(3))||^2_{L^2(B_{\eps,\delta})}\over \eps^3}+ C\eps|||\GQ(0)-\GI_3|||^2.
\end{aligned}\end{equation} }

\noindent The restriction of the displacement $u=v-I_d$ to the rod $B_{\eps,\delta}$ is decomposed as (see Theorem II.2.2 of \cite{BGRod})
\begin{equation}\label{DecRNL8}
u(x)={\cal W} (x_3)+(\GQ(x_3)-\GI_3)\big(x_1\Ge_1+x_2\Ge_2\big)+\overline{w}^{'}(x),\qquad x\in B_{\varepsilon,\delta},
 \end{equation}
where we have ${\cal W}\in H^1(-\delta,L;\R^3) $,  $\GQ\in H^1(-\delta,L; SO(3))$ and $\overline{w}^{'}\in H^1(B_{\varepsilon,\delta};\R^3)$. 
This displacement is also decomposed as in \eqref{DecR}. In both decompositions the field ${\cal W}$ is the average of $u$ on the cross-sections of the rod.
\vskip 1mm
 We know (see Theorem II.2.2  established in \cite{BGRod}) that the fields ${\cal W}$, $\GQ$ and $\overline{w}^{'}$ satisfy
\begin{equation}\label{EstmRod8}
\begin{aligned}
&||\overline{w}^{'}||_{L^2(B_{\varepsilon,\delta};\R^3)}\le C\varepsilon||\hbox{dist}(\nabla v , SO(3))||_{L^2(B_{\eps,\delta})},\\
&||\nabla\overline{w}^{'}||_{L^2(B_{\varepsilon,\delta};\R^{3\times 3})}\le C ||\hbox{dist}(\nabla v , SO(3))||_{L^2(B_{\eps,\delta})}\\
&\Bigl\|{d\GQ\over dx_3}\Big\|_{L^2(-\delta, L ;\R^{3})}\le {C\over\varepsilon^2} ||\hbox{dist}(\nabla v , SO(3))||_{L^2(B_{\eps,\delta})}\\
& \Bigl\|{d{\cal W}\over dx_3}-(\GQ-\GI_3)\Ge_3\Big\|_{L^2(-\delta,L;\R^3)}\le {C\over \varepsilon}||\hbox{dist}(\nabla v , SO(3))||_{L^2(B_{\eps,\delta})}\\
& \bigl\|\nabla v-\GQ\big\|_{L^2(B_{\varepsilon,\delta};\R^{3\times 3})}\le C||\hbox{dist}(\nabla v , SO(3))||_{L^2(B_{\eps,\delta})}
\end{aligned}
\end{equation}
 where the constant $C$ does not depend on $\varepsilon$, $\delta$ and $L$.
 
We set $\Gv=\GQ(0)^T v$  and $\Gu=\Gv-I_d$. The deformation $\Gv$ belongs to $H^1(B_{\eps,\delta};\R^3)$ and satisfies
$$||\hbox{dist}(\nabla \Gv , SO(3))||_{L^2(B_{\eps,\delta})}=||\hbox{dist}(\nabla v , SO(3))||_{L^2(B_{\eps,\delta})}.$$
The last estimate in \eqref{EstmRod8} leads to
\begin{equation}\label{8a}
\begin{aligned}
 \bigl\|\nabla \Gu+(\nabla \Gu)^T\big\|_{L^2(B_{\varepsilon,\delta};\R^{3\times 3})}\le & C||\hbox{dist}(\nabla v , SO(3))||_{L^2(B_{\eps,\delta})}\\
& +C\eps||\GQ(0)^T\GQ +\GQ^T\GQ(0)-2\GI_3||_{L^2(-\delta,L;\R^9)}
\end{aligned} \end{equation} First, we observe that for any matrices $\GR\in SO(3)$ we get $|||\GR-\GI_3|||^2=\sqrt 2|||\GR+\GR^T-2\GI_3|||$. Hence, we have $\sqrt 2|||\GQ(0)^T\GQ +\GQ^T\GQ(0)-2\GI_3|||=|||\GQ-\GQ(0)|||^2$ and using again \eqref{EstmRod8} we obtain
\begin{equation*}
||\GQ(0)^T\GQ +\GQ^T\GQ(0)-2\GI_3||_{L^2(-\delta,L;\R^9)} \le C{||\hbox{dist}(\nabla v , SO(3))||^2_{L^2(B_{\eps,\delta})}\over \eps^4}
\end{equation*} which implies with \eqref{8a}
\begin{equation}\label{8a1}
\GG_s( \Gu , B_{\varepsilon,\delta})\le  C||\hbox{dist}(\nabla v , SO(3))||_{L^2(B_{\eps,\delta})}+C{||\hbox{dist}(\nabla v , SO(3))||^2_{L^2(B_{\eps,\delta})}\over \eps^3}. \end{equation}
Observing that $\nabla u+(\nabla u)^T=\nabla \Gu+(\nabla \Gu)^T+\big(\GI_3-\GQ(0)\big)^T\big(\nabla u-(\GQ(0)-\GI_3)\big)+\big(\nabla u-(\GQ(0)-\GI_3)\big)^T\big(\GI_3-\GQ(0)\big)+2(\GQ(0)+\GQ(0)^T-2\GI_3)$, we deduce that
\begin{equation*}
\begin{aligned}
\GG_s(u , B_{\varepsilon,\delta})& \le  \GG_s( \Gu , B_{\varepsilon,\delta})+2|||\GQ(0)-\GI_3||| \big\|\nabla u-(\GQ(0)-\GI_3)\big\|_{L^2(B_{\varepsilon,\delta};\R^{3\times 3})}\\
&+C\eps|||\GQ(0)+\GQ(0)^T-2\GI_3|||\\
 & \le  \GG_s( \Gu , B_{\varepsilon,\delta}) + C|||\GQ(0)-\GI_3||| {||\hbox{dist}(\nabla v , SO(3))||_{L^2(B_{\eps,\delta})}\over \eps}\\
 &+C\eps|||\GQ(0)-\GI_3|||^2\\
 & \le  \GG_s( \Gu , B_{\varepsilon,\delta}) + C{||\hbox{dist}(\nabla v , SO(3))||^2_{L^2(B_{\eps,\delta})}\over \eps^3}+C\eps|||\GQ(0)-\GI_3|||^2
\end{aligned}\end{equation*} Thanks to \eqref{8a1} we obtain \eqref{80}.
\vskip 1mm
\noindent Now we carry on by giving two estimates on $|||\GQ(0)-\GI_3|||^2$.
\vskip 1mm
\noindent{\it Step 2. First estimate on  $|||\GQ(0)-\GI_3|||^2$.}
\vskip 1mm
We deal with the restriction of $v$ to the plate. Due to Theorem 3.3  established in \cite{BGJE}, the displacement  $u=v-I_d$ is decomposed as 
\begin{equation}\label{FDec8}
u(x)={\cal V} (x_1,x_2)+ x_3(\GR(x_1,x_2)-\GI_3) \Ge_3+\overline{v} (x),\qquad x\in \Omega_\delta
\end{equation}
where ${\cal V} $ belongs to $ H^1(\omega; \R^3)$, $\GR$  belongs to $H^1(\omega; \R^{3\times 3})$ and $\overline{v} $ belongs to $  H^1(\Omega_\delta; \R^3)$ and we have the following estimates
\begin{equation}\label{8.7}
\begin{aligned}
&||\overline{v} ||_{L^2(\Omega_\delta; \R^3)}\le C\delta ||dist(\nabla  v,SO(3))||_{L^2(\Omega_\delta)}\\
&||\nabla \overline{v} ||_{ L^2(\Omega_\delta; \R^9)}\le C ||dist(\nabla  v,SO(3))||_{L^2(\Omega_\delta)}\\
&\Bigl\|{\partial \GR\over \partial x_\alpha}\Big\|_{ L^2(\omega; \R^9)}\le {C\over \delta^{3/2}} ||dist(\nabla  v,SO(3))||_{L^2(\Omega_\delta)}\\
& \Bigl\|{\partial{\cal V}\over \partial x_\alpha}-(\GR-\GI_3) \Ge_\alpha\Big\|_{ L^2(\omega; \R^3)}\le {C\over \delta^{1/2}}||dist(\nabla  v,SO(3))||_{L^2(\Omega_\delta)}\\
& \bigl\|\nabla  v-\GR \big\|_{ L^2(\Omega_\delta; \R^9)}\le C||dist(\nabla  v,SO(3))||_{L^2(\Omega_\delta)}
\end{aligned}
\end{equation} where the constant $C$ does not depend on $\delta$. The following boundary conditions are satisfied
\begin{equation}\label{CLUR8}
{\cal V}=0,\quad  \GR=\GI_3\qquad \hbox{on } \enskip \gamma_0,\qquad \overline{v}=0\qquad \hbox{on}\quad \Gamma_{0,\delta}.
\end{equation}
\vskip 1mm
The last estimates in \eqref{EstmRod8} and \eqref{8.7} allow to compare $\GQ-\GI_3$ and $\GR-\GI_3$ in the cylinder $C_{\delta, \eps}$. We obtain
\begin{equation*}
\begin{aligned}
\eps^2 ||\GQ-\GI_3||^2_{L^2(-\delta,\delta; \R^9)}\le C\big\{||dist(\nabla v, SO(3))||^2_{L^2(\Omega_\delta)}+ ||\hbox{dist}(\nabla v , SO(3))||^2_{L^2(B_{\eps,\delta})}\big\}\\
+C \delta||\GR-\GI_3||^2_{L^2(D_\eps;\R^9)}
\end{aligned}\end{equation*} Besides,   the  third estimate in \eqref{8.7} and the boundary condition on $\GR$ lead to
\begin{equation}\label{R80}
\begin{aligned}
&||\GR-\GI_3||^2_{L^2(D_\eps;\R^9)}\le C\eps^{3/2}||\GR-\GI_3||^2_{L^8(D_\eps;\R^9)}\\
\le &C\eps^{3/2}||\GR-\GI_3||^2_{H^1(D_\eps;\R^9)}\le C\eps^{3/2}{||dist(\nabla  v,SO(3))||^2_{L^2(\Omega_\delta)}\over \delta^3}.
\end{aligned}\end{equation} 
Then, we get
\begin{equation}\label{820}
\begin{aligned}
\eps^2 ||\GQ-\GI_3||^2_{L^2(-\delta,\delta; \R^9)}\le C\big\{||dist(\nabla v, SO(3))||^2_{L^2(\Omega_\delta)}+ ||\hbox{dist}(\nabla v , SO(3))||^2_{L^2(B_{\eps,\delta})}\big\}\\
+C\eps^{3/2}{||dist(\nabla v, SO(3))||^2_{L^2(\Omega_\delta)}\over \delta^{2}}.
\end{aligned}\end{equation}
\noindent Furthermore, the third estimate in \eqref{EstmRod8} gives
\begin{equation*}
\begin{aligned}
|||\GQ(0)-\GI_3|||^2\le {C\over \delta}||\GQ-\GI_3||^2_{L^2(-\delta,\delta; \R^9)}+C\delta\Big\|{d\GQ\over dx_3}\Big\|^2_{L^2(B_{\eps,\delta};\R^9)}\\
\le  {C\over \delta}||\GQ-\GI_3||^2_{L^2(-\delta,\delta; \R^9)}+C{\delta\over \eps^4}||\hbox{dist}(\nabla v , SO(3))||^2_{L^2(B_{\eps,\delta})}
\end{aligned}\end{equation*}
which using \eqref{820} yields
\begin{equation*}
\begin{aligned}
\eps  |||\GQ(0)-\GI_3|||^2\le C\Big[{\delta^{2}\over \eps}+\eps^{1/2} \Big]{||dist(\nabla v, SO(3))||^2_{L^2(\Omega_\delta)}\over  \delta^{3}}\\
+C\Big[\delta+{\eps^2\over \delta}\Big]{ ||\hbox{dist}(\nabla v , SO(3))||^2_{L^2(B_{\eps,\delta})}\over \eps^3}
\end{aligned}\end{equation*} Finally \eqref{80} and the above estimate lead to  
\begin{equation}\label{GGs}
\begin{aligned}
\GG_s( u, B_{\eps,\delta}) \le  C||\hbox{dist}(\nabla v , SO(3))||_{L^2(B_{\eps,\delta})}  & +C\Big[1+{\eps^2\over \delta}\Big]{||\hbox{dist}(\nabla v , SO(3))||^2_{L^2(B_{\eps,\delta})}\over \eps^3}\\
 &\hskip-1cm  + C\big[\delta^{2}+\eps\big]{||dist(\nabla v, SO(3))||^2_{L^2(\Omega_\delta)}\over \eps\delta^{3}}.
\end{aligned}\end{equation}
\vskip 1mm
\noindent{\it Step 3. Second estimate on  $|||\GQ(0)-\GI_3|||^2$.}
\vskip 1mm
Now, we consider the traces of the two decompositions \eqref{DecRNL8} and \eqref{FDec8}
 of  the displacement $u=v-I_d$ on $D_\eps\times\{0\}$. From \eqref{EstmRod8} and \eqref{8.7}  we have
\begin{equation*}
\begin{aligned}
\int_{D_\eps}||u(x_1,x_2,0)-{\cal W}(0)&-(\GQ(0)-\GI_3)(0)  (x_1\Ge_1+x_2\Ge_2)||_2^2\\
&=\int_{D_\eps}||\overline{w}^{'}(x_1,x_2,0)||_2^2\le C\eps||\hbox{dist}(\nabla v , SO(3))||^2_{L^2(B_{\eps,\delta})},\\
\int_{D_\eps}||u(x_1,x_2,0)-{\cal V}(x_1,x_2)||_2^2&=\int_{D_\eps}||\overline{v}(x_1,x_2,0)||_2^2\le C\delta ||\hbox{dist}(\nabla v , SO(3))||^2_{L^2(\Omega_\delta)}.
\end{aligned}\end{equation*} The above estimates  lead to
\begin{equation*}\begin{aligned}
&\int_{D_\eps}||{\cal W}(0)+(\GQ(0)-\GI_3)(x_1\Ge_1+x_2\Ge_2)-{\cal V}(x_1,x_2)||_2^2\\
\le & C\delta ||\hbox{dist}(\nabla v , SO(3))||^2_{L^2(\Omega_\delta)}+C\eps||\hbox{dist}(\nabla v , SO(3))||^2_{L^2(B_{\eps,\delta})}
\end{aligned}\end{equation*}  which implies
\begin{equation}\label{322}
\begin{aligned}
\int_{D_\eps}||(\GQ(0)-\GI_3)(x_1\Ge_1+x_2\Ge_2)-\big({\cal V}(x_1,x_2)-{\cal M}_{D_\eps}({\cal V})\big)||_2^2\\
\le C\delta ||\hbox{dist}(\nabla v , SO(3))||^2_{L^2(\Omega_\delta)}+C\eps||\hbox{dist}(\nabla v , SO(3))||^2_{L^2(B_{\eps,\delta})}.
\end{aligned}\end{equation} 
We carry on by estimating  ${\cal V}-{\cal M}_{D_{\eps}}\big( {\cal V}\big)$.  Let us set
\begin{equation*}
{\GR}_\alpha={\cal M}_{D_\eps}\big((\GR-\GI_3)\Ge_\alpha\big)={1\over |D_\eps|}\int_{D_\eps }(\GR(x_1,x_2)-\GI_3) \Ge_\alpha dx_1dx_2
\end{equation*} and we consider the function $\Phi(x_1,x_2)= {\cal V}(x_1,x_2)-{\cal M}_{D_{\eps}}\big( {\cal V}\big)-x_1\GR_1-x_2\GR_2$. Due to the fourth  estimate in  \eqref{8.7} and the Poincar-Wirtinger's inequality (in order to estimate $||(\GR-\GI_3)\Ge_\alpha-\GR_\alpha||_{L^2(D_\eps; \R^3)}$) we obtain 
\begin{equation}\label{NablaPsi8}
||\nabla\Phi||^2_{L^2(D_\eps ; \R^2)}\le C \Big({1\over \delta}+{\eps^2\over \delta^3}\Big)||\hbox{dist}(\nabla v , SO(3))||^2_{L^2(\Omega_\delta)},\\
\end{equation} Noting that ${\cal M}_{D_\eps}(\Psi)=0$,  the above inequality and the Poincar-Wirtinger's inequality in the disc $D_\eps $ lead to
\begin{equation}\label{Psi8}
||\Phi||^2_{L^2(D_\eps)}\le C{\eps^2\over\delta}\Big(1+{\eps^2\over \delta^2}\Big)||\hbox{dist}(\nabla v , SO(3))||^2_{L^2(\Omega_\delta)}.
\end{equation} 
 Estimates  \eqref{322}  gives
\begin{equation*}
\begin{aligned}
&\int_{D_\eps} ||(\GQ(0)-\GI_3)(x_1\Ge_1+x_2\Ge_2)||^2_2 \le  C\big( ||\Phi||^2_{L^2(D_\eps)}\\
&\quad +\eps^4||{\GR}_1||^2_2+\eps^4||{\GR}_2||^2_2+\delta ||\hbox{dist}(\nabla v , SO(3))||^2_{L^2(\Omega_\delta)}+\eps||\hbox{dist}(\nabla v , SO(3))||^2_{L^2(B_{\eps,\delta})}\big)
\end{aligned}\end{equation*} which in turns with \eqref{R80} and  \eqref{Psi8}  yield
\begin{equation*}
\begin{aligned}
&\eps^4\big( ||(\GQ(0)-\GI_3)\Ge_1||^2_2+ ||(\GQ(0)-\GI_3)\Ge_2||^2_2\big)\\
\le & C\Big({\eps^2\over\delta}+{\eps^{7/2}\over \delta^3}+\delta\Big)||\hbox{dist}(\nabla v , SO(3))||^2_{L^2(\Omega_\delta)}+C\eps||\hbox{dist}(\nabla v , SO(3))||^2_{L^2(B_{\eps,\delta})}
\end{aligned}\end{equation*} and finally
\begin{equation}\label{326}
\begin{aligned}
&\eps|||\GQ(0)-\GI_3|||^2\\
\le & C\Big({\delta^2\over\eps^2}+{1\over \eps^{1/2}}+{\delta^4\over \eps^3}\Big){||\hbox{dist}(\nabla v , SO(3))||^2_{L^2(\Omega_\delta)}\over \delta^{3}}+C\eps{||\hbox{dist}(\nabla v , SO(3))||^2_{L^2(B_{\eps,\delta})}\over \eps^3}.
\end{aligned}\end{equation} Estimates \eqref{80} and \eqref{326} yield 
\begin{equation}\label{GGs1}
\begin{aligned}
\GG_s( u, B_{\eps,\delta}) \le  C||\hbox{dist}(\nabla v , SO(3))||_{L^2(B_{\eps,\delta})}  & +C{||\hbox{dist}(\nabla v , SO(3))||^2_{L^2(B_{\eps,\delta})}\over \eps^3}\\
 &\hskip-2cm  + C\Big[\eps^{1/2}+{\delta^2\over \eps}+{\delta^4\over \eps^2}\Big]{||dist(\nabla v, SO(3))||^2_{L^2(\Omega_\delta)}\over \eps\delta^{3}}.
\end{aligned}\end{equation}  
\vskip 1mm
\noindent{\it Step 4. Final estimate on $\GG_s( u, B_{\eps,\delta})$.}
\vskip 1mm
\noindent The two estimates of $\GG_s( u, B_{\eps,\delta}) $ given by \eqref{GGs} and \eqref{GGs1} lead to
\vskip 1mm
\noindent $\bullet$ if $\eps^2\le \delta$ then 
\begin{equation*}
\begin{aligned}
\GG_s( u, B_{\eps,\delta}) \le  C||\hbox{dist}(\nabla v , SO(3))||_{L^2(B_{\eps,\delta})}  & +C{||\hbox{dist}(\nabla v , SO(3))||^2_{L^2(B_{\eps,\delta})}\over \eps^3}\\
 &\hskip-1cm  + C\big[\delta+\eps\big]{||dist(\nabla v, SO(3))||^2_{L^2(\Omega_\delta)}\over \eps\delta^{3}}.
\end{aligned}\end{equation*} $\bullet$  if $\delta\le \eps^2$ then 
\begin{equation*}
\begin{aligned}
\GG_s( u, B_{\eps,\delta}) \le  C||\hbox{dist}(\nabla v , SO(3))||_{L^2(B_{\eps,\delta})}  & +C{||\hbox{dist}(\nabla v , SO(3))||^2_{L^2(B_{\eps,\delta})}\over \eps^3}\\
 &\hskip-0.3cm  + C\eps^{1/2} {||dist(\nabla v, SO(3))||^2_{L^2(\Omega_\delta)}\over \eps\delta^{3}}.
\end{aligned}\end{equation*}   We immediately deduce \eqref{GGs11}.
\end{proof}

\end{document}